\documentclass[11pt,a4paper]{article}

\usepackage[utf8]{inputenc}
\PassOptionsToPackage{hyphens}{url}
\usepackage{amsmath,amssymb,amsthm}
\usepackage{mathtools}
\usepackage{graphicx}
\usepackage{booktabs}
\usepackage{array}
\usepackage[hypertexnames=false,hidelinks,pdfauthor={Kyle Elliott Mathewson},pdftitle={Moments of Magic Squares: Line-Sum Conditions, a Permutation Threshold, and Random Energies}]{hyperref}
\usepackage{geometry}
\usepackage{float}

\makeatletter
\def\Cref@getprefix#1:#2\relax{#1}
\def\Cref@prefix@sec{sec}
\def\Cref@prefix@fig{fig}
\def\Cref@prefix@tab{tab}
\def\Cref@prefix@eq{eq}
\def\Cref@prefix@thm{thm}
\def\Cref@prefix@prop{prop}
\def\Cref@prefix@lem{lem}
\def\Cref@prefix@cor{cor}
\def\Cref@prefix@conj{conj}
\def\Cref@prefix@def{def}
\def\Cref@prefix@rem{rem}
\def\Cref@prefix@ex{ex}

\def\Cref@type@cap#1{%
  \edef\Cref@tmp{#1}%
  \ifx\Cref@tmp\Cref@prefix@sec Section%
  \else\ifx\Cref@tmp\Cref@prefix@fig Figure%
  \else\ifx\Cref@tmp\Cref@prefix@tab Table%
  \else\ifx\Cref@tmp\Cref@prefix@eq Equation%
  \else\ifx\Cref@tmp\Cref@prefix@thm Theorem%
  \else\ifx\Cref@tmp\Cref@prefix@prop Proposition%
  \else\ifx\Cref@tmp\Cref@prefix@lem Lemma%
  \else\ifx\Cref@tmp\Cref@prefix@cor Corollary%
  \else\ifx\Cref@tmp\Cref@prefix@conj Conjecture%
  \else\ifx\Cref@tmp\Cref@prefix@def Definition%
  \else\ifx\Cref@tmp\Cref@prefix@rem Remark%
  \else\ifx\Cref@tmp\Cref@prefix@ex Example%
  \else Ref.%
  \fi\fi\fi\fi\fi\fi\fi\fi\fi\fi\fi\fi
}

\def\Cref@type@low#1{%
  \edef\Cref@tmp{#1}%
  \ifx\Cref@tmp\Cref@prefix@sec section%
  \else\ifx\Cref@tmp\Cref@prefix@fig figure%
  \else\ifx\Cref@tmp\Cref@prefix@tab table%
  \else\ifx\Cref@tmp\Cref@prefix@eq equation%
  \else\ifx\Cref@tmp\Cref@prefix@thm theorem%
  \else\ifx\Cref@tmp\Cref@prefix@prop proposition%
  \else\ifx\Cref@tmp\Cref@prefix@lem lemma%
  \else\ifx\Cref@tmp\Cref@prefix@cor corollary%
  \else\ifx\Cref@tmp\Cref@prefix@conj conjecture%
  \else\ifx\Cref@tmp\Cref@prefix@def definition%
  \else\ifx\Cref@tmp\Cref@prefix@rem remark%
  \else\ifx\Cref@tmp\Cref@prefix@ex example%
  \else ref.%
  \fi\fi\fi\fi\fi\fi\fi\fi\fi\fi\fi\fi
}

\DeclareRobustCommand{\Cref}[1]{%
  \begingroup
  \edef\Cref@label{#1}%
  \edef\Cref@prefix{\expandafter\Cref@getprefix\Cref@label:\relax}%
  \hyperref[#1]{\Cref@type@cap{\Cref@prefix}~\ref*{#1}}%
  \endgroup
}
\DeclareRobustCommand{\cref}[1]{%
  \begingroup
  \edef\Cref@label{#1}%
  \edef\Cref@prefix{\expandafter\Cref@getprefix\Cref@label:\relax}%
  \hyperref[#1]{\Cref@type@low{\Cref@prefix}~\ref*{#1}}%
  \endgroup
}
\makeatother

\makeatletter
\long\def\@makecaption#1#2{\vskip\abovecaptionskip\small\sbox\@tempboxa{#1: #2}\ifdim\wd\@tempboxa>\hsize #1: #2\par\else\global\@minipagefalse\hb@xt@\hsize{\hfil\box\@tempboxa\hfil}\fi\vskip\belowcaptionskip}
\makeatother

\theoremstyle{plain}
\newtheorem{theorem}{Theorem}[section]
\newtheorem{lemma}[theorem]{Lemma}
\newtheorem{proposition}[theorem]{Proposition}
\newtheorem{corollary}[theorem]{Corollary}
\newtheorem{conjecture}[theorem]{Conjecture}

\theoremstyle{definition}
\newtheorem{definition}[theorem]{Definition}

\theoremstyle{remark}
\newtheorem{remark}[theorem]{Remark}

\newcommand{\R}{\mathbb{R}}
\newcommand{\Z}{\mathbb{Z}}

\newcommand{\Cov}{\mathrm{Cov}}
\newcommand{\Var}{\mathrm{Var}}
\newcommand{\Efull}{E_{\mathrm{full}}}
\newcommand{\Efour}{E_4}
\newcommand{\E}{\mathbb{E}}
\newcommand{\tr}{\operatorname{tr}}

\title{\textbf{Moments of Magic Squares:\\ Line-Sum Conditions, a Permutation Threshold,\\ and Random Energies}}

\author{Kyle Elliott Mathewson\\
\textit{Faculty of Science, University of Alberta}\\
\texttt{kyle.mathewson@ualberta.ca}}

\date{\today}

\begin{document}

\maketitle

\begin{abstract}
Every semi-magic square of order $n$ satisfies the axial moment identities $\sum_{i,j}s_{ij}j^m=\sum_{i,j}s_{ij}i^m=M(n)\sum_jj^m$ for all $m\ge1$, where $M(n)=n(n^2+1)/2$. For real matrices the converse requires $1\le m\le n-1$, whereas for arrangements of $\{1,\dots,n^2\}$ fewer moments suffice once $n\ge14$. We define the \emph{permutation threshold} $k^*(n)$ as the least $k$ such that every arrangement satisfying these identities for $1\le m\le k$ is semi-magic. In terms of the column-sum deviations $\gamma_j$, the degree-$k$ conditions state that $\sum_j\gamma_jt^j$ is divisible by $(t-1)^{k+1}$, linking $k^*(n)$ to the Prouhet--Tarry--Escott problem. While Schur's bound on real zeros gives $k^*(n)\le\sqrt{2(n-1)\log(n^4/4)}$, we prove the converse realisability bound $k^*(n)\ge(\sqrt2-o(1))\sqrt n$ (and $k^*(n)\ge(\sqrt6-o(1))\sqrt n$ when $4\mid n$) by perturbing semi-magic squares with complementary row pairs via horizontal swaps. We also show that $k^*(n)=n-1$ for $3\le n\le13$, $k^*(n)=n-2$ for $14\le n\le17$, and $k^*(n)\le n-3$ for $n\ge18$, and determine $k^*(n)$ for $18\le n\le40$ with computer assistance. Finally, at the $k=1$ level of the hierarchy, we specialise the Mantel--Hubert quadratic assignment variance formula to obtain exact means, variances, and limit laws under a uniform random arrangement for the full line-sum energy $\Efull$, for the four-term energy $\Efour$ combining the first moments and the two diagonals, and for Loly's moment of inertia, and find that $|L_4|=14{,}639{,}440$ arrangements of order $4$ satisfy $\Efour=0$.

\medskip
\noindent\textbf{Keywords:} magic squares, Prouhet--Tarry--Escott problem, polynomials with integer coefficients, permutation statistics, combinatorial central limit theorem, moment of inertia

\noindent\textbf{MSC 2020:} 05B20 (primary); 05A05, 11C08, 60C05, 60F05
\end{abstract}

\section{Introduction}
\label{sec:introduction}

Magic squares are among the oldest objects of recreational mathematics: the $3\times3$ Lo Shu appears in Chinese sources more than two millennia old, a $4\times4$ square is engraved in D\"urer's \emph{Melencolia~I} (1514), and constructions for every order have been known since the seventeenth century \cite{andrews1960magic,pickover2002zen}.
The modern literature treats them through enumeration and the geometry of the associated polytopes \cite{stanley1973magic,ehrhart1973magic,ahmed2003polyhedral,beck2006insideout,beck2010enumeration}, through spectral and matrix properties \cite{loly2009spectra,nordgren2012properties,nordgren2014spectra,cameron2013signatura}, and, in work of Loly and collaborators, through physical invariants such as the moment of inertia and the multipole moments \cite{loly2004invariance,rogers2004inertia,rogers2005multipole}.

\begin{definition}[Magic square]
\label{def:magic_square}
An \emph{arrangement} of order $n$ is a bijection $S=(s_{ij})$ from the cells of the $n\times n$ grid onto $\{1,\dots,n^2\}$.
It is a \emph{magic square} if every row, every column and both main diagonals sum to the magic constant $M(n)=n(n^2+1)/2$, and \emph{semi-magic} if the rows and columns do.
\end{definition}

We centre both the grid and the entries (\Cref{def:coords}): the cell $(i,j)$ sits at $(x_j,y_i)=(j-\frac{n-1}2,\frac{n-1}2-i)$ and carries the centred value $z_{ij}=s_{ij}-\frac{n^2+1}2$.
A \emph{test function} is a real function $\varphi$ on the cells, paired with $z$ through the normalised inner product $\langle\varphi,z\rangle=n^{-2}\sum_{i,j}\varphi_{ij}z_{ij}$.
Requiring a line $L$ to sum to $M(n)$ is equivalent to requiring $z$ to be orthogonal to the centred indicator of $L$, so an arrangement is magic if and only if $z$ is orthogonal to a fixed $2n$-dimensional subspace $W$ of test functions—recovering the classical fact that real magic squares of order $n\ge3$ form a vector space of dimension $n^2-2n$ \cite{ward1980vector}—while the squared length of the line-deviation vector, $\sum_L\delta_L^2$, is the energy functional studied in the Monte Carlo work of Pinn and Wieczerkowski \cite{pinn1998number}.

In the companion paper \cite{mathewson_gems}, the triples $(x_j,y_i,z_{ij})$ are viewed geometrically as a lifted point set in $\R^3$ whose convex hull is the \emph{Magic Gem} of the square. Here we ask a complementary algebraic and probabilistic question: \emph{how closely do low-degree polynomial moments of $z$ approximate the full line-sum conditions when $S$ is constrained to be a permutation of $\{1,\dots,n^2\}$?} This question leads to two interconnected problems—one extremal, one quantitative—that form the two halves of the paper.

\paragraph{The permutation threshold.}
Replacing the line indicators by the powers $x^m$ and $y^m$, $1\le m\le k$, gives the \emph{degree-$k$ conditions} $\sum_{i,j}z_{ij}x_j^m=\sum_{i,j}z_{ij}y_i^m=0$; equivalently, the moments of orders $1,\dots,k$ of the vector of column-sum deviations and of the vector of row-sum deviations vanish (\Cref{lem:poly_form}).
They hold for every semi-magic square, and for real-valued $z$ they imply equal row and column sums if and only if $k\ge n-1$ (Vandermonde).
For arrangements, integrality and the bounded size of the line sums lower this threshold.
Let $k^*(n)$ be the least $k$ such that every arrangement satisfying the degree-$k$ conditions is semi-magic.
A \emph{witness} for $k^*(n)\ge K$ is an arrangement that satisfies the degree-$(K-1)$ conditions and is not semi-magic.
The degree-$k$ conditions say that the polynomial $\sum_j\gamma_jt^j$ formed from the column deviations is divisible by $(t-1)^{k+1}$, and similarly for the rows, so that the positive and negative parts of $\gamma$ form a solution of a Prouhet--Tarry--Escott system \cite{wright1959prouhet,borwein2002computational}.
Our main result is the lower bound in \Cref{thm:perm_threshold}(b); together with Schur's bound, \Cref{thm:perm_threshold}(a), it gives
\[
(\sqrt2-o(1))\sqrt n\ \le\ k^*(n)\ \le\ \sqrt{2(n-1)\log(n^4/4)}\ \le\ 2\sqrt2\,\sqrt{n\log n},
\]
with $\sqrt6$ in place of $\sqrt2$ when $4\mid n$.
We also prove $k^*(n)=n-1$ for $3\le n\le13$, $k^*(n)=n-2$ for $14\le n\le17$ and $k^*(n)\le n-3$ for all $n\ge18$, with short proofs in which the lower bounds rest on explicit witness arrangements (printed in Appendix~\ref{sec:app_witnesses} for $n\le13$), and, separately and with computer assistance, we determine $k^*(n)$ for $18\le n\le40$ (\Cref{tab:kstar}).

The upper bound does not use that $\gamma$ comes from an arrangement, only that it is a nonzero integer vector with $\|\gamma\|_1\le n^4/4$.
For such vectors it is Schur's bound on the number of real zeros \cite{schur1933,erdosturan1950}; the same order follows from Mignotte's lower bound for the height of an integer polynomial with a zero of high order at $1$ \cite{mignotte1982,amoroso1995} and from \cite[Thm.~2.1]{BEK1999}.
It is below $n-1$ from $n=24$ on and improves on $k^*(n)\le n-3$ from $n=30$ on.
In the other direction, Amoroso's upper bound \cite[Thm.~2]{amoroso1995}, obtained from the Bombieri--Vaaler form of Siegel's lemma \cite{bombieri1987low}, gives, for large $n$, such vectors with a zero of order $(\sqrt{24}-o(1))\sqrt n$ at $1$, and between the orders $\sqrt{24n}$ and $\sqrt{8n\log n}$ lies the known gap, by a factor of order $\sqrt{\log n}$, between the constructions and the bounds for integer polynomials with a zero of prescribed order at $1$ \cite{mignotte1982,bombieri1987low,amoroso1995,BEK1999}.
The new ingredient is the realisability direction behind the lower bound: integer vectors of this kind must be column-deviation vectors of arrangements whose row sums all equal $M(n)$.
We obtain them from semi-magic squares with many pairs of complementary rows, in which swaps realise prescribed column deviations, combined with the pigeonhole principle.
Whether $k^*(n)$ grows like $\sqrt n$ or like $\sqrt{n\log n}$ remains open, and the computed values for $14\le n\le100$ (exact for $n\le40$, upper bounds beyond) do not decide it (\Cref{fig:kstar}).
Let $U(n)$ be the least $k$ for which the lattice of admissible $\gamma$ contains no nonzero vector obeying the size constraints satisfied by column deviations, so that $k^*(n)\le U(n)$.  Then $k^*(n)=U(n)$ for $n\le40$, witnesses prove $k^*(n)\ge U(n)$ for $41\le n\le46$ (with $U(n)$ computed but not certified), and we conjecture that $k^*(n)=U(n)$ for all $n$ (\Cref{conj:lattice}).

\paragraph{Random arrangements and the $k=1$ relaxation.}
While \Cref{sec:theory} asks when degree-$k$ axial moments force semi-magicness, Sections~\ref{sec:random_energy} and~\ref{sec:computations} examine the lowest non-trivial level $k=1$ of the moment hierarchy across the entire permutation group $S_{n^2}$.
For a uniformly random arrangement, each inner product $\langle\varphi,z\rangle$ is a linear permutation statistic, so both the full line-sum energy $\Efull=N^{-2}\sum_L\delta_L^2$ ($N=n^2$, all $2n+2$ lines) and the four-term energy $\Efour$—which replaces the $2n$ row and column sums by the two degree-$1$ axial moments $\langle x,z\rangle$ and $\langle y,z\rangle$ while keeping the two main diagonals (\Cref{def:energies})—are quadratic forms in a random permutation of the centred values, i.e.\ quadratic assignment statistics.
Specialising the permutation variance formula of Mantel \cite{mantel1967} and Hubert \cite{hubert1987assignment} to symmetric matrices with zero row sums via M\"obius inversion on the partition lattice (\Cref{prop:var_identity}), we express the variance in terms of three matrix trace invariants and obtain exact closed-form means and variances for both energies at every order $n$ (\Cref{thm:var_closed}).
Hoeffding's combinatorial central limit theorem then shows that $\Efour/\E[\Efour]\xrightarrow{d}\mathrm{Exp}(1)$ as $n\to\infty$ (\Cref{cor:random_energy_clt}), dominated by the two first-moment terms $\langle x,z\rangle^2+\langle y,z\rangle^2$.
As a byproduct, the same inner-product framework yields the exact mean and variance of Loly's moment-of-inertia invariant $I(S)=\sum s_{ij}(x_j^2+y_i^2)$ over random permutations (\Cref{cor:loly_null}), showing that Loly's constant $n^2(n^4-1)/12$ for semi-magic squares \cite{loly2004invariance} is simply the null expectation $\E[I(S)]$.

Finally, in \Cref{sec:computations} we study the zero set $L_n = \{\Efour = 0\}$ for small $n$. Although $k^*(3)=2$ for axial moments alone, we give a short elementary proof that adding the two main diagonals at $n=3$ collapses the $k=1$ zero set $L_3$ to the $8$ magic squares (\Cref{prop:n3_lowmode}). At order $n=4$, by contrast, a meet-in-the-middle enumeration shows that exactly $|L_4|=14{,}639{,}440$ of the $16!$ arrangements satisfy $\Efour=0$, of which $7{,}040$ are magic (\Cref{prop:L4}), in close agreement with a Gaussian local-limit heuristic.

\paragraph{Organisation.}
\Cref{sec:background} sets up the centred coordinates, the orthogonal decomposition onto the line space $W$, and the classical permutation-statistic tools, summarised in \Cref{tab:notation}.
\Cref{sec:theory} treats the polynomial moment hierarchy and proves \Cref{thm:perm_threshold} on the permutation threshold $k^*(n)$ (with the complementary-row square constructions in Appendix~\ref{sec:app_twin} and explicit witnesses in Appendix~\ref{sec:app_witnesses}).
\Cref{sec:random_energy} establishes the exact mean, variance and limit law of $\Efull$, $\Efour$ and $I(S)$ for random arrangements (with the explicit trace evaluations in Appendix~\ref{sec:app_variance}), \Cref{sec:computations} analyses the zero sets $L_3$ and $L_4$ and single-transposition energy gaps, and \Cref{sec:discussion} collects open questions. All computational verifications and exact certificates are documented in Appendix~\ref{sec:appendix}.

\section{Notation and tools}
\label{sec:background}

\subsection{Centred coordinates}
\label{sec:bg_coords}

Throughout, $n\ge3$ and $N=n^2$.  Cells are pairs $c=(i,j)$ with $0\le i,j\le n-1$ ($i$ the row from the top, $j$ the column from the left).

\begin{definition}[Centred coordinates]
\label{def:coords}
To the cell $(i,j)$ of an arrangement $S$ assign
\begin{equation}
x_{ij}=x_j=j-\frac{n-1}{2},\qquad y_{ij}=y_i=\frac{n-1}{2}-i,\qquad z_{ij}=s_{ij}-\frac{N+1}{2}.
\label{eq:coords}
\end{equation}
We call $z=z(S)$ the \emph{centred entries} of $S$, and regard $x$, $y$ and $z$ as functions on the cells.
\end{definition}

The function $z$ is the vertical coordinate of the lifted points $(x_c,y_c,z_c)$ studied in the companion paper \cite{mathewson_gems}.
A \emph{line} is one of the $n$ rows $R_i$, the $n$ columns $C_j$, the main diagonal $D_{\mathrm m}=\{i=j\}$ or the anti-diagonal $D_{\mathrm a}=\{i+j=n-1\}$, and $\mathbf 1_L$ denotes the indicator of a line $L$.
Each line has $n$ cells, and since $M(n)=n\cdot\frac{N+1}2$ the \emph{deviation} of a line is
\begin{equation}
\delta_L(S)=\sum_{c\in L}s_c-M(n)=\sum_{c\in L}z_c .
\label{eq:line_dev}
\end{equation}
We write $\rho_i$, $\gamma_j$, $\delta_{\mathrm m}$, $\delta_{\mathrm a}$ for the deviations of row $i$, column $j$ and the two diagonals; $\sum_i\rho_i=\sum_j\gamma_j=0$.
On the space $\mathcal F=\R^{\text{cells}}$ we use $\langle f,g\rangle=\frac1N\sum_cf_cg_c$ and $\|f\|=\langle f,f\rangle^{1/2}$; $\bar f=\langle f,\mathbf 1\rangle$ is the mean of $f$ and $\tilde f=f-\bar f\mathbf 1$ its centred version.
For vectors in $\R^n$, such as $\rho$ and $\gamma$, $\|\cdot\|_1$ and $\|\cdot\|_2$ denote the usual $\ell^1$ and Euclidean norms.

\begin{lemma}
\label{lem:variances}
For every arrangement, $\bar x=\bar y=\bar z=0$, $\|x\|^2=\|y\|^2=\frac{n^2-1}{12}$, $\|z\|^2=\frac{N^2-1}{12}$, $\langle x,y\rangle=0$, and $\langle\mathbf 1_L,z\rangle=\delta_L/N$ for every line $L$.
\end{lemma}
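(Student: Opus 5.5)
All claims are direct computations with the inner product $\langle f,g\rangle=\frac1N\sum_c f_cg_c$. The plan is to reduce each to a one-dimensional sum using the product structure of the grid ($x$ depends only on $j$, $y$ only on $i$) and the fact that $s$ is a bijection onto $\{1,\dots,N\}$.

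\textbf{Means.} Since $x_j=j-\frac{n-1}2$ and $j$ runs over $0,\dots,n-1$, we have $\sum_j x_j=0$. Hence
\[
\bar x=\frac1N\sum_{i,j}x_j=\frac1n\sum_jx_j=0,
\]
and similarly $\bar y=0$. For $z$, the bijection lets us reindex: $\sum_c s_c=\sum_{v=1}^N v=\frac{N(N+1)}2$. Subtracting $N\cdot\frac{N+1}2$ gives $\bar z=0$.

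\textbf{Second moments.} We have
\[
\|x\|^2=\frac1n\sum_{j=0}^{n-1}\Bigl(j-\tfrac{n-1}2\Bigr)^2,
\]
which is the variance of the uniform distribution on $n$ equally spaced integers. The standard identity $\sum_{j=0}^{n-1}j^2=\frac{(n-1)n(2n-1)}6$ together with the mean $\frac{n-1}2$ gives $\frac{n^2-1}{12}$. The same computation applies to $y$, since $y_i=-(i-\frac{n-1}2)$ and the sign disappears on squaring. For $z$, the bijection again converts the sum over cells into $\frac1N\sum_{v=1}^N\bigl(v-\frac{N+1}2\bigr)^2$. This is the same identity with $n$ replaced by $N$ (shifting $v\mapsto v-1$ does not change the value), so it equals $\frac{N^2-1}{12}$.

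\textbf{Orthogonality and lines.} The sum factorises:
\[
\langle x,y\rangle=\frac1N\Bigl(\sum_jx_j\Bigr)\Bigl(\sum_iy_i\Bigr)=0.
\]
Finally, $\langle\mathbf 1_L,z\rangle=\frac1N\sum_{c\in L}z_c$, and by \eqref{eq:line_dev} this equals $\delta_L/N$. That identity holds because each line has exactly $n$ cells and $M(n)=n\cdot\frac{N+1}2$.

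No step is a real obstacle. The only point requiring care is the reindexing for $\|z\|^2$, which uses bijectivity of $S$; the identities for $x$ and $y$ hold for any grid regardless of the arrangement.
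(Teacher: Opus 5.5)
Your proof is correct and is the paper's argument: the paper likewise notes that $x$, $y$, $z$ take $n$, $n$, $N$ equally spaced values centred at $0$, each equally often, factorises $\langle x,y\rangle=\frac1N\sum_jx_j\sum_iy_i=0$, and reads the line claim off \eqref{eq:line_dev}. The only difference is that you write out the one-dimensional variance sums that the paper leaves implicit.
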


\begin{proof}
$x$, $y$ and $z$ take $n$, $n$ and $N$ equally spaced values centred at $0$, each equally often; $\langle x,y\rangle=\frac1N\sum_jx_j\sum_iy_i=0$; the last claim is \eqref{eq:line_dev}.
\end{proof}

\subsection{The line space and the energies}
\label{sec:theory_projection}

Let $\tilde{\mathbf 1}_L=\mathbf 1_L-\frac1n\mathbf 1$ be the centred indicator of a line and
$W_R=\operatorname{span}\{\tilde{\mathbf 1}_{R_i}\}$, $W_C=\operatorname{span}\{\tilde{\mathbf 1}_{C_j}\}$, $W_D=\operatorname{span}\{\tilde{\mathbf 1}_{D_{\mathrm m}},\tilde{\mathbf 1}_{D_{\mathrm a}}\}$, $W=W_R+W_C+W_D\subset\mathbf 1^\perp$.
$P_W$ denotes the orthogonal projection onto $W$.

\begin{lemma}[The line space]
\label{lem:W_structure}
Let $n\ge3$.  For lines $L,L'$, $N\langle\tilde{\mathbf 1}_L,\tilde{\mathbf 1}_{L'}\rangle=|L\cap L'|-1$.  Consequently:
\begin{enumerate}
\item $W_R$ consists of the zero-mean functions of the row index and $W_C$ of the zero-mean functions of the column index; both have dimension $n-1$, and the orthogonal projection of $z$ onto them is $(i,j)\mapsto\rho_i/n$, resp.\ $\gamma_j/n$.
\item $W_R$, $W_C$, $W_D$ are mutually orthogonal, and the Gram matrix of $(\tilde{\mathbf 1}_{D_{\mathrm m}},\tilde{\mathbf 1}_{D_{\mathrm a}})$ is $\Gamma_D/N$ with $\Gamma_D=\bigl(\begin{smallmatrix}n-1&\epsilon_n\\ \epsilon_n&n-1\end{smallmatrix}\bigr)$, $\epsilon_n=0$ for odd and $-1$ for even $n$; its eigenvalues are $n-1,n-1$ (odd $n$) and $n-2,n$ (even $n$).
\item $W=W_R\oplus W_C\oplus W_D$ has dimension $2n$, and $z\perp W$ if and only if $S$ is magic.
\end{enumerate}
\end{lemma}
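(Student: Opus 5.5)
The plan is to start from the inner-product formula and read off every consequence from it. For lines $L,L'$ we have $\langle\mathbf 1_L,\mathbf 1_{L'}\rangle=|L\cap L'|/N$ and $\langle\mathbf 1_L,\mathbf 1\rangle=n/N$. Expanding $\langle\mathbf 1_L-\frac1n\mathbf 1,\mathbf 1_{L'}-\frac1n\mathbf 1\rangle$ then gives
\[
\frac{|L\cap L'|}{N}-\frac{2}{N}+\frac{1}{N}=\frac{|L\cap L'|-1}{N}.
\]
All the other claims follow from tabulating $|L\cap L'|$. It equals $n$ for $L=L'$, and $0$ for two distinct rows or two distinct columns. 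A row meets a column in exactly one cell, and a row or column meets each diagonal in exactly one cell. The two diagonals meet in $1$ cell when $n$ is odd (the centre) and in $0$ cells when $n$ is even.

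For (1), the span of the centred row indicators is the set of centred functions of the row index. These are exactly the zero-mean functions of $i$. The space of functions of $i$ has dimension $n$, so centring leaves dimension $n-1$; the single relation among the generators is $\sum_i\tilde{\mathbf 1}_{R_i}=0$. To compute the projection of $z$, note that $W_R$ is the zero-mean part of the functions of $i$, and $z$ has mean $0$. So the projection of $z$ onto $W_R$ is the conditional expectation of $z$ given the row index, which is $\frac1n\sum_j z_{ij}=\rho_i/n$. This function already has zero mean because $\sum\rho_i=0$. The same argument applies to columns.

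For (2), the table gives $|L\cap L'|-1=0$ for every row–column pair and every row/column–diagonal pair. Hence the generators are pairwise orthogonal, and so are the three subspaces. The diagonal Gram entries are $(n-1)/N$ on the diagonal and $(|D_{\mathrm m}\cap D_{\mathrm a}|-1)/N=\epsilon_n/N$ off it. The eigenvalues of $\Gamma_D$ are $n-1\pm\epsilon_n$.

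For (3), the three summands are mutually orthogonal, so the sum is direct and its dimension is $(n-1)+(n-1)+\dim W_D$. Since $n\ge3$, $\Gamma_D$ has strictly positive eigenvalues and is therefore nonsingular, so $\dim W_D=2$ and $\dim W=2n$. Finally, $z$ has mean $0$ by \Cref{lem:variances}, so $\langle\tilde{\mathbf 1}_L,z\rangle=\langle\mathbf 1_L,z\rangle=\delta_L/N$. Thus $z\perp W$ if and only if every line deviation vanishes, that is, if and only if $S$ is magic.

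No step is a real obstacle. The only points needing care are the even/odd intersection count of the two diagonals and the use of $n\ge3$ to guarantee that $\Gamma_D$ is nonsingular.
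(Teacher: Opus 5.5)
Your proposal is correct and follows essentially the same route as the paper: the line intersection counts give the Gram entries, the projection onto functions of the row (resp.\ column) index is the row (resp.\ column) mean, and $\langle\tilde{\mathbf 1}_L,z\rangle=\delta_L/N$ because $\bar z=0$. You also make explicit two steps the paper leaves implicit: the expansion that yields $|L\cap L'|-1$, and the nonsingularity of $\Gamma_D$ for $n\ge3$, which gives $\dim W=2n$.
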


\begin{proof}
Each line has $n$ of the $N$ cells, which gives the inner products: a row meets every column and each diagonal in one cell, two distinct rows (or columns) are disjoint, and the diagonals meet in the centre cell if and only if $n$ is odd.  The projection onto the functions of the row index replaces $z$ by its row means $\rho_i/n$.  Finally $\langle\tilde{\mathbf 1}_L,z\rangle=\delta_L/N$ by \Cref{lem:variances}.
\end{proof}

Part 3 is equivalent to the classical fact that the real magic squares of order $n\ge3$ (all line sums equal to a common, arbitrary value) form a space of dimension $N-2n$ \cite{ward1980vector}.

\begin{definition}[Energies]
\label{def:energies}
For an arrangement $S$ with centred entries $z$,
\begin{align}
\Efull(S)&=\sum_{L}\langle\mathbf 1_L,z\rangle^{2}\qquad\text{(all $2n+2$ lines)},\label{eq:Efull}\\
\Efour(S)&=\langle x,z\rangle^{2}+\langle y,z\rangle^{2}+\langle\mathbf 1_{D_{\mathrm m}},z\rangle^{2}+\langle\mathbf 1_{D_{\mathrm a}},z\rangle^{2}.\label{eq:Elow}
\end{align}
We call $\Efour$ the \emph{four-term energy}.
\end{definition}

$\Efour$ keeps the diagonal conditions and replaces the $2n-2$ independent row and column conditions by the two degree-$1$ moment conditions (\Cref{prop:hierarchy}); it is $D_4$-invariant and has a fixed number of terms, which makes it a test case for a limit law (\Cref{cor:random_energy_clt}).
By \Cref{lem:variances},
\begin{equation}
N^{2}\Efull(S)=\sum_{L}\delta_L(S)^{2}=\sum_i\rho_i^{2}+\sum_j\gamma_j^{2}+\delta_{\mathrm m}^{2}+\delta_{\mathrm a}^{2},
\label{eq:Efull_devs}
\end{equation}
the sum of the squared deviations of all $2n+2$ lines, a non-negative integer.
This is the energy of the Monte Carlo study of Pinn and Wieczerkowski \cite{pinn1998number}, who use $\sum_L\delta_L^2=N^2\Efull$; we follow them in calling $\Efull$ and $\Efour$ energies.
$\Efull(S)=0$ if and only if $S$ is magic, and $\Efull$ depends on $z$ only through its projection onto $W$ (\Cref{fig:decomposition}).
Both energies are invariant under the dihedral group $D_4$ of the square, which permutes the lines and maps $\{\pm x,\pm y\}$ to itself.
All $n$ rows and $n$ columns are kept in \eqref{eq:Efull}: the last row and column are redundant for the zero set, but omitting them would destroy the $D_4$-invariance.

\begin{remark}[The four conditions]
\label{rem:four_term_insufficient}
$\Efour=0$ consists of four linear conditions on $z$, and ``magic'' of $2n$ independent ones; for every $n\ge3$ the four conditions are therefore strictly weaker than the magic conditions on real-valued $z$.
For permutations the two coincide when $n=3$ (\Cref{prop:n3_lowmode}) but not when $n=4$: the arrangement
\[
S=\begin{pmatrix}9&4&12&3\\16&1&14&7\\10&15&11&8\\2&5&6&13\end{pmatrix}
\]
has both diagonal sums $34$, row sums $(28,38,44,26)$ and column sums $(37,25,43,31)$, so $\gamma=(3,-9,9,-3)$ and $\rho=(-6,4,10,-8)$, and $\langle x,z\rangle=\langle y,z\rangle=0$; so $\Efour(S)=0$ while $N^2\Efull(S)=396$.
Both orders are treated in \Cref{sec:lowmode_n4}.
\end{remark}

\begin{remark}[Linear functionals and $\Efull$]
\label{rem:quant_balance}
By \Cref{lem:W_structure}, with $\delta=(\delta_{\mathrm m},\delta_{\mathrm a})$,
\[
\|P_Wz\|^2=\frac{n\bigl(\|\rho\|_2^2+\|\gamma\|_2^2\bigr)}{N^2}+\frac1N\,\delta^{\mathsf T}\Gamma_D^{-1}\delta ,
\]
and since the eigenvalues of $\Gamma_D$ are $n-1,n-1$ for odd $n$ and $n-2,n$ for even $n$, comparison with \eqref{eq:Efull_devs} gives $n\Efull\le\|P_Wz\|^2\le\lambda_n\Efull$, with $\lambda_n=n^2/(n-1)$ for odd $n$ and $\lambda_n=n^2/(n-2)$ for even $n$.
Hence $|\sum_cz_c\varphi_c|=N|\langle\varphi,P_Wz\rangle|\le N\|\varphi\|\sqrt{\lambda_n\Efull}$ for every $\varphi\in W$; for instance $x,y\in W_C\oplus W_R$, so the centre of mass $\sum_cz_c(x_c,y_c)$ is controlled by $\Efull$.
\end{remark}

\begin{figure}[htbp]
\centering
\includegraphics[width=\textwidth]{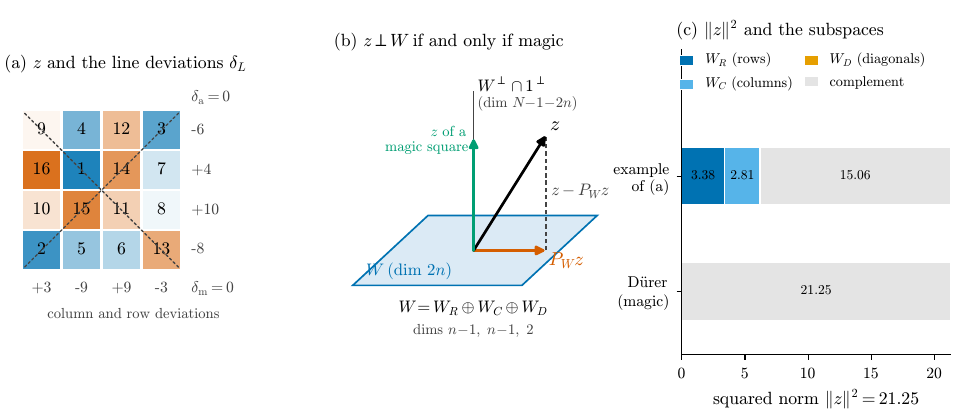}
\caption{The line space for $n=4$. $W$ is the $2n$-dimensional span of the centred line indicators, and $W^{\perp}\cap\mathbf 1^{\perp}$ (dimension $N-1-2n$) is its orthogonal complement among the zero-mean functions. (a)~The arrangement of \Cref{rem:four_term_insufficient}, shaded by $z$, with its row and column deviations (both diagonal deviations are $0$). (b)~Schematic of the orthogonal splitting $z=P_Wz+(z-P_Wz)$; the arrangement is magic if and only if $P_Wz=0$. (c)~The squared norms $\|P_{W_R}z\|^2,\|P_{W_C}z\|^2,\|P_{W_D}z\|^2$ and that of the complement, for the arrangement of (a) ($3.38+2.81+0+15.06$) and for D\"urer's magic square (all of $\|z\|^2=21.25$ lies in the complement).}
\label{fig:decomposition}
\end{figure}

\subsection{Permutation statistics}
\label{sec:bg_perm}

Let $\pi$ be a uniformly random permutation of $\{1,\dots,N\}$ and $a,a',b\in\R^N$.
For the linear permutation statistics $\Phi_a=\sum_ia_ib_{\pi(i)}$ one has $\E\Phi_a=N\bar a\bar b$ (here $\bar a=\frac1N\sum_ia_i$) and, by the theorem of Wald and Wolfowitz \cite{waldwolfowitz1944,noether1949} (polarised),
\begin{equation}
\Cov(\Phi_a,\Phi_{a'})=\frac{1}{N-1}\sum_i(a_i-\bar a)(a'_i-\bar a')\sum_j(b_j-\bar b)^2 .
\label{eq:ww_identity}
\end{equation}
For a uniformly random arrangement, $z_c=b_{\pi(c)}$ with $b$ the centred values $b_v=v-\frac{N+1}2$, $1\le v\le N$; we put
\begin{equation}
\sigma_n^2=\frac{\|z\|^2}{N-1}=\frac{N+1}{12}=\frac{n^2+1}{12}.
\label{eq:sigma}
\end{equation}
Hoeffding's combinatorial central limit theorem \cite[Thm.~4 and condition~(14), p.~560]{hoeffding1951} states that for real vectors $a=a^{(N)}$, $b=b^{(N)}$ the statistic $\sum_ia_ib_{\pi(i)}$ is asymptotically normal, i.e.\ $\bigl(\sum_ia_ib_{\pi(i)}-\E\bigr)/\sqrt{\Var}$ converges in distribution to $\mathcal N(0,1)$, if
\begin{equation}
\lim_{N\to\infty}N\,\frac{\max_i(a_i-\bar a)^2\,\max_j(b_j-\bar b)^2}{\sum_i(a_i-\bar a)^2\,\sum_j(b_j-\bar b)^2}=0 .
\label{eq:hoeffding_max}
\end{equation}
Motoo \cite{motoo1957} gives a Lindeberg-type condition and Bolthausen \cite{bolthausen1984} a Berry--Esseen bound.

\subsection{Known counts}
\label{sec:bg_polytopes}

There are $8$ magic squares of order $3$, all images of the Lo Shu under $D_4$; $7{,}040$ of order $4$, forming the $880$ $D_4$-orbits listed by Fr\'enicle de Bessy \cite{frenicle1693} and recounted many times since \cite{beck2010enumeration}; and $275{,}305{,}224$ $D_4$-orbits of order $5$, i.e.\ $2{,}202{,}441{,}792$ squares, counted by Schroeppel in 1973 and reported by Gardner \cite{gardner1976}.
For $n\ge6$ only statistical estimates are known \cite{pinn1998number,trump2012enumeration}.

\subsection{Notation}
\label{sec:notation}

\Cref{tab:notation} lists the main symbols and where they are defined.

\begin{table}[htbp]
\centering\small
\renewcommand{\arraystretch}{1.12}
\begin{tabular}{@{}>{\raggedright\arraybackslash}p{0.27\textwidth}>{\raggedright\arraybackslash}p{0.68\textwidth}@{}}
\toprule
symbol & meaning (where defined)\\
\midrule
$n$, $N=n^2$, $M(n)$ & order, number of cells, magic constant $n(N+1)/2$ (\Cref{def:magic_square})\\
$c=(i,j)$, $s_c$ & cell in row $i$ and column $j$ ($0\le i,j<n$); entry of the arrangement $S$\\
$x$, $y$, $z$ & centred column and row coordinates, centred entries (\Cref{def:coords})\\
$\langle f,g\rangle$, $\|f\|$, $\bar f$, $\tilde f$ & $N^{-1}\sum_cf_cg_c$, its norm, the mean and the centred version of $f$ (\Cref{sec:bg_coords})\\
$\delta_L$, $\rho$, $\gamma$, $\delta$ & deviation of the line $L$; row and column deviation vectors; $(\delta_{\mathrm m},\delta_{\mathrm a})$ \eqref{eq:line_dev}\\
$W$; $W_R$, $W_C$, $W_D$; $P_W$ & span of the centred line indicators; its row, column and diagonal parts; orthogonal projection onto $W$ (\Cref{lem:W_structure})\\
$\Gamma_D$, $\epsilon_n$ & $N$ times the Gram matrix of the centred diagonal indicators; $\epsilon_n=0$ ($n$ odd), $-1$ ($n$ even) (\Cref{lem:W_structure})\\
$\Efull$, $\Efour$ & line-sum energy and four-term energy (\Cref{def:energies})\\
$\sigma_n^2$ & $(n^2+1)/12$ \eqref{eq:sigma}\\
\midrule
$P_\gamma(t)$ & $\sum_j\gamma_jt^j$ (\Cref{sec:theory_hierarchy})\\
$\Lambda_{n,k}$ & $\{\gamma\in\Z^n:(t-1)^{k+1}\mid P_\gamma\}$ (\Cref{sec:kstar})\\
$k^*(n)$ & permutation threshold (\Cref{def:kstar})\\
$U(n)$ & least $k$ such that no nonzero vector of $\Lambda_{n,k}$ satisfies \eqref{eq:col_feasible} (\Cref{sec:kstar_exact})\\
\midrule
$T_\varphi$, $\tau_\ell$ & $\langle\varphi,z\rangle$ for a random arrangement; standard deviation of $T_\ell=T_{\varphi_\ell}$ (\Cref{sec:random_energy})\\
$\Gamma$, $C_n$ & Gram matrix of the centred test functions of an energy; correlation matrix of the four statistics of $\Efour$ (\Cref{prop:random_energy_moments}, \Cref{cor:random_energy_clt})\\
$A$, $h_c$ & matrix of an energy as a quadratic form in $z$; $h_c=A_{cc}$ (\Cref{sec:random_var}, Appendix~\ref{sec:app_variance})\\
$L_n$, $U_1,\dots,U_4$, $\Sigma_U$ & arrangements with $\Efour=0$; the four additive statistics defining $L_n$; their covariance matrix (\Cref{sec:lowmode_n4})\\
\bottomrule
\end{tabular}
\caption{Notation.}
\label{tab:notation}
\end{table}

\section{Moment conditions and the permutation threshold}
\label{sec:theory}

\subsection{Moment conditions}
\label{sec:theory_hierarchy}

For a centred $f\in\mathcal F$ let $\gamma_j(f)=\sum_if_{ij}$ and $\rho_i(f)=\sum_jf_{ij}$ be its column and row sums; for $f=z(S)$ these are the deviations $\gamma,\rho$ of $S$.
For $\gamma\in\R^n$ put $P_\gamma(t)=\sum_{j=0}^{n-1}\gamma_jt^j$.

\begin{definition}[Degree-$k$ conditions]
\label{def:degree_k}
A centred $f\in\mathcal F$ satisfies the \emph{degree-$k$ conditions} ($k\ge1$) if $\langle x^{m},f\rangle=\langle y^{m},f\rangle=0$ for $1\le m\le k$.
For $f=z(S)$ these read $\sum_{i,j}z_{ij}x_j^m=\sum_{i,j}z_{ij}y_i^m=0$.
\end{definition}

\begin{lemma}[Polynomial form]
\label{lem:poly_form}
If $\varphi$ depends only on the column index, $\langle\varphi,f\rangle=\frac1N\sum_j\varphi(j)\gamma_j(f)$; similarly for rows.
For $\gamma\in\R^{n}$ and $k\ge0$ the following are equivalent:
(i)~$\sum_jx_j^{m}\gamma_j=0$ for $0\le m\le k$;
(ii)~$\sum_j\binom{j}{r}\gamma_j=0$ for $0\le r\le k$;
(iii)~$(t-1)^{k+1}$ divides $P_\gamma(t)$.
Hence a centred $f$ satisfies the degree-$k$ conditions if and only if $(t-1)^{k+1}$ divides both $P_{\gamma(f)}$ and $P_{\rho(f)}$.
\end{lemma}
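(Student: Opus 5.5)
The first claim follows by grouping the sum over cells by column. If $\varphi_{ij}=\varphi(j)$, then
\[
N\langle\varphi,f\rangle=\sum_j\varphi(j)\sum_if_{ij}=\sum_j\varphi(j)\gamma_j(f),
\]
and the row case is the same computation with the roles of $i$ and $j$ swapped.

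For the equivalences, the plan is to show that conditions (i), (ii) and (iii) each say that $\gamma$ is annihilated by the functionals $\gamma\mapsto\sum_jq(j)\gamma_j$ for all $q$ in one fixed space of polynomials, namely the real polynomials of degree at most $k$. For (i): $x_j=j-\frac{n-1}2$ is an affine function of $j$ with leading coefficient $1$. Hence $\{x^m:0\le m\le k\}$ and $\{j^m:0\le m\le k\}$ span the same space of polynomials in $j$ of degree $\le k$, by the binomial expansion and a triangular change of basis. For (ii): $\binom{j}{r}$ is a polynomial in $j$ of exact degree $r$, so $\{\binom jr:0\le r\le k\}$ is again a basis of that space. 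Since conditions (i) and (ii) are linear in the test polynomials, this gives (i)$\iff$(ii).

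For (ii)$\iff$(iii), Taylor-expand $P_\gamma$ at $t=1$. The identity
\[
\frac{1}{r!}P_\gamma^{(r)}(1)=\sum_j\binom jr\gamma_j
\]
comes from differentiating $t^j$ $r$ times, which gives $j(j-1)\cdots(j-r+1)t^{j-r}$, and evaluating at $t=1$. Thus (ii) says that $P_\gamma$ and its first $k$ derivatives vanish at $1$, i.e.\ $1$ is a root of multiplicity at least $k+1$. That is exactly divisibility by $(t-1)^{k+1}$ in $\R[t]$. When $\gamma$ is an integer vector, the divisibility also holds in $\Z[t]$, since $(t-1)^{k+1}$ is monic.

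The final sentence ("Hence a centred $f$ satisfies the degree-$k$ conditions iff $(t-1)^{k+1}$ divides both $P_{\gamma(f)}$ and $P_{\rho(f)}$") needs care on two points.

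\begin{itemize}
\item \emph{The $m=0$ term.} The degree-$k$ conditions only run over $1\le m\le k$, while (i) also includes $m=0$. The $m=0$ condition is $\sum_j\gamma_j(f)=N\bar f=0$, which holds automatically because $f$ is centred.
\item \emph{Rows versus columns.} Apply the first claim with $\varphi=x^m$ to turn the column conditions into (i) for $\gamma(f)$. For the rows, $y_i=\frac{n-1}2-i$ is affine in $i$ with leading coefficient $-1$, so $\{y^m\}_{m\le k}$ again spans the polynomials of degree $\le k$ in $i$. The same argument then applies to $\rho(f)$ indexed by $i$, giving divisibility of $P_\rho$.
\end{itemize}

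None of these steps is a real obstacle. The only points needing attention are the automatic $m=0$ condition and the sign of the row coordinate, and the latter is harmless because only the span of the test polynomials matters.
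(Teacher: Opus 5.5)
Your proof is correct and follows essentially the same route as the paper: group the sum by columns for the first claim, use the common span of $\{x^m\}_{m\le k}$ and $\{\binom jr\}_{r\le k}$ for (i)$\Leftrightarrow$(ii), use the Taylor expansion of $P_\gamma$ at $t=1$ for (ii)$\Leftrightarrow$(iii), and note that the $m=0$ condition is automatic for centred $f$. Writing the Taylor coefficients as $\frac1{r!}P_\gamma^{(r)}(1)$ and remarking on the sign of $y_i$ only spell out these same steps in a little more detail.
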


\begin{proof}
The first statement is $\sum_{i,j}\varphi(j)f_{ij}=\sum_j\varphi(j)\gamma_j(f)$.
The families $(x_j^{m})_{m\le k}$ and $(\binom{j}{r})_{r\le k}$ both span the restrictions to $\{0,\dots,n-1\}$ of the polynomials of degree at most $k$ in $j$, so (i)$\Leftrightarrow$(ii); and $P_\gamma(t)=\sum_{r}\bigl(\sum_j\binom jr\gamma_j\bigr)(t-1)^{r}$ is the Taylor expansion at $t=1$, so (ii)$\Leftrightarrow$(iii).
The case $m=0$ of (i) holds for every centred $f$.
\end{proof}

\begin{proposition}[Moment conditions]
\label{prop:hierarchy}
Let $n\ge3$ and let $f\in\mathcal F$ be centred.
\begin{enumerate}
\item[(a)] If $\gamma(f)=\rho(f)=0$, then $\langle a(x)+b(y),f\rangle=0$ for all functions $a,b$ on $\{x_0,\dots,x_{n-1}\}$; in particular $f$ satisfies the degree-$k$ conditions for every $k$.
For $f=z(S)$ with $S$ magic, also $\langle a(x)+b(y)+c_1\mathbf 1_{D_{\mathrm m}}+c_2\mathbf 1_{D_{\mathrm a}},z\rangle=0$.
\item[(b)] If $k\ge n-1$, the degree-$k$ conditions imply $\gamma(f)=\rho(f)=0$.
\item[(c)] If $1\le k\le n-2$, the function $f_{ij}=\frac1n(-1)^{j}\binom{k+1}{j}$ ($0\le j\le k+1$), $f_{ij}=0$ ($j>k+1$), satisfies the degree-$k$ conditions but not the degree-$(k+1)$ conditions, and $\gamma(f)\ne0$.
\end{enumerate}
\end{proposition}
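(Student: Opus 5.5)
All three parts reduce, through \Cref{lem:poly_form}, to statements about the column and row sum vectors $\gamma(f),\rho(f)$ and the polynomials $P_{\gamma(f)},P_{\rho(f)}$.

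For (a), write $\langle a(x)+b(y),f\rangle=\langle a(x),f\rangle+\langle b(y),f\rangle$. By the first statement of \Cref{lem:poly_form}, the first term equals $\frac1N\sum_ja(x_j)\gamma_j(f)$ and the second equals $\frac1N\sum_ib(y_i)\rho_i(f)$. Both vanish when $\gamma(f)=\rho(f)=0$. Taking $a(x)=x^m$ and $b=0$, and then $a=0$ and $b(y)=y^m$, gives the degree-$k$ conditions for every $k$. For a magic $S$, every row and column deviation of $z$ is zero, so $\gamma(z)=\rho(z)=0$. Also $\langle\mathbf 1_{D},z\rangle=\delta_D/N=0$ for both diagonals by \Cref{lem:variances}, and linearity then gives the extended identity.

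For (b), the degree-$k$ conditions with $k\ge n-1$ give $\sum_jx_j^m\gamma_j=0$ for $1\le m\le n-1$, and the case $m=0$ holds because $f$ is centred. These are $n$ linear equations in $\gamma\in\R^n$ whose coefficient matrix is the Vandermonde matrix at the distinct nodes $x_0,\dots,x_{n-1}$. This matrix is invertible, so $\gamma=0$, and likewise $\rho=0$. Equivalently, $(t-1)^{n}$ cannot divide a nonzero polynomial of degree at most $n-1$.

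For (c), compute the sums of the given $f$. Each column $j$ has $n$ equal entries, so $\gamma_j=(-1)^j\binom{k+1}{j}$ for $j\le k+1$ and $\gamma_j=0$ otherwise. Hence $P_\gamma(t)=(1-t)^{k+1}=(-1)^{k+1}(t-1)^{k+1}$; this is well defined because $k+1\le n-1$. This polynomial is divisible by $(t-1)^{k+1}$ but not by $(t-1)^{k+2}$, and $\gamma\ne0$.

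Each row $i$ has sum $\rho_i=\frac1n\sum_j(-1)^j\binom{k+1}{j}=\frac1n(1-1)^{k+1}=0$, so $\rho=0$ and $P_\rho=0$ is divisible by every power of $t-1$. The same alternating sum also shows that $f$ is centred. By \Cref{lem:poly_form}, $f$ therefore satisfies the degree-$k$ conditions. Since the column polynomial fails divisibility by $(t-1)^{k+2}$, the degree-$(k+1)$ conditions fail.

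None of these steps is a real obstacle. The only point needing care is in (c): one must check that $f$ is centred and that the row condition holds trivially, so that the failure at degree $k+1$ comes from $\gamma$ alone.
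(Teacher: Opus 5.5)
Your proof is correct and takes essentially the same route as the paper: (a) is the first statement of \Cref{lem:poly_form}, with the diagonal clause spelled out via \Cref{lem:variances}, which the paper leaves implicit. Part (b) is the fact that $(t-1)^{n}$ cannot divide a nonzero polynomial of degree at most $n-1$ (Vandermonde invertibility), and (c) follows from $\rho(f)=0$ and $P_{\gamma(f)}(t)=(1-t)^{k+1}$, where your checks that $f$ is centred and that $k+1\le n-1$ supply details the paper's one-line proof omits.
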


\begin{proof}
(a) is the first statement of \Cref{lem:poly_form}.  (b) A nonzero polynomial of degree at most $n-1$ is not divisible by $(t-1)^{n}$.  (c) Here $\rho(f)=0$ and $P_{\gamma(f)}(t)=(1-t)^{k+1}$.
\end{proof}

Part (a) contains the vanishing of the centre of mass $\sum_cz_c(x_c,y_c)$ and of all axial moments $\sum_cz_cx_c^m$, $\sum_cz_cy_c^m$ of a semi-magic square, in the spirit of the multipole analysis of a magic cube by Rogers and Loly \cite{rogers2005multipole}.
Parts (b) and (c) are the invertibility of a Vandermonde matrix.
The mixed moments are not determined by the line sums:

\begin{remark}[Mixed moments]
\label{rem:product_inertia}
The magic square with rows $(1,6,11,16)$, $(8,15,2,9)$, $(12,3,14,5)$, $(13,10,7,4)$ (square $\#45$ in the Fr\'enicle numbering of \cite{mathewson_gems}) has mixed moment $\sum_cz_cx_cy_c=60$ (Loly's product of inertia).
Among the $880$ magic squares of order $4$ up to $D_4$, the mixed moment $\sum_cz_cx_cy_c$ is nonzero for $824$ and takes $48$ distinct absolute values between $0$ and $60$, and the mixed moments $\sum_cz_cx_c^2y_c$, $\sum_cz_cx_cy_c^2$ are nonzero for $623$ and $773$ of them.
For $n=3$ the mixed moment $\sum_cz_cx_cy_c$ vanishes for all eight magic squares, because $s_c+s_{-c}=10$ for centrally opposite cells, so $z$ is odd under $(x,y)\mapsto(-x,-y)$ while $xy$ is even.
\end{remark}

\begin{corollary}[Loly's invariant {\cite{loly2004invariance}}]
\label{cor:loly}
Let $I(S)=\sum_{i,j}s_{ij}(x_{j}^{2}+y_{i}^{2})$ be the moment of inertia about the centre of the masses $s_{ij}$ placed at $(x_j,y_i)$.  For every arrangement,
\begin{equation}
I(S)=N\,\langle x^{2}+y^{2},z\rangle+\frac{n^{2}(n^{4}-1)}{12}
=\sum_j x_j^{2}\gamma_j+\sum_i y_i^{2}\rho_i+\frac{n^{2}(n^{4}-1)}{12}.
\label{eq:loly_identity}
\end{equation}
Hence $I(S)=n^{2}(n^{4}-1)/12$ if and only if the single condition $\langle x^2+y^2,z\rangle=0$ holds; it is implied by the degree-$2$ conditions, hence by the semi-magic property.
\end{corollary}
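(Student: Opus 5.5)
The identity comes from expanding $s_{ij}=z_{ij}+\frac{N+1}{2}$ in the definition of $I(S)$ and treating the two resulting pieces separately. The $z$-part gives
\[
\sum_{i,j}z_{ij}(x_j^2+y_i^2)=N\langle x^2+y^2,z\rangle .
\]
The constant part gives
\[
\frac{N+1}{2}\sum_{i,j}(x_j^2+y_i^2)=\frac{N+1}{2}\cdot 2N\|x\|^2 .
\]
By \Cref{lem:variances}, $\|x\|^2=\|y\|^2=(n^2-1)/12$. Hence the constant part equals
\[
(N+1)N\,\frac{n^2-1}{12}=\frac{n^2(n^2+1)(n^2-1)}{12}=\frac{n^2(n^4-1)}{12},
\]
which is the first equality in \eqref{eq:loly_identity}.

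For the second equality, apply the first statement of \Cref{lem:poly_form} to $\varphi=x^2$, which depends only on the column index. It gives $N\langle x^2,z\rangle=\sum_jx_j^2\gamma_j$. The same statement for rows, applied to $\varphi=y^2$, gives $N\langle y^2,z\rangle=\sum_iy_i^2\rho_i$. Adding the two yields the middle expression.

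The characterisation is then immediate: $I(S)$ equals the constant $n^2(n^4-1)/12$ exactly when $N\langle x^2+y^2,z\rangle=0$. This is a single linear condition on $z$. The degree-$2$ conditions of \Cref{def:degree_k} include $\langle x^2,z\rangle=\langle y^2,z\rangle=0$, so they imply it. By \Cref{prop:hierarchy}(a), or simply because $\gamma=\rho=0$ makes the middle expression vanish, every semi-magic square satisfies all degree-$k$ conditions and hence this one.

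No step is a real obstacle. The only place to be careful is the constant term: the sum of $x_j^2+y_i^2$ over all $N$ cells is $2N\|x\|^2$ with the normalised inner product, and the product $(N+1)(N-1)$ must be recognised as $n^4-1$.
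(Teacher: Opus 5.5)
Your proposal is correct and is essentially the paper's proof: substitute $s_{ij}=z_{ij}+\frac{N+1}{2}$, evaluate the constant part as $\frac{N+1}{2}\cdot\frac{n^2(n^2-1)}{6}$ using \Cref{lem:variances}, and rewrite the $z$-part via the first statement of \Cref{lem:poly_form}. Your treatment of the consequences matches the intended argument: the single linear condition, which is implied by the degree-$2$ conditions and hence, via \Cref{prop:hierarchy}(a), by the semi-magic property.
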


\begin{proof}
Substitute $s_{ij}=z_{ij}+\frac{N+1}2$ and use \Cref{lem:variances} and \Cref{lem:poly_form}: $\frac{N+1}2\sum_{i,j}(x_j^{2}+y_i^{2})=\frac{N+1}2\cdot\frac{n^{2}(n^{2}-1)}{6}$.
\end{proof}

Loly's derivation uses the row and column sums in the same way \cite{loly2004invariance}.
The single condition is much weaker than the degree-$2$ conditions: for $n=3$, $24{,}192$ of the $9!$ arrangements have $I=60$, while $72$ (the semi-magic squares, \Cref{rem:n3_enum}) satisfy the degree-$2$ conditions.
The energy $\Efour$ of \eqref{eq:Elow} combines the degree-$1$ conditions with the two diagonal conditions: $\Efour=\langle x,z\rangle^2+\langle y,z\rangle^2+(\delta_{\mathrm m}^2+\delta_{\mathrm a}^2)/N^2$.

\subsection{The permutation threshold}
\label{sec:kstar}

By \Cref{prop:hierarchy}, for real-valued $f$ the degree-$k$ conditions force equal row and column sums exactly when $k\ge n-1$.
For arrangements the deviations are integers of bounded size, and the threshold drops.

\begin{definition}
\label{def:kstar}
For $n\ge3$, $k^*(n)$ is the least $k\ge1$ such that every arrangement of order $n$ satisfying the degree-$k$ conditions is semi-magic.
A \emph{witness} for $k^*(n)\ge K$ is an arrangement that satisfies the degree-$(K-1)$ conditions and is not semi-magic.
\end{definition}

By \Cref{prop:hierarchy}(b), $k^*(n)\le n-1$.
By \Cref{lem:poly_form}, an arrangement satisfies the degree-$k$ conditions if and only if $\gamma$ and $\rho$ lie in
\[
\Lambda_{n,k}=\{\gamma\in\Z^{n}:(t-1)^{k+1}\mid P_\gamma(t)\}=(t-1)^{k+1}\,\Z[t]_{<n-k-1}
\]
(identifying $\gamma$ with $P_\gamma$; the quotient of an integer polynomial by the monic polynomial $(t-1)^{k+1}$ has integer coefficients), a lattice of rank $n-k-1$.
Writing $\gamma=\gamma^+-\gamma^-$ with $\gamma^\pm\ge0$, the condition $\gamma\in\Lambda_{n,k}$ says that the multisets in which $j$ occurs $\gamma^+_j$, resp.\ $\gamma^-_j$, times have equal power sums of degrees $0,\dots,k$: a Prouhet--Tarry--Escott system \cite{wright1959prouhet,borwein2002computational}.
Small vectors in $\Lambda_{n,k}$ are classical: the Prouhet--Thue--Morse vector $\bigl((-1)^{s_2(j)}\bigr)_{0\le j<2^{k+1}}$ ($s_2$ the binary digit sum) has entries $\pm1$ and lies in $\Lambda_{2^{k+1},k}$ \cite{wright1959prouhet,allouche1999ubiquitous}, and Borwein, Erd\'elyi and K\'os construct polynomials of degree $n$ with coefficients in $\{-1,0,1\}$ vanishing to order $\ge c\sqrt{n/\log n}$ at $1$ \cite[Thm.~2.7]{BEK1999}; Borwein and Mossinghoff determine the least degree of such a polynomial with a zero of order $m$ at $1$ for $m\le10$ \cite{borwein2000height}.
The question is therefore how large $k$ can be for vectors that are column-deviation vectors of arrangements.

\begin{theorem}[The permutation threshold]
\label{thm:perm_threshold}
\leavevmode
\begin{enumerate}
\item[(a)] For all $n\ge3$, $\displaystyle k^{*}(n)\le\Bigl\lfloor\sqrt{2(n-1)\log(n^{4}/4)}\Bigr\rfloor\le 2\sqrt2\,\sqrt{n\log n}$.
\item[(b)] $\displaystyle\liminf_{n\to\infty}\frac{k^{*}(n)}{\sqrt n}\ge\sqrt2$, and $\displaystyle\liminf_{n\to\infty,\ 4\mid n}\frac{k^{*}(n)}{\sqrt n}\ge\sqrt6$.
\item[(c)] $k^{*}(n)=n-1$ for $3\le n\le13$, $k^{*}(n)=n-2$ for $14\le n\le17$, and $k^{*}(n)\le n-3$ for every $n\ge18$.
\item[(d)] \textup{(Computer-assisted.)} For $18\le n\le40$, $k^*(n)$ is the value given in \Cref{tab:kstar}.
\end{enumerate}
\end{theorem}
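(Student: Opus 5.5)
For (a), suppose an arrangement satisfies the degree-$k$ conditions but is not semi-magic. Then, up to replacing $S$ by its transpose, $\gamma\ne0$. By \Cref{lem:poly_form}, $P_\gamma$ is a nonzero integer polynomial of degree at most $n-1$ with a zero of order at least $k+1$ at $t=1$. Its coefficients satisfy a size bound. Column sums lie between $1+\dots+n$ and $(N-n+1)+\dots+N$, and the positive deviations sum to $\tfrac12\|\gamma\|_1$. The total positive excess is at most what the top $N/2$ values can exceed the mean by, so $\|\gamma\|_1\le 2\cdot n\cdot\frac{N}{2}\cdot\frac{n}{2}\cdot\frac1n\cdots$; I would make this precise to get $\|\gamma\|_1\le n^4/4$, e.g.\ by bounding $\sum_j|\gamma_j|\le\sum_c|z_c|\le N^2/4$.

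We then apply Schur's theorem in the Erdős–Turán form. A polynomial $\sum_{j=0}^{d}a_jt^j$ with $a_0a_d\ne0$ has at most $\sqrt{2d\log(\sum|a_j|/\sqrt{|a_0a_d|})}$ real zeros. First divide out powers of $t$, which lowers the degree and keeps $|a_0a_d|\ge1$ because the coefficients are integers. The zero at $1$ of multiplicity $k+1$ then gives $k+1\le\sqrt{2(n-1)\log(n^4/4)}$. This yields the first inequality of (a). The second is elementary, using $\log(n^4/4)\le4\log n$.

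For (b) the plan is a pigeonhole construction followed by a realisation step.

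\emph{Pigeonhole.} Consider the integer vectors $\gamma\in\{0,\dots,B\}^n$ with $B\approx c\,n$ and $\sum_j\gamma_j$ fixed. The map $\gamma\mapsto(\sum_j\binom jr\gamma_j)_{1\le r\le k}$ takes at most $\prod_r(Bn^{r+1}/r!)$ values. For $k\approx\sqrt{2n}$, two vectors collide, and their difference is a nonzero element of $\Lambda_{n,k}$ with entries bounded by $B$. The comparison is $(B+1)^n$ against roughly $\exp(k^2/2\cdot\log n+\dots)$, which is where $\sqrt2$ should come from; I would check the constant carefully.

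\emph{Realisation.} Take a semi-magic square of order $n$ with $n/2$ pairs of complementary rows, meaning $s_{i'j}=N+1-s_{ij}$; these are built in Appendix~\ref{sec:app_twin}. Swapping, within one row, two entries in columns $j,j'$ keeps that row's sum but shifts $\gamma_j,\gamma_{j'}$ by $\pm(s_{ij}-s_{ij'})$. Doing the mirror swap in the complementary row doubles or cancels this shift in a controlled way. A suitable chain of such horizontal swaps should realise any $\gamma$ in the pigeonhole set while keeping all $\rho_i=0$, so that $\rho\in\Lambda_{n,k}$ trivially. The main obstacle is this step: showing that the range of achievable differences is large enough, of order $n\cdot n$ per column, to contain all pigeonhole vectors.

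When $4\mid n$, richer structure should increase the achievable box $B$ by a factor $3$ in the exponent count, giving $\sqrt6$. That structure would be most-perfect or Franklin-type squares with quadruple complementarity.

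For (c) and (d):
\begin{itemize}
\item For $n\le13$, the explicit witnesses in Appendix~\ref{sec:app_witnesses} give $k^*\ge n-1$, and \Cref{prop:hierarchy}(b) gives equality.
\item For $k=n-2$, $\Lambda_{n,n-2}$ is spanned by $(t-1)^{n-1}$, whose central binomial coefficient exceeds the feasible column deviation once $n\ge14$. This gives $k^*(n)\le n-2$; witnesses for $14\le n\le17$ give the matching lower bound.
\item For $n\ge18$, elements $(a+bt)(t-1)^{n-2}$ of $\Lambda_{n,n-3}$ have some coefficient too large, which I would verify by a short case analysis.
\item Part (d) is lattice enumeration: compute $U(n)$ and exhibit witnesses attaining it.
\end{itemize}
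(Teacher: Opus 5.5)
Your treatment of (b), the main result, has a genuine gap at exactly the step you call the main obstacle. The gap is structural, not technical.

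You first find, by pigeonhole over an abstract box, a nonzero $\gamma\in\Lambda_{n,k}$ with small entries. You then hope to realise that arbitrary $\gamma$ by horizontal swaps. No such realisability statement is available: whether short vectors of $\Lambda_{n,k}$ are column-deviation vectors of row-balanced arrangements is essentially \Cref{conj:lattice}, which is open.

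The missing idea is to run the pigeonhole on the realisable perturbations themselves. Fix disjoint adjacent column pairs $(j_q,j_q+1)$ with $j_q=j_0+2q$. Let $F_q$ be the set of subset sums of the swap differences $d_{q,i}=s_{i,j_q+1}-s_{i,j_q}$ in the first rows of complementary pairs. Every $g\in\prod_qF_q$ is realised by disjoint swaps and gives $P_\gamma(t)=t^{j_0}(1-t)P_g(t^2)$. A collision $g\ne g'$ of the binomial moments $\sum_qg_q\binom qs$, $0\le s\le K-2$, gives $\zeta=g-g'\ne0$ with $(u-1)^{K-1}\mid P_\zeta(u)$, hence $(t-1)^K\mid P_\gamma$. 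This $\zeta$ is realised by doing the swaps for $I_q\setminus I'_q$ in the first rows and those for $I'_q\setminus I_q$ in the second rows (\Cref{prop:box}).

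That is the actual role of complementarity: the partner row supplies $-d_{q,i}$ on disjoint cells, so differences of two subset-sum choices become realisable. It never ``doubles'' a shift.

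With $Q\approx n/2$ pairs and $|F_q|\ge\lceil r_q/2\rceil+1=\Theta(n)$, the left side of the count is $e^{\frac12n\log n-O(n)}$. The right side is $e^{\frac{k^2}4\log n+O(n)}$, because $\sum_{s\le k}(s\log n-\log s!)\approx\frac{k^2}2\log\frac nk$. This comparison is where $\sqrt2$ comes from. Your count, with $\frac{k^2}2\log n$ on the right, drops the $s!$ terms and is off by a factor $2$ in the exponent.

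For $4\mid n$ the gain comes not from Franklin or most-perfect squares but from the explicit square of \Cref{prop:subset}. Its swap differences in pair $q$ are $\mp2(2qh+2r+1)$, so $|F_q|\ge qh^2/40$ and $\log\prod_q|F_q|=\frac32n\log n-O(n)$.

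In (a) you misquote Schur's theorem. In the Erd\H{o}s--Tur\'an form the bound on the number of real zeros is $\sqrt{4d\log(\cdot)}$, not $\sqrt{2d\log(\cdot)}$. Your version fails for $(t^2-1)^m$, which has degree $2m$, $2m$ real zeros and absolute coefficient sum $2^m$. Applied correctly to $P_\gamma/t^\nu$, the theorem gives only $(k+1)^2\le4(n-1)\log(n^4/4)$, a factor $\sqrt2$ too weak.

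The paper recovers this factor by applying Schur to $Q(t^2)$. That polynomial has degree $2d$, the same coefficients, and zeros of order at least $k+1$ at both $\pm1$, so $(2(k+1))^2\le8d\log\|\gamma\|_1$. Equivalently, you can use the positive-zero form \cite[Thm.~A]{BEK1999}. Your bound $\|\gamma\|_1\le\sum_c|z_c|\le N^2/4$ is correct, and simpler than the majorisation argument of \Cref{lem:col_feasible}.

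Parts (c) and (d) follow the paper. For $n\ge18$, the case analysis you defer goes as follows. Evaluating $P_\gamma=(t-1)^{n-2}(a+bt)$ at $t=-1$ gives $2^{n-2}|a-b|\le n^4/4<2^{n-2}$, which forces $a=b$. Parseval then gives $\|\gamma\|_2^2=2a^2\binom{2n-4}{n-2}/(n-1)$, which exceeds $n^5(n^2-1)/12$ unless $a=0$.
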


Part (a) is Schur's bound on real zeros applied to $P_\gamma$ (\Cref{lem:schur}) and does not use that $\gamma$ comes from an arrangement beyond the bound $\|\gamma\|_1\le n^4/4$ of \Cref{lem:col_feasible}; the constant $2\sqrt2$ is about $2.83$.
It is below $n-1$ from $n=24$ on, gives $k^*(n)\le n-3$ from $n=27$ on and improves on (c) from $n=30$ on; at $n=40$ and $n=100$ it gives $32$ and $58$, against the exact value $26$ and the computed upper bound $45$ (\Cref{fig:kstar}).
In part (c) the lower bounds rest on witness arrangements checked in exact arithmetic (printed in Appendix~\ref{sec:app_witnesses} for $n\le13$ and documented in Appendix~\ref{sec:appendix} for $14\le n\le17$), while the upper bounds are proved by hand.
Part (d) is computer-assisted: its upper bounds come from an exact lattice enumeration backed by an arithmetic certificate, and its lower bounds from witness arrangements found by computer search and checked in exact arithmetic (\Cref{sec:kstar_exact}).

\subsection{Proof of the upper bound}
\label{sec:kstar_upper}

\begin{lemma}[Majorisation bound for column deviations]
\label{lem:col_feasible}
Let $\gamma$ be the column-deviation vector of an arrangement.  For every set $J$ of $p$ columns,
\begin{equation}
\Bigl|\sum_{j\in J}\gamma_j\Bigr|\le \frac{pn(N-pn)}{2}.
\label{eq:col_feasible}
\end{equation}
In particular $|\gamma_j|\le n^{2}(n-1)/2$, $\|\gamma\|_1\le N^{2}/4$ and $\|\gamma\|_2^2\le n^5(n^2-1)/12$.  The same holds for $\rho$.
\end{lemma}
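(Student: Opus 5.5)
The bound \eqref{eq:col_feasible} follows from comparing the column sums with the extreme possible sums of $pn$ distinct entries. Fix a set $J$ of $p$ columns. The union of these columns consists of $pn$ cells, which carry $pn$ distinct values from $\{1,\dots,N\}$. Hence their total $\Sigma_J=\sum_{j\in J}\sum_i s_{ij}$ lies between the sum of the $pn$ smallest values and the sum of the $pn$ largest values:
\[
\frac{pn(pn+1)}{2}\ \le\ \Sigma_J\ \le\ \frac{pn(2N-pn+1)}{2}.
\]
Since $\sum_{j\in J}\gamma_j=\Sigma_J-pM(n)=\Sigma_J-pn\frac{N+1}{2}$, subtracting $pn(N+1)/2$ from both ends gives
\[
-\frac{pn(N-pn)}{2}\ \le\ \sum_{j\in J}\gamma_j\ \le\ \frac{pn(N-pn)}{2},
\]
which is \eqref{eq:col_feasible}. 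The same argument with rows in place of columns gives the statement for $\rho$.

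The consequences are then quick. Taking $p=1$ gives $|\gamma_j|\le n(n^2-n)/2=n^2(n-1)/2$.

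For the $\ell^1$ bound, let $J^+=\{j:\gamma_j>0\}$ and put $p=|J^+|$. Because $\sum_j\gamma_j=0$, we have $\|\gamma\|_1=2\sum_{j\in J^+}\gamma_j\le pn(N-pn)$. Writing $u=pn$, the function $u(N-u)$ is at most $N^2/4$, so $\|\gamma\|_1\le N^2/4$.

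For the $\ell^2$ bound, note that $\|\gamma\|_2^2=\sum_j\gamma_j^2$ is the variance-type sum $\sum_j(\gamma_j-\bar\gamma)^2$, since $\bar\gamma=0$. The column sums are the block sums of the entries over a partition of the $N$ values into $n$ groups of $n$. The quantity $\sum_j\gamma_j^2$ is a Schur-convex function of the column-sum vector. The majorisation inequalities just proved, for every $p$, say that this vector is majorised by the vector obtained from the sorted arrangement, in which column $j$ holds the values $jn+1,\dots,(j+1)n$. Therefore $\sum_j\gamma_j^2$ is at most its value for the sorted arrangement, where
\[
\gamma_j=n^2\Bigl(j-\frac{n-1}{2}\Bigr)=n^2x_j .
\]
By \Cref{lem:variances}, $\sum_j x_j^2=n\cdot\frac{n^2-1}{12}$, so the sorted arrangement gives
\[
\sum_j\gamma_j^2=n^4\cdot\frac{n(n^2-1)}{12}=\frac{n^5(n^2-1)}{12}.
\]

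The only step needing care is the $\ell^2$ bound. It requires checking that the partial-sum inequalities for the sorted $\gamma$ are exactly the extremal ones, so that \eqref{eq:col_feasible} holds with equality for the sorted vector at every $p$ and yields majorisation. This is a one-line observation: the $p$ largest sorted columns consist precisely of the top $pn$ values.
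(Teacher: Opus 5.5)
Your proposal is correct and follows the paper's proof essentially step for step: the extremal-sum bound on the $pn$ cells of $J$, the $\ell^1$ bound via the positive part of $\gamma$ and $u(N-u)\le N^2/4$, and majorisation plus Schur-convexity for the $\ell^2$ bound. The one cosmetic difference is in the $\ell^2$ step. You identify the majorising vector as the column-deviation vector $n^2x_j$ of the column-sorted arrangement and compute its norm via \Cref{lem:variances}, whereas the paper writes the same vector directly as $v_p=n(N-(2p-1)n)/2$.
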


\begin{proof}
The $pn$ cells of $J$ carry distinct values, whose sum is at most that of the $pn$ largest values, $pn(2N-pn+1)/2$; subtracting $pM(n)=pn(N+1)/2$ gives the upper bound, and the lower bound is symmetric.
With $p$ the number of positive $\gamma_j$, $\|\gamma\|_1=2\sum_{\gamma_j>0}\gamma_j\le pn(N-pn)\le N^2/4$.
Finally, $\gamma$ is majorised by the vector $v$ with $v_p=n(N-(2p-1)n)/2$, $1\le p\le n$ (equal totals, $\sum_pv_p=0=\sum_j\gamma_j$, and \eqref{eq:col_feasible} bounds the partial sums of $\gamma$ sorted in decreasing order by $\sum_{p\le P}v_p=Pn(N-Pn)/2$); $\|v\|_2^2=n^5(n^2-1)/12$, and $\|\cdot\|_2^2$ is Schur-convex.
\end{proof}

\begin{lemma}[Schur's bound at $t=1$]
\label{lem:schur}
Let $\gamma\in\Z^{n}\setminus\{0\}$ with $(t-1)^{K}\mid P_\gamma$ for some $K\ge1$.  Then $K^{2}\le 2(n-1)\log\|\gamma\|_1$.
\end{lemma}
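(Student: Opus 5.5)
Our approach is to reduce to Schur's theorem on the real zeros of integer polynomials \cite{schur1933,erdosturan1950}. Let $P=P_\gamma$, of degree $d\le n-1$. We may assume $\gamma_0\ne0$: if $t^r\,\|\,P$, we divide by $t^r$. This keeps the coefficient vector (so $\|\gamma\|_1$ is unchanged), does not raise the degree, and preserves the zero of order $K$ at $1$, because $t^r$ and $(t-1)^K$ are coprime.

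Now $P$ has integer coefficients, $|a_0|\ge1$ and $|a_d|\ge1$. Schur's theorem, in the Erd\H{o}s--Tur\'an form, says that if $P$ has $R$ real zeros counted with multiplicity, then
\[
R^2\le 2d\log\frac{\|P\|_1}{\sqrt{|a_0a_d|}} .
\]
Here $\|P\|_1=\sum|a_j|=\|\gamma\|_1$. Since $|a_0a_d|\ge1$ and $R\ge K$, this gives $K^2\le 2d\log\|\gamma\|_1\le 2(n-1)\log\|\gamma\|_1$, which is the claim.

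Two edge cases need comment. If $\|\gamma\|_1=1$, then $P$ is a monomial. After dividing by $t^r$ it is the constant $\pm1$, which has no zero at $1$, contradicting $K\ge1$. Hence $\|\gamma\|_1\ge2$, since $\gamma\ne0$ is integral and $P(1)=0$ forces at least two nonzero entries. Likewise $d\ge K\ge1$, so the degree is positive.

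The main obstacle is the exact form of Schur's bound. Schur's original statement carries a constant ($R^2\le 4d\log(\cdots)$ in some versions), whereas Erd\H{o}s and Tur\'an give the sharper $R^2\le 2d\log(\|P\|_1/\sqrt{|a_0a_d|})$, or an equivalent bound obtained via discrepancy of the zero angles. I would cite the Erd\H{o}s--Tur\'an version and check that their hypotheses are met: in particular, that their statement indeed uses the constant $2$ together with the $\ell^1$ norm, or a quantity dominated by it, such as $\max_{|t|=1}|P(t)|\le\|\gamma\|_1$.

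If only a version with $\max_{|t|=1}|P|$ is available, I would use $\max_{|t|=1}|P|\le\|\gamma\|_1$ and proceed the same way. Part~(a) of \Cref{thm:perm_threshold} then follows by combining this lemma with $\|\gamma\|_1\le N^2/4=n^4/4$ from \Cref{lem:col_feasible}, since a nonsemi-magic arrangement has $\gamma\ne0$ or $\rho\ne0$.
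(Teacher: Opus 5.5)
There is a genuine gap in the constant. The form of Schur's theorem you invoke, $R^2\le 2d\log(\|P\|_1/\sqrt{|a_0a_d|})$ with $R$ the number of \emph{all} real zeros, is false. For $P(t)=t^2-1$ you have $R=2$, $d=2$, $\|P\|_1=2$ and $|a_0a_d|=1$, and $4\le 4\log 2$ fails. More generally, $(t^2-1)^k$ would require $4k^2\le 4k^2\log 2$.

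The correct bound for all real zeros has constant $4$; this is Schur's theorem as quoted by Erd\H{o}s--Tur\'an in their (1.4). With it your argument only yields $K^2\le 4(n-1)\log\|\gamma\|_1$, a factor $2$ weaker than the lemma. The checks you propose (verifying hypotheses, or replacing $\|P\|_1$ by $\max_{|t|=1}|P|$) cannot recover this factor, because the problem is the constant, not the norm.

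The missing idea is that you only need zeros at the single point $t=1$, and these can be doubled at no cost. After removing the power of $t$, write $Q=P_\gamma/t^{\nu}$ with $d=\deg Q$, and consider $Q(t^2)$. It has degree $2d$ and exactly the same nonzero coefficients, so the same $\ell^1$ norm and the same $|a_0a_{2d}|\ge1$. It is divisible by $(t^2-1)^K$, so it has the real zeros $\pm1$, each of multiplicity at least $K$.

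Schur's bound with constant $4$ then gives $(2K)^2\le 4\cdot 2d\log\|\gamma\|_1$, that is, $K^2\le 2d\log\|\gamma\|_1\le 2(n-1)\log\|\gamma\|_1$. This is the paper's proof. Equivalently, you could cite the positive-zero form of Schur's theorem \cite[Thm.~A]{BEK1999}, which has constant $2$ and rests on the same squaring argument.

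The rest of your proposal is fine: the division by $t^r$, the observation that $\|\gamma\|_1\ge2$, and $d\ge K$.
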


\begin{proof}
By a theorem of Schur \cite{schur1933} (see \cite[(1.4)]{erdosturan1950}), a polynomial $f=\sum_{j=0}^ma_jt^j$ with $a_0a_m\ne0$ has at most $\bigl(4m\log\bigl(\sum_j|a_j|/\sqrt{|a_0a_m|}\bigr)\bigr)^{1/2}$ real zeros, counted with multiplicity.
Write $P_\gamma=t^{\nu}Q$ with $Q(0)\ne0$, and let $d=\deg Q\le n-1$.
The nonzero coefficients of $Q$ are the nonzero entries of $\gamma$, so the product of its constant and leading coefficients is a nonzero integer, the sum of their absolute values is $\|\gamma\|_1$, and $(t-1)^K\mid Q$.
The polynomial $Q(t^2)$ has degree $2d$, the same nonzero coefficients, and is divisible by $(t^2-1)^K$, so it has the real zeros $\pm1$, each of multiplicity at least $K$.
Hence $(2K)^2\le8d\log\|\gamma\|_1$.
\end{proof}

The same argument gives the form of Schur's theorem for positive zeros in \cite[Thm.~A]{BEK1999}.
Mignotte's lower bound $h\ge K^2/(4D)-\log D$ for the logarithmic height $h$ of a nonzero integer polynomial of degree less than $D$ with a zero of order $K$ at $1$ \cite{mignotte1982}, quoted in \cite[Sect.~1]{amoroso1995}, rests on the same theorem; with $h\le\log\max_j|\gamma_j|$ and \Cref{lem:col_feasible} it gives $K\le(4+o(1))\sqrt{n\log n}$.
The order $\sqrt{n\log n}$ also follows from \cite[Thms.~2.1 and 2.8]{BEK1999}, applied to $P_\gamma(t)/(t^{\nu}\max_j|\gamma_j|)$.

\begin{proof}[Proof of \Cref{thm:perm_threshold}(a)]
Let $S$ satisfy the degree-$k$ conditions with $(k+1)^2>2(n-1)\log(n^4/4)$.
If $\gamma\ne0$, then $(t-1)^{k+1}\mid P_\gamma$ (\Cref{lem:poly_form}) and $\|\gamma\|_1\le n^4/4$ (\Cref{lem:col_feasible}), contradicting \Cref{lem:schur}.
So $\gamma=0$, and likewise $\rho=0$.
The second inequality follows from $(n-1)\log(n^4/4)<4n\log n$.
\end{proof}

\Cref{lem:schur} uses only that $\gamma$ is a nonzero integer vector with $\|\gamma\|_1\le n^4/4$, and \Cref{lem:col_feasible} does not restrict the order of vanishing much further.
By Amoroso's upper bound \cite[Thm.~2]{amoroso1995}, obtained from the Bombieri--Vaaler form of Siegel's lemma \cite{bombieri1987low}, as $D,K\to\infty$ with $K/D\to0$ and $K\sqrt{\log D/D}\to\infty$ there are nonzero integer polynomials of degree less than $D$ with a zero of order $K$ at $1$ whose coefficients are at most $\exp\bigl((\frac14+o(1))\frac{K^2}D\log\frac DK\bigr)$ in absolute value.\footnote{We read this theorem in the author's manuscript, not in the published version. The proof there (Section~5) begins by assuming $\sqrt{D\log D}/K\to0$, which is stronger than the hypothesis of the theorem and fails when $K$ is of order $\sqrt D$, as here. What we say below about the order $\sqrt{24n}$ therefore rests on the theorem as stated, not on a proof we have checked.}
For $D=n$ and $K=\lfloor\alpha\sqrt n\rfloor$ with $\alpha<\sqrt{24}$ this is $n^{\alpha^2/8+o(1)}<n^3/4$ for large $n$; then $\|\gamma\|_1\le n^4/4$, and $|\sum_{j\in J}\gamma_j|\le pn^3/4\le pn(N-pn)/2$ for $|J|=p\le n/2$, and also for $p>n/2$ since $\sum_j\gamma_j=0$.
So for large $n$ the lattice $\Lambda_{n,k}$ with $k=(\sqrt{24}-o(1))\sqrt n$ contains nonzero vectors satisfying \eqref{eq:col_feasible}, and an upper bound for $k^*(n)$ below this order must use more than \Cref{lem:col_feasible}.
The pigeonhole principle alone, as in the proof of (b) below, gives the constant $\sqrt{12}$ in place of $\sqrt{24}$.
Between $\sqrt{24n}$ and \Cref{lem:schur} lies the gap, by a factor of order $\log(D/K)$ in the height, between the known lower and upper bounds for the least height of an integer polynomial of degree less than $D$ with a zero of order $K$ at $1$ \cite{mignotte1982,bombieri1987low,amoroso1995}; for coefficients in $\{-1,0,1\}$ the analogous gap is between the orders $\sqrt{n/\log n}$ and $\sqrt n$, and closing it ``looks quite difficult'' \cite[Sect.~2]{BEK1999}.
Arguments that use integrality only through the size of one coefficient cannot close it: rescaled, \cite[Thm.~2.3]{BEK1999} gives real polynomials of degree $n$ with constant coefficient $1$ and all coefficients at most $H\ge1$ in absolute value that have a zero of order at least $\min\{\frac16\sqrt{n(1+\log H)}-1,n\}$ at $1$; see \cite[Thms.~2.1 and 2.2]{erdelyi2016} for explicit constants in the $\ell^p$ normalisations.

\subsection{Proof of the lower bound: complementary-row swaps and a pigeonhole collision}
\label{sec:kstar_lower}

To prove a lower bound $k^*(n)\ge K$, we must construct a witness arrangement whose row sums all equal $M(n)$ (so $\rho=0$), whose column-deviation vector $\gamma$ is nonzero, and whose polynomial $P_\gamma(t)=\sum_j\gamma_jt^j$ is divisible by $(t-1)^K$. The difficulty is that $\gamma$ cannot be chosen freely in the lattice $\Lambda_{n,K-1}$: it must arise from an actual permutation of $\{1,\dots,n^2\}$ with balanced rows.

Our strategy is to start from a semi-magic square $S$ and perturb it by horizontal transpositions—swapping two entries within the same row—which automatically preserve every row sum. Specifically, we group the columns into $Q\approx n/2$ adjacent pairs $(j_q, j_q+1)$ with $j_q=j_0+2q$. Any horizontal swap between columns $j_q$ and $j_q+1$ changes the column-deviation vector by a multiple of $e_{j_q}-e_{j_q+1}$, which contributes a dipole term $\zeta_q t^{j_q}(1-t)$ to $P_\gamma(t)$. Summing over all $q<Q$, the resulting polynomial factors as
\[
P_\gamma(t)=t^{j_0}(1-t)P_\zeta(t^2),\qquad\text{where}\quad P_\zeta(u)=\sum_{q<Q}\zeta_q u^q.
\]
Thus, if we can choose the net swap amplitudes $\zeta_q$ so that the first $K-1$ binomial moments $B_s(\zeta)=\sum_q\zeta_q\binom qs$ ($0\le s\le K-2$) all vanish, then $(u-1)^{K-1}\mid P_\zeta(u)$, which implies $(t^2-1)^{K-1}\mid P_\zeta(t^2)$ and hence $(t-1)^K\mid P_\gamma(t)$.

How do we find a nonzero vector $\zeta$ of achievable swap amplitudes with $K-1$ vanishing moments? By the pigeonhole principle (in the spirit of Siegel's lemma): in each column pair $q$, we select $r_q$ rows and consider all subset sums $g_q \in F_q$ of their swap differences $d_{q,i}=s_{i,j_q+1}-s_{i,j_q}$. If the box $\prod_{q<Q} F_q$ has more elements than the number of possible moment tuples $(B_0(g),\dots,B_{K-2}(g))$, two distinct subset-sum vectors $g\ne g'$ must share the exact same moments, so their difference $\zeta = g - g'$ has $B_s(\zeta)=0$ for all $s\le K-2$.

There is just one catch: $\zeta_q = g_q - g'_q$ is a \emph{difference} of two subset sums, meaning we must apply $+d_{q,i}$ for $i\in I_q\setminus I'_q$ and $-d_{q,i}$ for $i\in I'_q\setminus I_q$ simultaneously in disjoint rows. This is why we build our starting semi-magic squares out of \emph{complementary pairs of rows}: if row $i$ and row $i'$ sum to $N+1$ in every column, then the entry difference in row $i'$ is the exact negative of the entry difference in row $i$, giving us both $+d_{q,i}$ and $-d_{q,i}$ on disjoint cells.

\begin{definition}
Rows $i\ne i'$ of an arrangement are \emph{complementary on} a set $J$ of columns if $s_{ij}+s_{i'j}=N+1$ for all $j\in J$.
\end{definition}

\begin{lemma}[Swaps in complementary rows]
\label{lem:twin}
Let $S$ be semi-magic, let rows $i,i'$ be complementary on $J$, let $j,j'\in J$ and $d=s_{ij'}-s_{ij}$.
Exchanging the entries of row $i$ in columns $j,j'$ changes the column deviations by $d(e_j-e_{j'})$, exchanging those of row $i'$ changes them by $-d(e_j-e_{j'})$, and neither changes a row sum.
Exchanges involving pairwise disjoint sets of cells can be combined, and their effects add.
\end{lemma}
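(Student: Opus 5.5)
The proof is a direct computation; the only care needed is over which rows and columns each exchange can affect. I will first treat a single exchange in row $i$. Swapping $s_{ij}$ and $s_{ij'}$ permutes the entries of row $i$, so its sum, and hence $\rho_i$, is unchanged. The entries of every other row are untouched, so their sums are unchanged too. In column $j$ the entry $s_{ij}$ is replaced by $s_{ij'}$, so $\gamma_j$ rises by $s_{ij'}-s_{ij}=d$. In column $j'$ the entry $s_{ij'}$ is replaced by $s_{ij}$, so $\gamma_{j'}$ falls by $d$. No other column is touched. The change in the column-deviation vector is therefore $d(e_j-e_{j'})$. Note that this step does not use that $S$ is semi-magic.

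For row $i'$ the same computation gives the change $d'(e_j-e_{j'})$ with $d'=s_{i'j'}-s_{i'j}$. Here I will use complementarity on $J$: since $j,j'\in J$, we have $s_{i'j}=N+1-s_{ij}$ and $s_{i'j'}=N+1-s_{ij'}$. Hence $d'=(N+1-s_{ij'})-(N+1-s_{ij})=-d$, which gives the claimed change $-d(e_j-e_{j'})$.

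For the combination statement, I will observe that each exchange is a transposition of the values on two cells. Transpositions acting on pairwise disjoint sets of cells commute, and each transposition sees the same original values regardless of the order in which they are applied. The resulting arrangement is therefore well defined, and it is still a bijection onto $\{1,\dots,N\}$. For any line, the total change in its sum is the sum of the changes contributed by the cells each exchange alters, because those cells are disjoint. So the row sums stay fixed and the column-deviation changes add.

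The only point needing any care is this last one: one must apply each $d$ with respect to the original entries. Disjointness is exactly what guarantees that a later exchange does not read a value already moved by an earlier one.
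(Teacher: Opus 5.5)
Your proof is correct and matches the paper's argument. The key step is the same one-line computation from complementarity, $s_{i'j'}-s_{i'j}=(N+1-s_{ij'})-(N+1-s_{ij})=-d$. The rest (row sums unchanged, effects adding over disjoint exchanges computed from the original entries) is routine bookkeeping that the paper leaves implicit and you spell out.
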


\begin{proof}
$s_{i'j}-s_{i'j'}=(N+1-s_{ij})-(N+1-s_{ij'})=d$.
\end{proof}

\begin{lemma}[Squares with many complementary rows]
\label{lem:twin_squares}
\leavevmode
\begin{enumerate}
\item If $n=2h$ with $h$ even, or with $h$ odd and $h\ge7$, there is a semi-magic square of order $n$ in which rows $r$ and $h+r$ are complementary on all columns, for $0\le r<h$.
\item If $n\ge9$ is odd, there is a semi-magic square of order $n$ in which rows $\ell$ and $n-1-\ell$ are complementary on the columns $\ell+1,\dots,n-2-\ell$, for $0\le\ell\le(n-9)/2$.
\end{enumerate}
\end{lemma}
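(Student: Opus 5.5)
Both parts are explicit constructions from smaller semi-magic squares, in the spirit of the classical ``doubling'' and ``bordering'' methods. The complementarity condition $s_{ij}+s_{i'j}=N+1$ is what drives the choices.

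\emph{Part 1 ($n=2h$).} I would look for a matrix $S$ of order $n$ of the block form
\[
S=\begin{pmatrix}A\\ (N+1)J-A\end{pmatrix},
\]
where $A$ is an $h\times n$ array and $J$ is the all-ones matrix. The bottom half is then automatically complementary to the top half, row by row.

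Every column sum of such an $S$ is $h(N+1)=M(n)$, because $M(n)=n(N+1)/2$ and $n=2h$. So the column conditions are free. The rows of $(N+1)J-A$ sum to $n(N+1)-\rho_r(A)$, so they equal $M(n)$ exactly when the rows of $A$ do. It also remains to ensure that the entries of $A$ together with their complements $N+1-a$ exhaust $\{1,\dots,N\}$. That is, $A$ must use exactly one element from each pair $\{v,N+1-v\}$, $1\le v\le N/2$.

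The problem therefore reduces to choosing a sign pattern $\epsilon_v\in\{\pm1\}$ on the pair-centred values $w_v=v-\frac{N+1}2$, $v\le N/2$, and distributing the $N/2$ chosen signed values among $h$ rows of length $n$ so that every row sums to $0$.

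For $h$ even I would use a Dürer/Franklin-style pattern: group the values $1,\dots,N/2$ into consecutive blocks of four, $\{4a+1,\dots,4a+4\}$. Within a block, the signs $+,-,-,+$ applied to the centred values give sum $0$. Each row gets $n/4=h/2$ such blocks, or more simply groups of $n$ consecutive pairs with the sign pattern $+--+$ repeated, which makes each row sum zero.

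For $h$ odd, $n\equiv2\pmod4$, the quadruple trick leaves a remainder of $2$ cells per row. I would fix those with a small correction gadget, a few rows using an explicit sign/value assignment that sums to zero. This is the reason for the hypothesis $h\ge7$: the gadget needs enough room. A concrete base construction (for instance $h=7$) is then propagated by the same LUX-type adjustments as for singly-even magic squares.

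\emph{Part 2 ($n$ odd).} I would use bordering. Start with a semi-magic (even magic) core of order $9$ in the centre, with values forming a consecutive middle interval of $\{1,\dots,N\}$ shifted appropriately. Then add borders one at a time for $\ell=(n-9)/2,\dots,0$. A border of order $m$ around an order-$(m-2)$ square uses the smallest and largest remaining values, in complementary pairs $\{v,N+1-v\}$ across the top and bottom rows (and left/right columns). By construction, the top and bottom border rows are complementary on the interior columns $\ell+1,\dots,n-2-\ell$. The classical bordered-magic-square recipe then arranges the corner and side elements so that the border rows and columns reach the correct sums; it exists for every odd border order $\ge5$. Shifting the core values preserves semi-magicness with the correct constant, since adding a constant to all entries of the core changes each line of the core by the same amount. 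This compensation works exactly because every border pair contributes $N+1$ to each interior line.

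\emph{Main obstacle.} The hardest step is the singly-even case of Part 1: realising a zero-sum signed row distribution when $n\equiv2\pmod4$ while using each complementary pair exactly once. I would first try to build an explicit $h=7$ (order $14$) solution and a uniform inductive step $h\to h+2$, adding four rows' worth of blocks with the $+--+$ pattern. The explicit base cases would be collected in Appendix~\ref{sec:app_twin}.
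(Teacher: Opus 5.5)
Your reduction in Part~1 is exactly the paper's: put $A$ on top and $(N+1)J-A$ below, take one value from each complementary pair, and split the signed centred values into $h$ zero-sum rows. Your $(+,-,-,+)$ quadruples for even $h$ are essentially the paper's alternating dominoes. Part~2 is the paper's bordering argument, except that you take the border from the classical recipe instead of the paper's explicit subset-sum choice of $T_\pm$. One small correction there: with your indexing the starting core must have order $7$, not $9$, because already for $n=9$ the complementary rows $0$ and $8$ have to come from a border.

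The genuine gap is Part~1 for odd $h$. This is the only non-routine case, and you give no construction for it; the hints do not work as stated.
\begin{itemize}
\item The two leftover cells in a row can never cancel, since their $|w_v|$ differ. So every row needs a correction, not just a few.
\item For odd $h$ the $2h^2$ values $v\le N/2$ cannot be tiled by quadruples. Any sign pattern in which each pair $\{2t-1,2t\}$ gets opposite signs (which is what every tiling by zero-sum quadruples and alternating dominoes does) has total $\sum_v\epsilon_vw_v$ equal to a sum of $h^2$ terms $\pm1$. That is odd, hence nonzero, yet it must equal the sum of the $h$ row sums, i.e.\ $0$. So the correction has to use genuinely different pieces, and finding them is the whole content of this case.
\item Your step $h\to h+2$ using only $(+,-,-,+)$ blocks fails, because the two new top-half rows have length $2h+4\equiv2\pmod4$. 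It can be repaired by an exchange between those two rows, but the base case $h=7$ is still missing.
\item Conway's LUX method builds singly even magic squares and says nothing about this signed partition problem.
\end{itemize}

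The missing idea is the paper's per-row gadget. Use the weights $2\iota+1=2|w_v|$ with $0\le\iota<2h^2$, and give row $r$:
\begin{itemize}
\item the indices $4r,\dots,4r+3$ with signs $(+,+,+,-)$, contributing $16r+2$;
\item the indices $\alpha_r=5h-1-r$ and $\beta_r=5h+7r$ with signs $(+,-)$, contributing $-16r-2$;
\item $h-3$ dominoes of consecutive unused indices with alternating signs, an even number and so net $0$.
\end{itemize}
The bookkeeping then works out:
\begin{itemize}
\item the quadruples fill $[0,4h-1]$;
\item the $\alpha_r$ fill $[4h,5h-1]$;
\item consecutive $\beta_r$ leave gaps of exactly $6$ indices;
\item the tail $[12h-6,2h^2-1]$ has even length $2h^2-12h+6$, which for odd $h$ is nonnegative exactly when $h\ge7$.
\end{itemize}
Hence the unused indices form exactly $h(h-3)$ dominoes, and this is where the hypothesis $h\ge7$ really comes from.
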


Part (1) does not cover $n=6,10$; this is irrelevant for \Cref{thm:perm_threshold}(b), which concerns large $n$.
The proof is a direct construction, given in Appendix~\ref{sec:app_twin}.

\begin{proposition}[Pigeonhole criterion]
\label{prop:box}
Let $S$ be a semi-magic square of order $n$, let $Q\ge1$ and $j_0\ge0$ with $j_0+2Q\le n$, and put $j_q=j_0+2q$ for $q<Q$.
For each $q<Q$ let $r_q$ pairwise disjoint pairs of rows be complementary on the columns $j_q,j_q+1$, let $d_{q,1},\dots,d_{q,r_q}$ be the differences $s_{i,j_q+1}-s_{i,j_q}$ for the first rows $i$ of these pairs, and let $F_q$ be the set of subset sums $\sum_{i\in I}d_{q,i}$, $I\subseteq\{1,\dots,r_q\}$.
If $K\ge2$ and
\begin{equation}
\prod_{q<Q}|F_q|\ >\ \prod_{s=0}^{K-2}\Bigl((N-1)\sum_{q<Q}r_q\binom qs+1\Bigr),
\label{eq:box}
\end{equation}
then there is an arrangement with all row sums $M(n)$ and $\gamma\ne0$ that satisfies the degree-$(K-1)$ conditions; hence $k^*(n)\ge K$.
\end{proposition}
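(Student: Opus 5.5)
The plan is a pigeonhole argument on the moment map, followed by realising the resulting difference by swaps via \Cref{lem:twin}. For $g=(g_q)_{q<Q}\in\prod_{q<Q}F_q$, put $B_s(g)=\sum_{q<Q}g_q\binom qs$ for $0\le s\le K-2$. Each $d_{q,i}$ is a difference of two distinct entries, so $|d_{q,i}|\le N-1$. Hence $|g_q|\le (N-1)r_q$, and the integer $B_s(g)$ lies in an interval of length at most $2(N-1)\sum_q r_q\binom qs$.

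That count is a factor of about $2$ too large for \eqref{eq:box}, so I would refine it by translating. Write $g_q=\sum_{i\in I}d_{q,i}$ and split the $d_{q,i}$ by sign: $g_q$ then ranges between the sum of the negative $d_{q,i}$ and the sum of the positive ones. This interval has length $\sum_i|d_{q,i}|\le (N-1)r_q$. Since the coefficients $\binom qs$ are nonnegative, $B_s(g)$ lies in an integer interval of length at most $(N-1)\sum_q r_q\binom qs$, i.e.\ it takes at most $(N-1)\sum_q r_q\binom qs+1$ values. So the moment tuple $(B_0,\dots,B_{K-2})$ takes at most the right-hand side of \eqref{eq:box} values.

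Apply this to $g\mapsto(B_s(g))_s$. One subtlety is that distinct index-subset choices may give the same $g$, which is why we count the elements of $F_q$, which are values, not subsets. By \eqref{eq:box} there are $g\ne g'$ in $\prod F_q$ with the same moments. For each $q$, choose subsets $I_q,I'_q$ realising $g_q,g'_q$, and cancel common indices so that $I_q\cap I'_q=\emptyset$ without changing the difference. Then $\zeta_q=g_q-g'_q=\sum_{i\in I_q}d_{q,i}-\sum_{i\in I'_q}d_{q,i}$, with $\zeta\ne0$ and $B_s(\zeta)=0$ for $s\le K-2$.

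Now realise $\zeta$. For $i\in I_q$, swap columns $j_q,j_q+1$ in the first row of the $i$-th pair. For $i\in I'_q$, swap them in the second row, the complementary one. By \Cref{lem:twin}, each first-row swap changes $\gamma$ by $d_{q,i}(e_{j_q}-e_{j_q+1})$ up to a fixed sign convention, and each second-row swap by the negative of this. All the swaps involve disjoint cells: within a fixed $q$ the pairs are disjoint and at most one row of each pair is used, and different $q$ use disjoint column pairs. So their effects add, row sums stay $M(n)$, and the result is an arrangement with $\rho=0$ and $\gamma=\pm\sum_q\zeta_q(e_{j_q}-e_{j_q+1})$ (starting from $\gamma=0$, since $S$ is semi-magic). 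Hence
\[
P_\gamma(t)=\pm t^{j_0}(1-t)P_\zeta(t^2).
\]
By \Cref{lem:poly_form} applied to $\zeta$ (equivalence of (ii) and (iii)), $(u-1)^{K-1}\mid P_\zeta(u)$, so $(t^2-1)^{K-1}\mid P_\zeta(t^2)$ and $(t-1)^K\mid P_\gamma$. We have $\gamma\ne0$ because $\zeta\ne0$ and the vectors $e_{j_q}-e_{j_q+1}$ have disjoint supports. By \Cref{lem:poly_form} again, $\gamma$ and $\rho=0$ satisfy the degree-$(K-1)$ conditions, so the arrangement is a witness and $k^*(n)\ge K$.

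The main obstacle is the counting step: obtaining exactly $(N-1)\sum_q r_q\binom qs+1$ values rather than twice that. The fix is the range of subset sums, $\sum_i|d_{q,i}|$, together with the nonnegativity of the binomial weights. Everything else is bookkeeping of disjointness.
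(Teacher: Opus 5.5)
Your proof is correct and follows the paper's argument step for step: the spread bound $\sum_i|d_{q,i}|\le r_q(N-1)$ for $F_q$, combined with $\binom qs\ge0$, bounds the number of moment tuples; pigeonhole gives $g\ne g'$; $\zeta=g-g'$ is realised by swaps in the first rows of the pairs in $I_q\setminus I'_q$ and the second rows of those in $I'_q\setminus I_q$; and the factorisation $P_\gamma(t)=t^{j_0}(1-t)P_\zeta(t^2)$ finishes. The $\pm$ you carry is unnecessary: $d_{q,i}=s_{i,j_q+1}-s_{i,j_q}$ is exactly the $d$ of the swap lemma with $j=j_q$, $j'=j_q+1$, so the sign is $+$, although an overall sign does not affect the argument.
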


\begin{proof}
The elements of $F_q$ lie in an interval of length $\sum_i|d_{q,i}|\le r_q(N-1)$.
For $g\in\prod_qF_q$ the binomial moments $B_s(g)=\sum_qg_q\binom qs$, $0\le s\le K-2$, are integers in intervals of lengths at most $(N-1)\sum_qr_q\binom qs$, so by \eqref{eq:box} there are $g\ne g'$ with $B_s(g)=B_s(g')$ for all $s\le K-2$.
Write $g_q=\sum_{i\in I_q}d_{q,i}$ and $g'_q=\sum_{i\in I'_q}d_{q,i}$.
For every $q$, perform the exchange of \Cref{lem:twin} in the first row of each pair in $I_q\setminus I'_q$ and in the second row of each pair in $I'_q\setminus I_q$; all these exchanges involve disjoint cells.
The result has all row sums $M(n)$ and $\gamma=\sum_q \zeta_q(e_{j_q}-e_{j_q+1})$ with $\zeta=g-g'\ne0$.
Then $P_\gamma(t)=t^{j_0}(1-t)P_\zeta(t^2)$ with $P_\zeta(u)=\sum_q\zeta_qu^q\ne0$.
Since $B_s(\zeta)=0$ for $s\le K-2$, \Cref{lem:poly_form} gives $(u-1)^{K-1}\mid P_\zeta(u)$, so $(t^2-1)^{K-1}\mid P_\zeta(t^2)$ and $(t-1)^{K}\mid P_\gamma$.
With $\rho=0$, the arrangement satisfies the degree-$(K-1)$ conditions, and $\gamma\ne0$.
\end{proof}

The $d_{q,i}$ are nonzero, and at least $\lceil r_q/2\rceil$ of them have the same sign; their partial sums, together with $0$, are distinct, so $|F_q|\ge\lceil r_q/2\rceil+1$.
For the squares of \Cref{lem:twin_squares} we take $j_0=0$, $Q=n/2$ and $r_q=n/2$ for even $n$, and $j_0=1$, $Q=(n-3)/2$ and $r_q=\min(2q,\,n-4-2q,\,(n-9)/2)+1$ for odd $n$: the $q$-th pair then consists of the columns $2q+1,2q+2$, and the complementary rows $\ell,n-1-\ell$ of \Cref{lem:twin_squares}(2) cover both columns if and only if $\ell\le2q$ and $\ell\le n-4-2q$.

\begin{proof}[Proof of \Cref{thm:perm_threshold}(b), first part]
Fix $\alpha<\sqrt2$ and let $K-1=k=\lfloor\alpha\sqrt n\rfloor$; we show that \eqref{eq:box} holds, with $|F_q|$ replaced by $\lceil r_q/2\rceil+1$, for all large $n$ covered by \Cref{lem:twin_squares} (every $n\ge14$ is covered).

\emph{Right side.} $\sum_{q<Q}\binom qs=\binom Q{s+1}\le\binom n{s+1}$, $r_q\le n$ and $N\le n^2$, so each factor is at most $n^4\binom n{s+1}\le n^4\,n^{s+1}/(s+1)!$.
From $\log s!\ge s\log s-s$ and $\sum_{s\le k}s\log s\ge\int_0^kx\log x\,dx=\frac{k^2}2\log k-\frac{k^2}4$,
\[
\sum_{s=1}^{k}\log s!\ \ge\ \frac{k^2}2\log k-\frac34k^2-\frac k2 ,
\]
hence $\sum_{s=1}^{k}(s\log n-\log s!)\le\frac{k^2}2\log\frac nk+O(k^2+k\log n)$ and
\[
\log(\text{right side})\le4k\log n+\sum_{s=1}^{k}\bigl(s\log n-\log s!\bigr)\le\frac{k^2}2\log\frac nk+O(k^2+k\log n)=\frac{\alpha^2}4\,n\log n+O(n).
\]

\emph{Left side.} For even $n$ it is at least $(\lceil n/4\rceil+1)^{n/2}$, with logarithm $\frac n2\log n-O(n)$.
For odd $n$, with $Q=(n-3)/2$, $r_q=\min(2q+1,\,2(Q-q),\,Q-2)\ge\min(q+1,\,Q-q)$ once $Q\ge5$ ($n\ge13$), so $\lceil r_q/2\rceil+1\ge\min(q+1,Q-q)/2$.
Hence
\[
\log(\text{left side})\ \ge\ \sum_{q<Q}\log\min(q+1,Q-q)-Q\log2\ \ge\ 2\log\bigl(\lfloor Q/2\rfloor!\bigr)-Q\log2=\frac n2\log n-O(n).
\]
Since $\alpha^2/4<1/2$, \eqref{eq:box} holds for large $n$, and $k^*(n)\ge\lfloor\alpha\sqrt n\rfloor+1$.
\end{proof}

The odd case thus gives the same constant $\sqrt2$ as the even case, but more slowly: \eqref{eq:box} with $|F_q|\ge\lceil r_q/2\rceil+1$ first gives $k^*(n)\ge2$ at $n=14$ for even $n$ and only at $n=21$ for odd $n$, and $k^*(n)\ge29$ at $n=999$ and $n=1001$ against $k^*(1000)\ge32$.
Notice why the general bound above stops at $\sqrt{2n}$: it uses only the generic estimate $|F_q|\ge\lceil r_q/2\rceil+1=\Theta(n)$, which gives $\log\prod_{q<Q}|F_q|=\frac12n\log n-O(n)$. When $n\equiv0\pmod 4$, we can construct a semi-magic square whose swap differences $d_{q,r}$ in column pair $q$ form a shifted arithmetic progression $\mp2(2qh+2r+1)$, expanding the subset-sum set $|F_q|$ from $\Theta(n)$ to $\Theta(qn^2)$ and tripling $\log\prod_{q<Q}|F_q|$ to $\frac32n\log n-O(n)$, which boosts the threshold from $\sqrt{2n}$ to $\sqrt{6n}$.

\begin{proposition}[All subset sums]
\label{prop:subset}
Let $n=2h$ with $h\ge4$ even, and for $0\le q,r<h$ put $\theta=qh+r$ and $\epsilon_q=(-1)^q$.
\begin{enumerate}
\item[(i)] The square with $s_{r,2q}=\frac12\bigl(N+1+\epsilon_q(4\theta+1)\bigr)$, $s_{r,2q+1}=\frac12\bigl(N+1-\epsilon_q(4\theta+3)\bigr)$ and $s_{h+r,j}=N+1-s_{r,j}$ is semi-magic, and its rows $r$ and $h+r$ are complementary on all columns.
\item[(ii)] With $j_0=0$, $Q=h$ and the $r_q=h$ pairs of rows $(r,h+r)$, the sets $F_q$ of \Cref{prop:box} satisfy $|F_q|\ge qh^2/40$ for $1\le q<h$.
\item[(iii)] Hence $k^*(n)\ge(\sqrt6-o(1))\sqrt n$ as $n\to\infty$ with $4\mid n$.
\end{enumerate}
\end{proposition}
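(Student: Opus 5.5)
The plan is to check (i) by direct counting, to get (ii) from the structure of the subset sums of an arithmetic progression, and to feed (ii) into \Cref{prop:box} exactly as in the proof of the first part of \Cref{thm:perm_threshold}(b).

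\emph{Step (i).} Here $N=4h^2$ is even, so $N+1$ is odd. As $(q,r)$ ranges over $0\le q,r<h$, $\theta$ ranges over $0,\dots,h^2-1$. Hence the odd numbers $4\theta+1$ and $4\theta+3$ together run over every odd $m$ with $1\le m\le N-1$, each exactly once. The top half of the square therefore contains, for each such $m$, exactly one of $\frac12(N+1\pm m)$, and the bottom half contains the complementary value $N+1-\frac12(N+1\pm m)=\frac12(N+1\mp m)$. So the square is a bijection onto $\{\frac12(N+1\pm m)\}=\{1,\dots,N\}$.

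For the row sums, in row $r<h$ we have $s_{r,2q}+s_{r,2q+1}=N+1+\frac12\epsilon_q\bigl((4\theta+1)-(4\theta+3)\bigr)=N+1-\epsilon_q$. Since $h$ is even, $\sum_{q<h}\epsilon_q=0$, and the row sum is $h(N+1)=M(n)$. Rows $h+r$ then sum to $n(N+1)-M(n)=M(n)$. Every column sum equals $h(N+1)=M(n)$ automatically, because rows $r$ and $h+r$ are complementary by construction.

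\emph{Step (ii).} This is the main obstacle. For the first row $r$ of each pair, $d_{q,r}=s_{r,2q+1}-s_{r,2q}=-\epsilon_q(4\theta+2)=\mp2(2qh+2r+1)$, so up to a common sign and factor $2$, $F_q$ is the set of subset sums of $a_r=2qh+1+2r$, $0\le r<h$.

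A $t$-element subset has sum $t(2qh+1)+2\sigma$, where $\sigma$ is the sum of $t$ distinct elements of $\{0,\dots,h-1\}$. Such $\sigma$ take every integer value between $\binom t2$ and $\binom t2+t(h-t)$. So the $t$-subsets alone give $t(h-t)+1$ distinct sums, lying in an interval $I_t$ of length $2t(h-t)\le h^2/2$ that starts at $t(2qh+1)+t(t-1)$.

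The left endpoints of $I_t$ and $I_{t'}$ differ by at least $|t-t'|(2qh+1)$. Hence if I restrict to $t\in[h/4,3h/4]$ and choose the $t$'s at spacing about $\lceil h/(4q)\rceil$, the intervals $I_t$ are pairwise disjoint. This yields about $\min(2q,h/4)$ admissible values of $t$, each contributing at least $3h^2/16$ sums. That gives $|F_q|\ge c\min(q,h)\,h^2\ge c\,qh^2$, and I expect careful bookkeeping of floors to give $c=1/40$. If the spacing argument is too lossy for small $q$, I would instead take just two or three values of $t$ and use the trivial bound.

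\emph{Step (iii).} Fix $\alpha<\sqrt6$ and let $K-1=k=\lfloor\alpha\sqrt n\rfloor$. The right side of \eqref{eq:box} is bounded exactly as before, with logarithm at most $\frac{\alpha^2}4n\log n+O(n)$. By (ii), the left side has logarithm at least
\[
\sum_{q=1}^{h-1}\log\frac{qh^2}{40}=\log\bigl((h-1)!\bigr)+2(h-1)\log h-O(h)=3h\log h-O(h)=\tfrac32\,n\log n-O(n).
\]
Since $\alpha^2/4<3/2$, \eqref{eq:box} holds for all large $n$ with $4\mid n$. Then \Cref{prop:box} gives $k^*(n)\ge\lfloor\alpha\sqrt n\rfloor+1$, and letting $\alpha\uparrow\sqrt6$ proves (iii).
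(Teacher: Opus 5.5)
Your proposal follows the paper's proof essentially step by step. It uses the same bijection and row-sum check in (i), the same split of $F_q$ by subset size $t$ into disjoint intervals for $t\in[h/4,3h/4]$ spaced by $\lambda=\lceil h/(4q)\rceil$ in (ii), and the same logarithmic count against the right side of \eqref{eq:box} in (iii). The floor bookkeeping you left open is routine and is exactly what the paper does. Your count ``about $\min(2q,h/4)$'' is somewhat optimistic once $\lambda$ is rounded up, but the paper's bound $\lfloor(3h/4-\lceil h/4\rceil)/\lambda\rfloor+1>\frac{2h-3}{4\lambda}\ge\frac{q(2h-3)}{5h}$ gives $|F_q|>\frac{3qh(2h-3)}{80}\ge\frac{qh^2}{40}$ uniformly in $1\le q<h$, so no fallback for small $q$ is needed.
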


\begin{proof}
(i) As $\theta$ runs over $0,\dots,h^2-1$, the numbers $4\theta+1$ and $4\theta+3$ run over the odd numbers $1,\dots,N-1$, so the entries $\frac12(N+1\pm(4\theta+1))$, $\frac12(N+1\pm(4\theta+3))$ of rows $r$ and $h+r$ are $1,\dots,N$, each once.
In row $r$ the two entries in columns $2q,2q+1$ add up to $N+1-\epsilon_q$, so the row sum is $h(N+1)=M(n)$ since $h$ is even; the same holds for row $h+r$.
Each column contains $h$ pairs of entries adding up to $N+1$.

(ii) Here $d_{q,r}=s_{r,2q+1}-s_{r,2q}=-2\epsilon_q(2qh+2r+1)$, so $|F_q|$ is the number of subset sums of $A_q=\{2qh+2r+1:0\le r<h\}$.
For $0\le\kappa\le h$ the sums of the $\kappa$-element subsets of $\{0,\dots,h-1\}$ are all integers in $[\frac{\kappa(\kappa-1)}2,\frac{\kappa(\kappa-1)}2+\kappa(h-\kappa)]$ (pass from $\{0,\dots,\kappa-1\}$ to $\{h-\kappa,\dots,h-1\}$ by raising one element by $1$ at a time), so the sums of the $\kappa$-element subsets of $A_q$ form a set $E_\kappa$ of $\kappa(h-\kappa)+1$ integers in $[2qh\kappa+\kappa^2,\,2qh\kappa+\kappa^2+2\kappa(h-\kappa)]$.
Let $\lambda=\lceil h/(4q)\rceil$.
If $\kappa'\ge\kappa+\lambda$, then $\min E_{\kappa'}\ge2qh(\kappa+\lambda)+(\kappa+\lambda)^2>2qh\kappa+\frac{h^2}2+\kappa^2\ge\max E_\kappa$, since $2qh\lambda\ge h^2/2$ and $2\kappa(h-\kappa)\le h^2/2$.
So the sets $E_\kappa$ with $\kappa=\lceil h/4\rceil+i\lambda\le 3h/4$, $i\ge0$, are pairwise disjoint; each has more than $\kappa(h-\kappa)\ge3h^2/16$ elements, and their number is
\[
\Bigl\lfloor\frac{3h/4-\lceil h/4\rceil}{\lambda}\Bigr\rfloor+1>\frac{2h-3}{4\lambda}\ge\frac{q(2h-3)}{h+4q}\ge\frac{q(2h-3)}{5h},
\]
using $\lceil h/4\rceil\le(h+3)/4$, $\lambda\le(h+4q)/(4q)$ and $q<h$.
Hence $|F_q|>\frac{3h^2}{16}\cdot\frac{q(2h-3)}{5h}=\frac{3qh(2h-3)}{80}\ge\frac{qh^2}{40}$.

(iii) Fix $\alpha<\sqrt6$ and let $K-1=k=\lfloor\alpha\sqrt n\rfloor$.
By (ii) the logarithm of the left side of \eqref{eq:box} is at least $\log((h-1)!)+(h-1)\log(h^2/40)=3h\log h-O(h)=\frac32n\log n-O(n)$.
The bound for the right side in the proof of \Cref{thm:perm_threshold}(b) uses only $Q\le n$ and $r_q\le n$, so its logarithm is at most $\frac{\alpha^2}4n\log n+O(n)$.
Since $\alpha^2/4<3/2$, \eqref{eq:box} holds for large $n$, and \Cref{prop:box} gives $k^*(n)\ge\lfloor\alpha\sqrt n\rfloor+1$.
\end{proof}

\Cref{prop:subset}(iii) is the second part of \Cref{thm:perm_threshold}(b).
The proof of (i) uses only that $h$ is even, so it re-proves \Cref{lem:twin_squares}(1) for even $h$.
For $h$ even, $|F_q|$ is at least $\binom{h+1}2+1$ for every $q$, against $\lceil h/2\rceil+1$ in the count used for the first part.
The convergence is slow: with $|F_q|\ge\lceil r_q/2\rceil+1$, \eqref{eq:box} gives $k^*(n)\ge2$ at $n=14$ and $k^*(n)\ge7$ at $n=100$, and with the exact $|F_q|$ for the square of \Cref{prop:subset} it gives $k^*(40)\ge8$ and $k^*(100)\ge16$, against $k^*(40)=26$ and $k^*(100)\le45$ (\Cref{fig:kstar}).
Thus
\[
(\sqrt6-o(1))\sqrt n\le k^*(n)\le2\sqrt2\,\sqrt{n\log n}\qquad(4\mid n),
\]
with $\sqrt2$ in place of $\sqrt6$ for general $n$.

\subsection{Exact values}
\label{sec:kstar_exact}

\begin{proof}[Proof of \Cref{thm:perm_threshold}(c)]
\emph{$3\le n\le13$.}  Here $k^*(n)\le n-1$, and the lower bound needs a witness for $k^*(n)\ge n-1$, i.e.\ an arrangement satisfying the degree-$(n-2)$ conditions that is not semi-magic.  For $n=3$ and $n=4$,
\[
\begin{pmatrix}1&3&9\\8&6&5\\7&4&2\end{pmatrix}\ \ \gamma=(1,-2,1),\ \rho=(-2,4,-2);\qquad
\begin{pmatrix}14&2&5&13\\8&15&7&4\\12&3&9&10\\1&11&16&6\end{pmatrix}\ \ \gamma=(1,-3,3,-1),\ \rho=0.
\]
In both cases $P_\gamma=(1-t)^{n-1}$ and $P_\rho\in\{0,-2(1-t)^2\}$, so the degree-$(n-2)$ conditions hold.
For $5\le n\le13$ we found arrangements with $\rho=0$ and $\gamma_j=(-1)^j\binom{n-1}j$, i.e.\ $P_\gamma=(1-t)^{n-1}$ (Appendix~\ref{sec:app_witnesses}).

\emph{The lattices $\Lambda_{n,n-2}$ and $\Lambda_{n,n-3}$.}  $\Lambda_{n,n-2}$ has rank $1$ and is spanned by $v=\bigl((-1)^j\binom{n-1}j\bigr)_j$.  If $\gamma=av\ne0$ with $a\in\Z$, then $\max_j|\gamma_j|\ge\binom{n-1}{\lfloor(n-1)/2\rfloor}$, which exceeds $n^2(n-1)/2$ for $n\ge14$, contradicting \Cref{lem:col_feasible}.
(At $n=14$, $\binom{13}6=1716>1274$; passing from $n$ to $n+1$ multiplies the left side by $2$ for even $n$ and by $2n/(n+1)\ge15/8$ for odd $n\ge15$, and the right side by $(n+1)^2/(n(n-1))$, which decreases in $n$ and equals $225/182<15/8$ at $n=14$.)
So $k^*(n)\le n-2$ for $n\ge14$; at $n=13$ the bound fails ($\binom{12}6=924<1014$), and the witness has central entry $924$.

$\Lambda_{n,n-3}$ has rank $2$: $P_\gamma=(t-1)^{n-2}(a+bt)$ with $a,b\in\Z$.
At $t=-1$, $2^{n-2}|a-b|=|P_\gamma(-1)|\le\|\gamma\|_1\le N^2/4=n^4/4$ by \Cref{lem:col_feasible}; since $n^4<2^n$ for $n\ge17$, $a=b$ and $P_\gamma=a(t-1)^{n-2}(t+1)$.
By Parseval, $\|\gamma\|_2^2=a^2\sum_j\bigl(\binom{n-2}j-\binom{n-2}{j-1}\bigr)^2=2a^2\binom{2n-4}{n-2}/(n-1)$ (twice a Catalan number).
At $n=18$ this factor is $70{,}715{,}340>50{,}860{,}872=18^5(18^2-1)/12$.
From $n$ to $n+1$ the left side grows by the factor $2(2n-3)/n\ge11/3$ and the right side by $(n+1)^4(n+2)/(n^4(n-1))<3/2$.
So for $n\ge18$ the bound $\|\gamma\|_2^2\le n^5(n^2-1)/12$ of \Cref{lem:col_feasible} forces $a=0$, and $k^*(n)\le n-3$.
(At the next step $\Lambda_{n,n-4}$ has rank $3$, with $P_\gamma=(t-1)^{n-3}(a+bt+ct^2)$, where eliminating the three parameters $a,b,c$ requires more than evaluating at $t=-1$ and Parseval's identity, so from $n=18$ onward we turn to exact lattice enumeration.)

\emph{Lower bounds for $14\le n\le17$.}  For each of these $n$ we found an arrangement with all row sums $M(n)$ whose column deviations satisfy $(t-1)^{n-2}\mid P_\gamma$ and $\gamma\ne0$ (verified in exact integer arithmetic, Appendix~\ref{sec:appendix}; $\max_j|\gamma_j|$ is listed in \Cref{tab:kstar}).
\end{proof}

\begin{proof}[Proof of \Cref{thm:perm_threshold}(d)]
\emph{Upper bounds.}  If no nonzero vector of $\Lambda_{n,k}$ satisfies \eqref{eq:col_feasible} for every $p$, then $k^*(n)\le k$.
By \Cref{lem:col_feasible} only vectors with $\|\gamma\|_2^2\le n^5(n^2-1)/12$ need be examined; we enumerated them exactly with two independent implementations (Appendix~\ref{sec:appendix}).
At $k=k^*(n)$ the ball contains no nonzero vector for $16$ of the $23$ values of $n$, since the Gram--Schmidt norms of the reduced basis already exceed the radius.
For $n=20,22,24,26,34,36,38$ it contains $2,2,6,6,6,2,2$ vectors.
Each violates \eqref{eq:col_feasible}, in every case but one already with $p=1$; the exception is $n=26$, where two of the six vectors need $p=2$.
The ancillary certificate (Appendix~\ref{sec:appendix}) lists the reduced bases and, for each vector, a violated instance $(p,J)$.

\emph{Lower bounds.}  For each $18\le n\le40$ we found a witness for $k^*(n)\ge K$, with $K$ the value of \Cref{tab:kstar}: an arrangement with all row sums $M(n)$ whose column deviations satisfy $(t-1)^{K}\mid P_\gamma$ and $\gamma\ne0$; each was verified in exact integer arithmetic.
\end{proof}

\begin{table}[htbp]
\centering\footnotesize
\setlength{\tabcolsep}{2.9pt}
\begin{tabular}{r|rrrrrrrrrrrrrr}
$n$ & 14&15&16&17&18&19&20&21&22&23&24&25&26&27\\ \hline
$k^{*}(n)$ & 12&13&14&15&15&16&16&17&17&18&18&19&19&20\\
$\max_j|\gamma_j|$ & 297&572&1001&2002&1430&2860&1794&3094&2210&3638&2380&4522&2736&4541\\
$n^2(n-1)/2$ & 1274&1575&1920&2312&2754&3249&3800&4410&5082&5819&6624&7500&8450&9477\\[5pt]
$n$ & 28&29&30&31&32&33&34&35&36&37&38&39&40&\\ \hline
$k^{*}(n)$ & 21&21&22&22&23&23&23&24&24&25&25&26&26&\\
$\max_j|\gamma_j|$ & 8892&6405&11737&9330&14881&11799&9177&18147&14766&24150&16515&26520&22233&\\
$n^2(n-1)/2$ & 10584&11774&13050&14415&15872&17424&19074&20825&22680&24642&26714&28899&31200&
\end{tabular}
\caption{Exact values of the permutation threshold for $14\le n\le40$ (\Cref{thm:perm_threshold}(c),(d)), the largest column deviation $\max_j|\gamma_j|$ of the witness arrangement, and the bound $n^2(n-1)/2$ of \Cref{lem:col_feasible}. The upper bounds for $18\le n\le40$ were computed by two independent exact lattice enumerations and, for $n\le33$, also by integer programming (Appendix~\ref{sec:appendix}).}
\label{tab:kstar}
\end{table}

\begin{figure}[htbp]
\centering
\includegraphics[width=\textwidth]{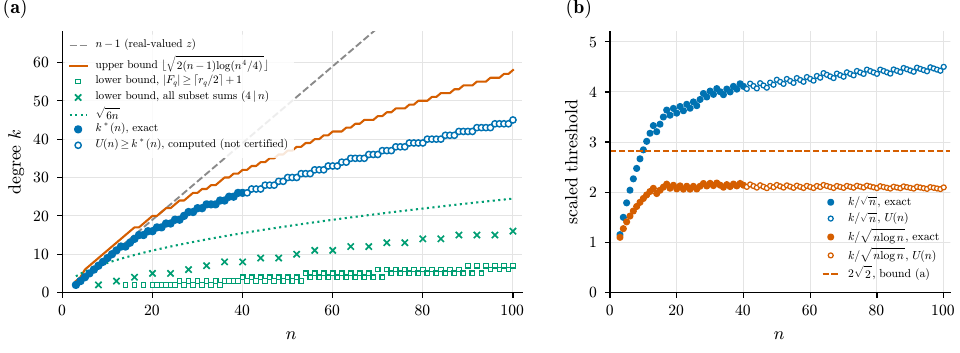}
\caption{The permutation threshold. (a) Exact values of $k^*(n)$ for $3\le n\le40$ (dots), computer-assisted upper bounds $U(n)$ for $41\le n\le100$ (two independent implementations agree, not certified; open circles), the linear-algebra threshold $n-1$, the bound of \Cref{thm:perm_threshold}(a), the lower bounds given by \eqref{eq:box} with $|F_q|\ge\lceil r_q/2\rceil+1$ (open squares, shown where they give $K\ge2$) and for $4\mid n$ with the square of \Cref{prop:subset} (crosses), and $\sqrt{6n}$, the asymptotic order of (b) for $4\mid n$. (b) The same exact values and upper bounds divided by $\sqrt n$ and by $\sqrt{n\log n}$.}
\label{fig:kstar}
\end{figure}

\paragraph{The witnesses.}
The target vectors $\gamma$ were chosen among the shortest vectors of $\Lambda_{n,k^*(n)-1}$ satisfying \eqref{eq:col_feasible}, as produced by the enumeration.
Starting from a semi-magic square of \Cref{lem:twin_squares} (or a Siamese (de la Loub\`ere) magic square \cite{andrews1960magic}), a steepest-descent search over exchanges within a row, which fix the row sums, minimised $\sum_j(\text{column sum}_j-M(n)-\gamma_j)^2$, with random exchanges to leave local minima; for the hardest cases a simulated annealing over arbitrary exchanges, with energy the squared distance of both row and column sums to their targets, was used.
Some witnesses come close to the bound of \Cref{lem:col_feasible}: $\max_j|\gamma_j|=924$ against $1014$ at $n=13$, and $24150$ against $24642$ at $n=37$.
For each $14\le n\le40$ the witness search succeeded at the last $k$ at which $\Lambda_{n,k}$ still contains a nonzero vector satisfying \eqref{eq:col_feasible}.
Let $U(n)$ be the least $k$ such that no nonzero vector of $\Lambda_{n,k}$ satisfies \eqref{eq:col_feasible} for every $J$; by \Cref{lem:col_feasible}, $k^*(n)\le U(n)$, and the proofs of (c) and (d) show $k^*(n)=U(n)$ for $3\le n\le40$.

\begin{conjecture}
\label{conj:lattice}
$k^*(n)=U(n)$ for every $n\ge3$.
\end{conjecture}

We tested \Cref{conj:lattice} for $41\le n\le46$, where both implementations give $U(n)=26,27,27,27,28,28$ (not certified).
For each of these $n$ the same search, run on the vectors of $\Lambda_{n,U(n)-1}$ in the ball $\|\gamma\|_2^2\le n^5(n^2-1)/12$ that satisfy \eqref{eq:col_feasible}, found a witness (verified in exact arithmetic; Appendix~\ref{sec:appendix}).
So $k^*(n)\ge U(n)$, i.e.\ $k^*(n)\ge26,27,27,27,28,28$, holds unconditionally for $41\le n\le46$; only the upper bounds $k^*(n)\le U(n)$ are uncertified, and the conjecture holds for these $n$ if they are correct.
The hardest case was $n=42$, where the ball contains a single such vector up to sign, with slack $7$ in one instance of \eqref{eq:col_feasible} with $p=3$; it and one of the $50$ vectors for $n=46$ were realised only by a longer two-stage annealing.
Beyond $n=40$ the values $U(n)$ are computer-assisted upper bounds for $k^*(n)$ (two independent implementations agree for every $n\le100$, but the values carry no certificate); for instance $k^*(50)\le30$, $k^*(80)\le39$ and $k^*(100)\le45$.
In \Cref{fig:kstar}(b), $k^*(n)/\sqrt n$ (and $U(n)/\sqrt n$ beyond $n=40$) grows from $3.2$ at $n=14$ to $4.5$ at $n=100$, while the same values divided by $\sqrt{n\log n}$ stay between $2.06$ and $2.18$ for $20\le n\le100$.
These data do not discriminate between the orders $\sqrt n$ and $\sqrt{n\log n}$: by \Cref{sec:kstar_upper}, the unconditional pigeonhole bound already gives $U(n)\ge(\sqrt{12}-o(1))\sqrt n\approx3.46\sqrt n$ (and Amoroso's bound gives $\sqrt{24}\approx4.90$), so a steady rise of $U(n)/\sqrt n$ across $n\le100$ is expected even if $U(n)$ is of order $\sqrt n$.

\section{The energy of a random arrangement}
\label{sec:random_energy}

This section specialises classical results on permutation statistics.
The energies are quadratic forms $z^{\mathsf T}Az$ in a random permutation $z$ of fixed values, i.e.\ quadratic assignment statistics, a form that goes back to Daniels's generalised correlation coefficient \cite{daniels1944}; Bloemena \cite{bloemena1964} proved asymptotic normality for the number of edges of a graph joining randomly sampled points, and Mantel \cite{mantel1967} and Hubert \cite{hubert1987assignment} give the general mean and variance.
What is specific here is the evaluation of the invariants for the energies, with its dependence on the parity of $n$ (\Cref{thm:var_closed}).

Let $S$ be a uniformly random arrangement: $z_c=b_{\pi(c)}$ with $\pi$ uniform and $b$ the centred values $b_v=v-\frac{N+1}2$, $1\le v\le N$.
For a test function $\varphi$ put $T_\varphi=\langle\varphi,z\rangle=\sum_ca_cz_c$ with $a_c=\tilde\varphi_c/N$, a linear permutation statistic (\Cref{sec:bg_perm}); recall that $\tilde\varphi=\varphi-\bar\varphi\mathbf 1$ and $\sigma_n^2=(n^2+1)/12$ \eqref{eq:sigma}.
Throughout this section $\E$, $\Var$ and $\Cov$ refer to the random arrangement.

\subsection{Means}

\begin{proposition}[Means and covariance structure]
\label{prop:random_energy_moments}
For all test functions $\varphi,\psi$, $\E[T_\varphi]=0$ and $\E[T_\varphi T_\psi]=\sigma_n^2\langle\tilde\varphi,\tilde\psi\rangle$.
Consequently
\begin{gather}
\E\,\langle x,z\rangle^2=\E\,\langle y,z\rangle^2=\frac{n^4-1}{144},\qquad
\E\,\langle\mathbf 1_L,z\rangle^2=\frac{(n-1)(n^2+1)}{12\,n^2}\ \text{ for every line }L,
\label{eq:single_terms}\\
\E[\Efour]=\frac{(n-1)(n^2+1)(n^3+n^2+12)}{72\,n^2},\qquad
\E[\Efull]=\frac{n^4-1}{6\,n^2}.
\label{eq:mean_low}
\end{gather}
The covariance matrix of the vector of statistics in $\Efour$ (resp.\ $\Efull$) is $\sigma_n^2\Gamma$, with $\Gamma=(\langle\tilde\varphi_\ell,\tilde\varphi_{\ell'}\rangle)_{\ell,\ell'}$ the Gram matrix of the centred test functions $\varphi_\ell$ of the energy; its eigenvalues are
\begin{equation}
\Efour:\ \sigma_n^2\Bigl\{\tfrac{n^2-1}{12}\,(\times2),\ \tfrac{n-1}{n^2}\,(\times2)\Bigr\},
\qquad
\Efull:\ \sigma_n^2\Bigl\{\tfrac1n\,(\times\,2n-2),\ \tfrac{n-1}{n^2}\,(\times2),\ 0\,(\times2)\Bigr\}
\label{eq:weights}
\end{equation}
for odd $n$; for even $n$ the pair $\{\tfrac{n-1}{n^2},\tfrac{n-1}{n^2}\}$ is replaced by $\{\tfrac1n,\tfrac{n-2}{n^2}\}$.
\end{proposition}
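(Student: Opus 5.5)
The plan is to reduce everything to the Wald--Wolfowitz identity \eqref{eq:ww_identity} and then evaluate a few inner products of line indicators.

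\emph{Step 1: general means and covariances.} Write $T_\varphi=\sum_ca_cz_c$ with $a_c=\tilde\varphi_c/N$. Since $\bar b=0$ we get $\E T_\varphi=N\bar a\bar b=0$. By \eqref{eq:ww_identity}, using $\bar a=0$ and $\sum_vb_v^2=N\|z\|^2$ (\Cref{lem:variances}),
\[
\E[T_\varphi T_\psi]=\frac{1}{N-1}\sum_c\frac{\tilde\varphi_c\tilde\psi_c}{N^2}\,N\|z\|^2=\sigma_n^2\langle\tilde\varphi,\tilde\psi\rangle ,
\]
by \eqref{eq:sigma}. This gives the covariance matrix $\sigma_n^2\Gamma$ for any energy that is a sum of squares of statistics $T_{\varphi_\ell}$. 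The one point to note is that $\langle\mathbf 1_L,z\rangle=\langle\tilde{\mathbf 1}_L,z\rangle$, because $z$ is centred.

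\emph{Step 2: single terms and means.} The function $x$ is already centred, so $\E\langle x,z\rangle^2=\sigma_n^2\|x\|^2=\frac{n^2+1}{12}\cdot\frac{n^2-1}{12}$, and the same holds for $y$. For a line, \Cref{lem:W_structure} gives $\|\tilde{\mathbf 1}_L\|^2=(n-1)/N$, which yields \eqref{eq:single_terms}. Taking expectations of \eqref{eq:Efull} and \eqref{eq:Elow} term by term then gives \eqref{eq:mean_low}:
\begin{itemize}
\item[] $\E\Efull=(2n+2)\sigma_n^2(n-1)/n^2=(n^4-1)/(6n^2)$;
\item[] $\E\Efour=2\frac{n^4-1}{144}+2\frac{(n-1)(n^2+1)}{12n^2}$, which factors as stated, since $\frac{(n-1)(n^2+1)}{72n^2}\bigl((n+1)n^2+12\bigr)$ checks out.
\end{itemize}

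\emph{Step 3: eigenvalues of $\Gamma$.} This is the only step requiring care.

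For $\Efour$, the test functions are $x$, $y$, $\mathbf 1_{D_{\mathrm m}}$ and $\mathbf 1_{D_{\mathrm a}}$. We have $x,y\in W_C\oplus W_R$, which is orthogonal to $W_D$ by \Cref{lem:W_structure}, and $\langle x,y\rangle=0$. Hence $\Gamma$ is block diagonal with blocks $\frac{n^2-1}{12}I_2$ and $\Gamma_D/N$. The eigenvalues of $\Gamma_D/N$ are $(n-1)/n^2$ (twice) for odd $n$, and $(n-2)/n^2$ and $n/n^2=1/n$ for even $n$.

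For $\Efull$, $\Gamma$ is the Gram matrix of the $2n+2$ centred line indicators, block diagonal with row, column and diagonal blocks. The row block is $N^{-1}(nI_n-J_n)$, since $N\langle\tilde{\mathbf 1}_{R_i},\tilde{\mathbf 1}_{R_{i'}}\rangle=|R_i\cap R_{i'}|-1$ equals $n-1$ on the diagonal and $-1$ off it. Its eigenvalues are $n/N=1/n$ with multiplicity $n-1$, and $0$ once (on $\mathbf 1$, reflecting $\sum_i\tilde{\mathbf 1}_{R_i}=0$). The column block behaves the same way. Together these give $1/n$ with multiplicity $2n-2$ and $0$ with multiplicity $2$, and the diagonal block contributes as before, with the even-$n$ replacement.

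Rescaling everything by $\sigma_n^2$ gives \eqref{eq:weights}. The main thing to guard against is mixing up the normalisations ($N$ versus $n$) in the Gram entries; beyond that, all steps are direct substitutions.
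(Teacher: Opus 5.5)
Your proposal is correct and follows essentially the same route as the paper. Both use the Wald--Wolfowitz identity for $\E[T_\varphi T_\psi]=\sigma_n^2\langle\tilde\varphi,\tilde\psi\rangle$ and read the Gram entries off the line-space lemma, so that $\Gamma$ is block diagonal with row and column blocks $N^{-1}(nI_n-J_n)$ and diagonal block $\Gamma_D/N$, with the means given by the traces, which you sum term by term.
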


\begin{proof}
$\sum_ca_c=0$ gives $\E T_\varphi=0$, and \eqref{eq:ww_identity} with $\sum_vb_v^2=N\|z\|^2$ gives $\E[T_\varphi T_\psi]=\frac1{N-1}\cdot\frac{\langle\tilde\varphi,\tilde\psi\rangle}{N}\cdot N\|z\|^2=\sigma_n^2\langle\tilde\varphi,\tilde\psi\rangle$.
The entries of $\Gamma$ follow from \Cref{lem:W_structure}: $\|\tilde{\mathbf 1}_L\|^2=(n-1)/n^2$, two lines meeting in $r$ cells have $\langle\tilde{\mathbf 1}_L,\tilde{\mathbf 1}_{L'}\rangle=r/N-1/n^2$, and $x$, $y$ are orthogonal to each other and to the centred diagonal indicators, since a diagonal meets every row and every column once.
The rows give the block $\frac1n(I-\frac1nJ)$, the columns likewise, and the means are the traces.
\end{proof}

The means are elementary (sampling without replacement): a line sum is the sum of a sample of size $n$ drawn without replacement from $\{1,\dots,N\}$, so $\E\delta_L^2=n\|z\|^2\frac{N-n}{N-1}$.
The two means differ only through the $x$, $y$ terms of $\Efour$ and the $2n$ row and column terms of $\Efull$, whose means are $(n^4-1)/72$ and $(n-1)(n^2+1)/(6n)$.
These agree if and only if $n(n+1)=12$, i.e.\ $n=3$, where both energies have mean $40/27$.

\begin{corollary}[Mean and variance of Loly's invariant]
\label{cor:loly_null}
For a uniformly random arrangement, $I(S)$ of \Cref{cor:loly} satisfies
\begin{equation}
\E[I(S)]=\frac{n^{2}(n^{4}-1)}{12},\qquad
\Var\bigl(I(S)\bigr)=N^{2}\sigma_n^{2}\,\bigl\|\widetilde{x^2+y^2}\bigr\|^2=\frac{n^{4}(n^{4}-1)(n^{2}-4)}{1080}.
\label{eq:loly_null}
\end{equation}
The mean equals Loly's value $n^2(n^4-1)/12$ for every random arrangement with $\E z_c=0$ for every cell (for instance any distribution invariant under complementation $s\mapsto N+1-s$); magic and semi-magic squares take this value exactly.
\end{corollary}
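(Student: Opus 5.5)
The plan is to use the identity \eqref{eq:loly_identity} of \Cref{cor:loly}, which writes $I(S)=N\langle x^2+y^2,z\rangle+n^2(n^4-1)/12$. The additive constant does not affect the variance. So everything reduces to the single linear statistic $T_\varphi=\langle\varphi,z\rangle$ with $\varphi=x^2+y^2$. By \Cref{prop:random_energy_moments}, $\E[T_\varphi]=0$, which gives the mean at once. The same proposition gives $\Var(T_\varphi)=\E[T_\varphi^2]=\sigma_n^2\|\tilde\varphi\|^2$. Hence $\Var(I(S))=N^2\sigma_n^2\|\widetilde{x^2+y^2}\|^2$, which is the first form of the variance in \eqref{eq:loly_null}.

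For the closed form I would compute $\|\widetilde{x^2+y^2}\|^2$ explicitly. Since $x$ depends only on the column and $y$ only on the row, $\tilde\varphi=\widetilde{x^2}+\widetilde{y^2}$ splits into two independent-coordinate pieces, and their inner product vanishes (the sum factorises with a zero-mean factor). Thus
\[
\|\tilde\varphi\|^2=2\Var_u(u^2),
\]
where $u$ is uniform on the $n$ centred values $j-\frac{n-1}2$.

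This requires the even moments of the discrete uniform distribution. We have $\E u^2=\frac{n^2-1}{12}$ and $\E u^4=\frac{(n^2-1)(3n^2-7)}{240}$. These give
\[
\Var(u^2)=\frac{(n^2-1)(3n^2-7)}{240}-\frac{(n^2-1)^2}{144}=\frac{(n^2-1)(n^2-4)}{180}.
\]
Multiplying out, $N^2\sigma_n^2\cdot 2\Var(u^2)=n^4\cdot\frac{n^2+1}{12}\cdot\frac{(n^2-1)(n^2-4)}{90}=\frac{n^4(n^4-1)(n^2-4)}{1080}$, as claimed. The only step needing care is the fourth-moment formula. I would verify it by a Faulhaber sum, or check it at $n=2,3$: at $n=2$ the variance vanishes, consistent with $u^2$ being constant.

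For the final remark, note that $I(S)-n^2(n^4-1)/12=\sum_c z_c(x_c^2+y_c^2)$ is linear in $z$. Its expectation therefore vanishes whenever $\E z_c=0$ for every cell. Complementation-invariance gives exactly this, because $z\mapsto -z$ under $s\mapsto N+1-s$. Semi-magic squares satisfy the degree-$2$ conditions, so \Cref{cor:loly} gives the exact value for them.
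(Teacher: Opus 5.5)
Your proposal is correct and follows essentially the same route as the paper: you reduce $I(S)$ to $N\,T_{x^2+y^2}$ via \eqref{eq:loly_identity}, apply \Cref{prop:random_energy_moments}, and evaluate $\|\widetilde{x^2+y^2}\|^2=2\bigl(\langle x^4,\mathbf 1\rangle-\langle x^2,\mathbf 1\rangle^2\bigr)$ using the orthogonality of $\widetilde{x^2}$ and $\widetilde{y^2}$ together with the second and fourth moments of the centred uniform values. You also argue the final remark explicitly (linearity in $z$, complementation forcing $\E z_c=0$, and the degree-$2$ conditions for semi-magic squares), which the paper's proof leaves implicit.
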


\begin{proof}
By \eqref{eq:loly_identity}, $I(S)-\frac{n^2(n^4-1)}{12}=N\,T_{x^2+y^2}$, so \Cref{prop:random_energy_moments} gives the mean and $\Var(I)=N^2\sigma_n^2\|\widetilde{x^2+y^2}\|^2$.
Since $x^2$ depends only on the column and $y^2$ only on the row, $\widetilde{x^2}\perp\widetilde{y^2}$, so $\|\widetilde{x^2+y^2}\|^2=2\bigl(\langle x^4,\mathbf 1\rangle-\langle x^2,\mathbf 1\rangle^2\bigr)$ with $\langle x^2,\mathbf 1\rangle=\frac{n^2-1}{12}$ and $\langle x^4,\mathbf 1\rangle=\frac{(n^2-1)(3n^2-7)}{240}$, which gives $\frac{(n^2-1)(n^2-4)}{90}$.
\end{proof}

For $n=3$, $\Var(I)=30$ and $I$ takes $29$ values from $46$ to $74$; for $n=4$, $\Var(I)=2176/3$ and $\Pr(I=340)=13049/450450\approx0.029$.

\subsection{Variances}
\label{sec:random_var}

Write a random energy as a quadratic form $E=\sum_\ell T_{\varphi_\ell}^2=z^{\mathsf T}Az$ with $A=\sum_\ell a_\ell a_\ell^{\mathsf T}$, where $a_\ell=\tilde\varphi_\ell/N$.
$A$ is symmetric with zero row sums.

\begin{proposition}[{Mantel \cite{mantel1967}, Hubert \cite{hubert1987assignment}; specialised to zero row sums and centred values}]
\label{prop:var_identity}
Let $A$ be a symmetric $N\times N$ matrix with zero row sums, $N\ge4$, and let $z$ be a uniformly random permutation of the centred values $b_v=v-\frac{N+1}2$, $1\le v\le N$.  Then $\E[z^{\mathsf T}Az]=\frac{N(N+1)}{12}\tr A$ and
\begin{equation}
\Var(z^{\mathsf T}Az)=-\frac{N(N+1)(2N+3)}{360}\,(\tr A)^2+\frac{N(N^2-1)(5N+6)}{360}\,\tr(A^2)-\frac{N^2(N+1)^2}{120}\sum_aA_{aa}^2 .
\label{eq:var_identity}
\end{equation}
\end{proposition}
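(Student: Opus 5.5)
We compute the first two moments of $z^{\mathsf T}Az=\sum_{a,c}A_{ac}b_{\pi(a)}b_{\pi(c)}$ directly by expanding over index patterns, and then use the two hypotheses to collapse the general Mantel--Hubert formula.

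\emph{Moment sums.} The only inputs about $b$ are its power sums $p_2=\sum_vb_v^2=N(N^2-1)/12$ and $p_4=\sum_vb_v^4=N(N^2-1)(3N^2-7)/240$. Odd power sums vanish because $b$ is symmetric about $0$, and $p_1=0$.

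\emph{Mean.} We have $\E[b_{\pi(a)}b_{\pi(c)}]=p_2/N$ for $a=c$ and $-p_2/(N(N-1))$ for $a\ne c$, because $p_1=0$. Since the row sums of $A$ vanish, $\sum_{a\ne c}A_{ac}=-\tr A$. Hence $\E=\frac{p_2}{N}\tr A+\frac{p_2}{N(N-1)}\tr A=\frac{p_2}{N-1}\tr A=\frac{N(N+1)}{12}\tr A$.

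\emph{Second moment.} Write $\E[(z^{\mathsf T}Az)^2]=\sum A_{ac}A_{a'c'}\,\E[b_{\pi(a)}b_{\pi(c)}b_{\pi(a')}b_{\pi(c')}]$. The expectation depends only on the set partition of the four positions $(a,c,a',c')$ by equality.

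For a partition with blocks of sizes $m_1,\dots,m_r$, it equals the injective sum $\sum_{\text{distinct }v_1,\dots,v_r}\prod b_{v_i}^{m_i}$ divided by $N(N-1)\cdots(N-r+1)$. We express these injective sums through the power sums by Möbius inversion on the partition lattice. Every block of size one then contributes a factor $p_1=0$ unless it is merged with another block. Consequently the nonzero falling-factorial moments are the single block ($p_4$), the patterns $\{2,2\}$ and $\{2,1,1\}$, and $\{1,1,1,1\}$. The last three are explicit combinations of $p_4$ and $p_2^2$; for instance the $\{1,1,1,1\}$ injective sum is $3p_2^2-6p_4$.

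On the matrix side, the sum of $A_{ac}A_{a'c'}$ over index tuples with a prescribed coincidence pattern is first computed unrestricted and then turned into a restricted sum, again by Möbius inversion. Symmetry and zero row sums reduce every unrestricted sum to one of three invariants, $(\tr A)^2$, $\tr(A^2)$ and $\sum_aA_{aa}^2$. The reason is that any free index appearing in exactly one factor sums to zero. For example, $\sum_{a,c,c'}A_{ac}A_{ac'}=0$, while $\sum_{a,c}A_{ac}^2=\tr A^2$ and $\sum_{a,a'}A_{aa}A_{a'a'}=(\tr A)^2$.

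\emph{Assembly.} Collecting terms gives $\E[(z^{\mathsf T}Az)^2]$ as a combination of the three invariants whose coefficients are rational functions of $N$. Subtracting $\bigl(\frac{N(N+1)}{12}\tr A\bigr)^2$ should leave polynomial coefficients that factor as in \eqref{eq:var_identity}. The hypothesis $N\ge4$ ensures that all falling factorials up to $N(N-1)(N-2)(N-3)$ are nonzero.

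\emph{Main obstacle.} The main obstacle is the bookkeeping of the 15 partitions of four positions, each carrying a restricted matrix sum and a restricted value sum. To keep this under control, we would instead start from Hubert's published general formula, which is stated in terms of the sums $\sum_{a\ne c}A_{ac}$, $\sum_{a\ne c}A_{ac}^2$, $\sum_a(\sum_{c\ne a}A_{ac})^2$ and the analogous quantities for $b$. Zero row sums give $\sum_{c\ne a}A_{ac}=-A_{aa}$, which rewrites these in terms of the three invariants, and $p_1=0$ with the explicit $p_2,p_4$ supplies the value side.

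As a final check, we would verify the resulting identity numerically for $A=a a^{\mathsf T}$ with $a$ centred. In that case $\Var$ must match the fourth-moment expression obtained from the Wald--Wolfowitz covariance \eqref{eq:ww_identity}. A brute-force check at $N=4$ over all $24$ permutations would pin down any arithmetic slip in the coefficients.
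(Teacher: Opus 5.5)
Your outline follows the paper's route. You expand $\E[(z^{\mathsf T}Az)^2]$ over the coincidence patterns in $\Pi_4$, M\"obius-invert, and let zero row sums kill every unrestricted matrix sum with a singleton block, leaving $(\tr A)^2$, $\tr(A^2)$ (twice) and $\sum_aA_{aa}^2$. But the value-side step contains an error that breaks the result: the $\{3,1\}$ moment does not vanish. With $p_1=0$ one has $\sum_{v\ne w}b_v^3b_w=p_3p_1-p_4=-p_4$, so $\E[z_a^3z_c]=-p_4/(N(N-1))\ne0$. The merged block survives here exactly as it does for $\{2,1,1\}$ and $\{1,1,1,1\}$, and $p_3=0$ is irrelevant. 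Each of the four $\{3,1\}$ patterns has restricted matrix sum $\sum_{a\ne c}A_{aa}A_{ac}=-\sum_aA_{aa}^2$. Together they therefore contribute $\frac{4p_4}{N(N-1)}\sum_aA_{aa}^2=\frac{(N+1)(3N^2-7)}{60}\sum_aA_{aa}^2$; in the paper's notation this is the term $-4m_{31}$ in $\theta(1234)=m_4-4m_{31}-3m_{22}+12m_{211}-6m_{1111}$. Dropping it, as your list of nonzero moments does, gives the wrong coefficient of $\sum_aA_{aa}^2$ in \eqref{eq:var_identity}.

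Beyond that, the assembly, which is the only step that actually proves the identity, is left at ``should leave''. The paper keeps it short by putting all the M\"obius inversion on the value side: $\E[(z^{\mathsf T}Az)^2]=\sum_{\nu'}\theta(\nu')S^{\ge}(\nu')$ with $\theta(\nu')=\sum_{\nu\le\nu'}\mu(\nu,\nu')m_\nu$. Only four coefficients are then needed: $\theta(12|34)=\theta(13|24)=\theta(14|23)=m_{22}-2m_{211}+m_{1111}=\frac{N(N^2-1)(5N+6)}{720}$ and $\theta(1234)=-\frac{N^2(N+1)^2}{120}$. Subtracting $\bigl(\frac{N(N+1)}{12}\tr A\bigr)^2$ finishes the proof, so the $15$-partition bookkeeping you worry about never arises.

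Your fallback via Hubert's formula is also incomplete as described. The quantities you list belong to the off-diagonal statistic $\sum_{a\ne c}A_{ac}z_az_c$. But $z^{\mathsf T}Az$ also contains $\sum_aA_{aa}z_a^2$, which is the source of the $\sum_aA_{aa}^2$ term. You would still need its variance and its covariance with the off-diagonal part, and that covariance again involves $\E[z_a^3z_c]$. The brute-force check at $N=4$ would expose the missing term, but it cannot replace the computation.
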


\begin{proof}
Because $\sum_vb_v=0$, the power sums $p_r=\sum_vb_v^r$ satisfy $\E[z_a^2]=p_2/N$ and $\E[z_az_b]=-p_2/(N)_2$ for $a\ne b$, where $(N)_k=N(N-1)\cdots(N-k+1)$. Since each row of $A$ sums to zero, $\sum_{a\ne b}A_{ab}=-\tr A$, giving $\E[z^{\mathsf T}Az]=\frac{p_2}{N-1}\tr A=\frac{N(N+1)}{12}\tr A$.

To compute the second moment $\E[(z^{\mathsf T}Az)^2]=\sum_{a,b,c,d}A_{ab}A_{cd}\E[z_az_bz_cz_d]$, observe that the expectation $m_\nu=\E[z_az_bz_cz_d]$ depends only on the equality pattern of the $4$-tuple $(a,b,c,d)$, which is a set partition $\nu$ of $\{1,2,3,4\}$. Let $S^=(\nu)$ be the restricted sum of $A_{ab}A_{cd}$ over tuples with exact equality pattern $\nu$, and let $S^\ge(\nu')=\sum_{\nu\ge\nu'}S^=(\nu)$ be the \emph{unrestricted} sum over all tuples satisfying at least the equalities of $\nu'$. By M\"obius inversion on the partition lattice $\Pi_4$,
\[
\E[(z^{\mathsf T}Az)^2]=\sum_{\nu\in\Pi_4}m_\nu S^=(\nu)=\sum_{\nu'\in\Pi_4}\theta(\nu')S^\ge(\nu'),\qquad\text{where}\quad \theta(\nu')=\sum_{\nu\le\nu'}\mu(\nu,\nu')m_\nu.
\]
Here the zero-row-sum hypothesis produces a dramatic simplification: if a partition $\nu'$ contains any singleton block, the corresponding cell index appears in only one factor of $A_{ab}A_{cd}$ and is summed freely over all cells, yielding a row sum of $A$, which is zero! Thus $S^\ge(\nu')=0$ for all eleven partitions with singletons. Only the four singleton-free partitions survive: the three pair partitions $12|34$, $13|24$, $14|23$—which evaluate to $(\tr A)^2$, $\tr(A^2)$, and $\tr(A^2)$—and the single-block partition $1234$, which evaluates to $\sum_a A_{aa}^2$. Substituting the M\"obius coefficients $\theta(12|34)=m_{22}-2m_{211}+m_{1111}$ and $\theta(1234)=m_4-4m_{31}-3m_{22}+12m_{211}-6m_{1111}$ along with $p_2=N(N^2-1)/12$ and $p_4=N(N^2-1)(3N^2-7)/240$ (Appendix~\ref{sec:app_variance}) and subtracting $(\E[z^{\mathsf T}Az])^2$ yields \eqref{eq:var_identity}.
\end{proof}

For the energies $\Efour$ and $\Efull$, the three trace invariants are explicit rational functions of $n$ (Appendix~\ref{sec:app_variance}); even and odd orders $n$ give different rational expressions because the two main diagonals intersect at the centre cell if and only if $n$ is odd, yet their asymptotic expansions agree to the first three leading orders.

\begin{theorem}[Variances of the energies]
\label{thm:var_closed}
For every $n\ge3$,
\[
\Var(\Efour)=\begin{cases}\dfrac{(n-1)(n^2+1)\,V_{\mathrm o}(n)}{129600\,n^{6}}& n\text{ odd},\\[3mm]
\dfrac{(n^2+1)\,V_{\mathrm e}(n)}{129600\,n^{4}}& n\text{ even},\end{cases}\quad
\Var(\Efull)=\begin{cases}\dfrac{(n-1)(n^2+1)\,G_{\mathrm o}(n)}{180\,n^{6}}& n\text{ odd},\\[3mm]
\dfrac{(n-2)(n^2+1)\,G_{\mathrm e}(n)}{180\,n^{5}}& n\text{ even},\end{cases}
\]
where
\begin{align*}
V_{\mathrm o}&=25n^{11}+25n^{10}-82n^9-82n^8-9n^7-1209n^6+3738n^5-5862n^4\\&\qquad+2160n^3-2880n^2-4320n+4320,\\
V_{\mathrm e}&=25n^{10}-107n^8+73n^6-1200n^5+4947n^4-9600n^3+13782n^2-13680n+10080,\\
G_{\mathrm o}&=5n^6-5n^5-17n^4+11n^3-24n^2+18n+6,\qquad
G_{\mathrm e}=5n^5-12n^3+12n^2-23n+12 .
\end{align*}
For both parities,
\[
\Var(\Efour)=\frac{n^8}{5184}-\frac{41\,n^6}{64800}+O(n^4),\qquad
\Var(\Efull)=\frac{n^3}{36}-\frac{n^2}{18}-\frac{7n}{180}+O(1).
\]
\end{theorem}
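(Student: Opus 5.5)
The plan is to apply \Cref{prop:var_identity} to each energy, so everything reduces to computing the three invariants $\tr A$, $\tr(A^2)$ and $\sum_cA_{cc}^2$. Here $A=\sum_\ell a_\ell a_\ell^{\mathsf T}$ with $a_\ell=\tilde\varphi_\ell/N$. The first two are spectral. We have $\tr A=\sum_\ell\|a_\ell\|_{\R^N}^2$, and $\tr(A^2)=\sum_{\ell,\ell'}(a_\ell^{\mathsf T}a_{\ell'})^2$ is the squared Frobenius norm of the Gram matrix $(a_\ell^{\mathsf T}a_{\ell'})=\Gamma/N$. Here $\Gamma$ is the Gram matrix of \Cref{prop:random_energy_moments}, with respect to the normalised inner product, so the Euclidean inner product equals $N$ times it. Hence $\tr A=\tr\Gamma/N$ and $\tr(A^2)=\tr(\Gamma^2)/N^2$. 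These can be read off from the eigenvalues \eqref{eq:weights}, with the odd/even distinction for the diagonal pair. As a consistency check, the mean formula $\frac{N(N+1)}{12}\tr A=\sigma_n^2\tr\Gamma$ should reproduce \eqref{eq:mean_low}.

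The genuinely new work is the diagonal term $h_c=A_{cc}=\sum_\ell a_{\ell,c}^2$, which depends on the cell. For $\Efour$, write $N^2h_c=x_c^2+y_c^2+(\mathbf 1_{D_{\mathrm m}}(c)-\tfrac1n)^2+(\mathbf 1_{D_{\mathrm a}}(c)-\tfrac1n)^2$. For $\Efull$, sum $(\mathbf 1_L(c)-\frac1n)^2$ over all $2n+2$ lines. Every cell lies in exactly one row and one column, which contribute $2(1-\frac1n)^2+(2n-2)\frac1{n^2}$. The diagonal contribution depends only on whether $c$ lies on zero, one or two diagonals. So I will partition the cells into classes: off-diagonal cells, cells on exactly one diagonal ($2n$ of them for even $n$; $2n-2$ for odd $n$), and, for odd $n$, the centre cell lying on both. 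Then I sum $h_c^2$ class by class.

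For $\Efour$ the cross terms between $x_c^2+y_c^2$ and the diagonal terms require the sums of $x_c^2$ and $x_c^4$ over the diagonal cells. On $D_{\mathrm m}$ and on $D_{\mathrm a}$ one has $x_c^2=y_c^2$, and the cells run once over every column. So these sums are $n$ times the column moments $\langle x^2,\mathbf 1\rangle$ and $\langle x^4,\mathbf 1\rangle$ used in \Cref{cor:loly_null}. The centre cell has $x=y=0$, so it contributes only its diagonal part. The full-grid sums $\sum_c(x_c^2+y_c^2)^2$ follow from $\langle x^4,\mathbf 1\rangle$ and the independence of $x$ and $y$.

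Substituting the three invariants into \eqref{eq:var_identity} with $N=n^2$ and simplifying gives rational functions of $n$. I would verify the displayed factorisations, with $V_{\mathrm o},V_{\mathrm e},G_{\mathrm o},G_{\mathrm e}$, by computer algebra, and cross-check them against exact enumeration for $n=3$ (all $9!$ arrangements) and against Monte Carlo for $n=4,5$. The asymptotic expansions then follow by expanding each closed form. The leading coefficients agree across parities because the parity only enters through $O(1)$ cells and through $\epsilon_n$ in $\Gamma_D$.

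The main obstacle is bookkeeping rather than ideas. The three terms of \eqref{eq:var_identity} are each of order $N^3\cdot$(invariant) and cancel heavily; for $\Efour$ the leading $(\tr A)^2$ and $\tr(A^2)$ contributions nearly cancel the $\sum h_c^2$ term. An error in any lower-order piece of $h_c$, such as the centre cell or the $\frac1n$ centring, corrupts the lower coefficients. I would therefore carry the computation symbolically and separately for each parity, and test each case numerically at small $n$ before trusting it.
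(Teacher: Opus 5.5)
Your plan is correct and is the paper's own route: apply \Cref{prop:var_identity}, take $\tr A=\tr\Gamma/N$ and $\tr(A^2)=\tr(\Gamma^2)/N^2$ from the Gram matrix, compute $\sum_c h_c^2$ from the cell-wise form of $h_c$, and simplify with $N=n^2$ (Appendix~\ref{sec:app_variance}); the paper's identity $(\mathbf 1_L(c)-\frac1n)^2=\frac1{n^2}+(1-\frac2n)\mathbf 1_L(c)$ is a compact version of your class-by-class count. One side remark is off, though it does not affect the argument: the three terms of \eqref{eq:var_identity} do not cancel heavily. The coefficients of $\tr(A^2)$ and $\sum_c h_c^2$ are of order $N^4$, and the $\tr(A^2)$ term alone already gives the leading terms $n^8/5184$ and $n^3/36$, while the other two terms are corrections of relative order $n^{-2}$ for $\Efour$ and $n^{-1}$ for $\Efull$; the real cancellation, between the second moment and the squared mean, is already absorbed into \eqref{eq:var_identity}.
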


\begin{proof}
Substituting $N=n^2$ and the explicit trace invariants \eqref{eq:inv_low}--\eqref{eq:inv_full} of Appendix~\ref{sec:app_variance} into \Cref{prop:var_identity} yields the rational variance formulas and their Laurent expansions.
\end{proof}

At $n=3$ the formulas give $\Var(\Efour)=1532/2187$ and $\Var(\Efull)=398/2187$, the values obtained by enumerating all $9!$ arrangements.

\subsection{Limit law}

\begin{corollary}[Limit law]
\label{cor:random_energy_clt}
Let $\varphi_1=x$, $\varphi_2=y$, $\varphi_3=\mathbf 1_{D_{\mathrm m}}$, $\varphi_4=\mathbf 1_{D_{\mathrm a}}$, $T_\ell=T_{\varphi_\ell}$, and let $\tau_\ell^2=\Var T_\ell$ (the diagonal of $\sigma_n^2\Gamma$).
As $n\to\infty$, $\bigl(T_1/\tau_1,\dots,T_4/\tau_4\bigr)$ converges in distribution to a standard Gaussian vector in $\R^4$, and
\begin{equation}
\frac{\Efour}{\E[\Efour]}\ \xrightarrow{\ d\ }\ \mathrm{Exp}(1).
\label{eq:low_limit}
\end{equation}
\end{corollary}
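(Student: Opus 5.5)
The plan is to use the Cramér--Wold device together with Hoeffding's combinatorial central limit theorem \eqref{eq:hoeffding_max}, and then the continuous mapping theorem.

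\emph{Joint normality.} Fix $u\in\R^4$ and form the linear statistic $\sum_\ell u_\ell T_\ell/\tau_\ell=\sum_c a_c z_c$, where $a_c=N^{-1}\sum_\ell u_\ell\tilde\varphi_{\ell,c}/\tau_\ell$. This is a linear permutation statistic with $b_v=v-\frac{N+1}2$. On the $b$ side we have $\max_v b_v^2\sim N^2/4$ and $\sum_v b_v^2\sim N^3/12$, so $\max b^2/\sum b^2=O(1/N)$.

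On the $a$ side we need $N\max_c a_c^2/\sum_c a_c^2\to0$. By \Cref{prop:random_energy_moments}, $\sum_c a_c^2$ equals $N^{-1}\sigma_n^{-2}$ times the variance $u^{\mathsf T}Cu$ of the combination, where $C=C_n$ is the correlation matrix. The eigenvalues of $\sigma_n^2\Gamma$ in \eqref{eq:weights}, together with the orthogonality of $x,y$ to the diagonals from \Cref{lem:W_structure}, show that $C_n\to I_4$: the only off-diagonal entry is the diagonal--diagonal correlation, which is $0$ for odd $n$ and $-1/(n-1)$ for even $n$. Hence $\sum_c a_c^2\asymp|u|^2/(N\sigma_n^2)$.

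For the maximum, use $\tau_{1,2}^2\asymp n^4$ with $|\tilde x_c|\le n$, and $\tau_{3,4}^2\asymp 1/n\cdot\sigma_n^2\asymp n$ with $|\tilde{\mathbf 1}_{D,c}|\le1$. This gives
\[
\max_c a_c^2\lesssim N^{-2}\bigl(n^2/n^4+1/n\bigr)|u|^2 .
\]
The Hoeffding ratio is therefore at most
\[
N\cdot\frac{N^{-2}n^{-1}}{N^{-1}n^{-2}}\cdot O(1/N)=O(n/N)\to0 .
\]
So each linear combination is asymptotically $\mathcal N(0,u^{\mathsf T}C_nu)\to\mathcal N(0,|u|^2)$, and Cramér--Wold yields the joint Gaussian limit.

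\emph{Exponential limit.} Write
\[
\frac{\Efour}{\E\Efour}=\sum_\ell w_\ell\,(T_\ell/\tau_\ell)^2,\qquad w_\ell=\tau_\ell^2\Big/\sum_{\ell'}\tau_{\ell'}^2 .
\]
From \eqref{eq:single_terms}, $\tau_1^2=\tau_2^2=(n^4-1)/144$, while $\tau_{3}^2=\tau_4^2=O(n)$. Thus $w_1,w_2\to\tfrac12$ and $w_3,w_4\to0$.

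The continuous mapping theorem then gives convergence to $\tfrac12(G_1^2+G_2^2)$, since the terms with $\ell=3,4$ are $O_P(1)$ times weights tending to $0$, and Slutsky's lemma disposes of them. The limit $\tfrac12\chi^2_2$ is $\mathrm{Exp}(1)$.

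\emph{Main obstacle.} The only delicate point is the Hoeffding condition for linear combinations mixing the very differently scaled statistics $x$ and the diagonal indicators. This is why each component must be normalised by its own $\tau_\ell$, and why the lower bound on $\sum a_c^2$ has to come from the nondegenerate limit correlation matrix rather than from the individual terms.
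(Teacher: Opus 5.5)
Your proposal is correct and follows the paper's proof essentially step for step. The paper also uses Cram\'er--Wold and Hoeffding's condition~\eqref{eq:hoeffding_max}, takes the lower bound on $\|a\|_2^2=u^{\mathsf T}C_nu/(N\sigma_n^2)$ from $C_n\to I_4$, and uses the vanishing weight ratio $\tau_3^2/\tau_1^2\to0$ to get the limit $\tfrac12\chi^2_2=\mathrm{Exp}(1)$. One slip in wording: the requirement on the $a$ side is $\max_ca_c^2/\sum_ca_c^2\to0$, not $N\max_ca_c^2/\sum_ca_c^2\to0$ (the latter is of order $n$ here); your displayed ratio does include the factor $\max_vb_v^2/\sum_vb_v^2=O(1/N)$ and gives $O(n/N)\to0$, so the argument stands.
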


\begin{proof}
Fix $\alpha\in\R^4\setminus\{0\}$ and let $a=\sum_\ell\alpha_\ell a_\ell/\tau_\ell$ be the corresponding combination of the four vectors $a_\ell=\tilde\varphi_\ell/N$.
Then $\sum_\ell\alpha_\ell T_\ell/\tau_\ell=\sum_ca_cz_c=\sum_ca_cb_{\pi(c)}$ with $\sum_ca_c=\sum_vb_v=0$, and
\[
\biggl(N\,\frac{\max_ca_c^2\,\max_vb_v^2}{\sum_ca_c^2\,\sum_vb_v^2}\biggr)^{1/2}=\frac{\max_c|a_c|}{\|a\|_2}\cdot\frac{\max_v|b_v|}{(\sum_vb_v^2/N)^{1/2}}
\]
is the square root of the quotient in \eqref{eq:hoeffding_max}.
The second factor tends to $\sqrt3$.
For the first, $\max_c|a_c|\le\sum_\ell|\alpha_\ell|\max_c|a_{\ell,c}|/\tau_\ell$, and $\max_c|a_{\ell,c}|/\|a_\ell\|_2$ is $O(n^{-1})$ for $x$ and $y$ and $O(n^{-1/2})$ for a diagonal indicator (a centred indicator of $n$ of the $N$ cells).
Since $\Var T_\ell=\sigma_n^2N\|a_\ell\|_2^2$ by \eqref{eq:ww_identity}, $\|a_\ell\|_2/\tau_\ell=(\sigma_n\sqrt N)^{-1}$ for every $\ell$, so $\|a\|_2^2=\alpha^{\mathsf T}C_n\alpha/(N\sigma_n^2)$ with $C_n$ the correlation matrix of $(T_1,\dots,T_4)$, which tends to the identity because the only nonzero off-diagonal correlation is $\operatorname{corr}(T_3,T_4)=-1/(n-1)$ for even $n$.
Hence $\max_c|a_c|/\|a\|_2\le\sum_\ell|\alpha_\ell|\bigl(\max_c|a_{\ell,c}|/\|a_\ell\|_2\bigr)/(\alpha^{\mathsf T}C_n\alpha)^{1/2}\to0$, condition \eqref{eq:hoeffding_max} holds, \cite[Thm.~4]{hoeffding1951} with its condition~(14) gives asymptotic normality of every such combination, and the Cram\'er--Wold device gives the joint limit $\mathcal N(0,I_4)$.
In $\Efour=\tau_1^2\bigl((T_1/\tau_1)^2+(T_2/\tau_2)^2\bigr)+\tau_3^2\bigl((T_3/\tau_3)^2+(T_4/\tau_4)^2\bigr)$ (note $\tau_1=\tau_2$ and $\tau_3=\tau_4$) the diagonal weights satisfy $\tau_3^2/\tau_1^2=12/(n^2(n+1))\to0$, so $\Efour/(2\tau_1^2)\to\frac12\chi^2_2=\mathrm{Exp}(1)$; and $\E[\Efour]/(2\tau_1^2)\to1$.
\end{proof}

The diagonal terms are thus negligible in the limit: their share $\tau_3^2/(\tau_1^2+\tau_3^2)=12/(n^3+n^2+12)$ of $\E[\Efour]$ is $0.25$, $0.074$ and $0.020$ at $n=3,5,8$, and the same argument shows that $\langle x,z\rangle^2+\langle y,z\rangle^2$, divided by its mean, has the same limit $\mathrm{Exp}(1)$.
The diagonals matter in the exact moments and in \Cref{sec:lowmode_n4}.

\begin{remark}[The $\chi^2$-mixture approximation at finite $n$]
\label{rem:gauss_mixture}
The $\chi^2$-mixture approximation, which treats the statistics of an energy as jointly Gaussian, replaces the energy by $\sum_jw_j\chi^2_{1,j}$, with independent $\chi^2_1$ variables and weights $w_j$ the eigenvalues \eqref{eq:weights}; its density can be computed by inversion of the characteristic function \cite{imhof1961} (\Cref{fig:random_energy}).
It has the exact mean but variance $2\sum_jw_j^2$, and by \Cref{thm:var_closed} the ratio of the exact variance to this value is
\[
1-\frac2n-\frac{7}{5n^2}+O(n^{-3})\quad(\Efull),\qquad 1-\frac{82}{25n^2}+O(n^{-3})\quad(\Efour).
\]
The $\chi^2$-mixture variance is $2\tr\bigl((\sigma_n^2\Gamma)^2\bigr)=N^2(N+1)^2\tr(A^2)/72$.
In \eqref{eq:var_identity} the coefficient of $\tr(A^2)$ is this value times $1-(4N+6)/(5N(N+1))=1-4/(5n^2)+O(n^{-4})$, while the $(\tr A)^2$ and $\sum_aA_{aa}^2$ terms are negative—reflecting the negative fourth-cumulant correlations induced by sampling the permutation entries without replacement.
For $\Efull$ these two negative terms produce the leading variance deficit $-4/(5n)-6/(5n)=-2/n$; for $\Efour$ all three terms contribute only at order $n^{-2}$ ($-4/5$, $-4/5$, $-42/25$, total $-82/25$).
For $\Efull$ the number of statistics grows with $n$ and \Cref{cor:random_energy_clt} does not apply.
The approximation is poor for small $n$: at $n=3$ the $\chi^2$-mixture standard deviation of $\Efull$ is about twice the exact one (\Cref{tab:random_energy}).
\end{remark}

\begin{table}[htbp]
\centering
\small
\begin{tabular}{c l r r r r}
\toprule
$n$ & energy & $\E[E]$ & $\sqrt{\Var E}$ & $\chi^2$-mixture sd & variance ratio \\
\midrule
3 & $\Efour$  & $40/27$ & 0.8370 & 1.171 & 0.511 \\
3 & $\Efull$ & $40/27$ & 0.4266 & 0.869 & 0.241 \\
4 & $\Efour$  & $391/96$ & 3.1284 & 3.586 & 0.761 \\
4 & $\Efull$ & $85/32$ & 0.9074 & 1.349 & 0.453 \\
5 & $\Efour$  & $234/25$ & 8.0483 & 8.694 & 0.857 \\
5 & $\Efull$ & $104/25$ & 1.3966 & 1.867 & 0.560 \\
6 & $\Efour$  & $2035/108$ & 17.122 & 18.007 & 0.904 \\
6 & $\Efull$ & $1295/216$ & 1.9671 & 2.459 & 0.640 \\
7 & $\Efour$  & $5050/147$ & 32.174 & 33.349 & 0.931 \\
7 & $\Efull$ & $400/49$ & 2.5697 & 3.089 & 0.692 \\
8 & $\Efour$  & $22295/384$ & 55.377 & 56.888 & 0.948 \\
8 & $\Efull$ & $1365/128$ & 3.2352 & 3.777 & 0.734 \\
\bottomrule
\end{tabular}
\caption{Exact means and standard deviations of the two energies (\Cref{prop:random_energy_moments}, \Cref{thm:var_closed}), the standard deviation of the $\chi^2$-mixture approximation of \Cref{rem:gauss_mixture}, and the ratio of the exact to the $\chi^2$-mixture variance.}
\label{tab:random_energy}
\end{table}

\begin{figure}[htbp]
\centering
\includegraphics[width=0.92\textwidth]{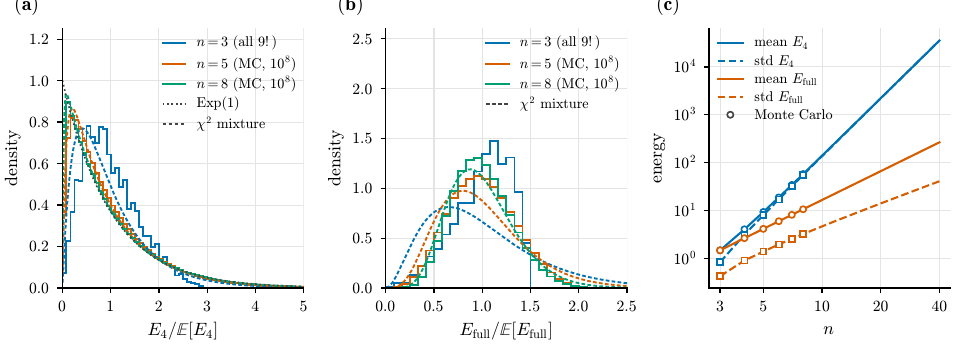}
\caption{Energy of a uniformly random arrangement. (a)~Distribution of $\Efour/\E[\Efour]$ for $n=3$ (all $9!$ arrangements) and $n=5,8$ ($10^8$ Monte Carlo samples each), with the densities of the $\chi^2$-mixture approximation (dashed) and the limit $\mathrm{Exp}(1)$ of \Cref{cor:random_energy_clt} (dotted). (b)~The same for $\Efull$, where the $\chi^2$-mixture approximation improves slowly (\Cref{rem:gauss_mixture}). (c)~Exact mean (solid) and standard deviation (dashed) of both energies for $3\le n\le40$, with Monte Carlo values for $3\le n\le8$ (open markers).}
\label{fig:random_energy}
\end{figure}

\section{Computations for small orders}
\label{sec:computations}

All counts below were computed in exact integer arithmetic; the methods are described in Appendix~\ref{sec:appendix}, and every number is produced by a script in the ancillary directory \texttt{anc/}.

\subsection{The classes \texorpdfstring{$L_3$}{L3} and \texorpdfstring{$L_4$}{L4}}
\label{sec:lowmode_n4}

Let $L_n$ be the class of arrangements whose first moments vanish and whose diagonal sums equal $M(n)$, i.e.\ the arrangements with $\Efour=0$: $\langle x,z\rangle=\langle y,z\rangle=0$ and both diagonal sums equal to $M(n)$.
With cells indexed by $(i,j)$, these are the conditions $U_1=U_2=\frac{n-1}2\cdot\frac{N(N+1)}2$ and $U_3=U_4=M(n)$ on the sums
$U_1=\sum_{i,j}j\,s_{ij}$, $U_2=\sum_{i,j}i\,s_{ij}$, $U_3=\sum_is_{ii}$, $U_4=\sum_is_{i,n-1-i}$, which are additive over cells.

\begin{proposition}
\label{prop:n3_lowmode}
For $n=3$, $\Efour(S)=0$ if and only if $S$ is magic; that is, $L_3$ consists of the eight magic squares.
\end{proposition}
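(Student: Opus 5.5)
The plan is to translate the four conditions of $\Efour=0$ into linear equations on the nine entries and then finish with a short case analysis. Write the arrangement as
\[
S=\begin{pmatrix}a&b&c\\ d&e&f\\ g&h&i\end{pmatrix}.
\]
By \Cref{lem:poly_form}, $\langle x,z\rangle=0$ reads $\gamma_2-\gamma_0=0$, since $x$ takes the values $-1,0,1$ on the three columns. Together with $\sum_j\gamma_j=0$ this gives $\gamma=\gamma_0(1,-2,1)$, and likewise $\rho=\rho_0(1,-2,1)$. So the task is to show $\gamma_0=\rho_0=0$. Once that is done, all line sums are $15$ and $S$ is magic. The converse holds by \Cref{prop:hierarchy}(a), and it is classical that there are exactly $8$ magic squares (\Cref{sec:bg_polytopes}).

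The key steps, in order, are as follows.
\begin{enumerate}
\item Use the two diagonal conditions to eliminate $i=15-e-a$ and $g=15-e-c$. The condition $\gamma_0=\gamma_2$ then becomes $2a+d=2c+f$, and $\rho_0=\rho_2$ becomes $b+2(a+c)=30-2e+h$.
\item Show that $e=5$, which I expect to be the main obstacle. The total sum is $45$ and the corners sum to $30-2e$, so the edges sum to $15+e$. Combining this with the equations of step 1, I want to show $\gamma_1=-2\gamma_0$, i.e.\ $b+e+h=15-2\gamma_0$, and similarly for $\rho_1=d+e+f$.
\item Exclude $e\ne5$ by size and parity arguments. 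Suppose $e\ge6$. The corners then sum to at most $18$ and form two disjoint pairs each summing to $15-e\le9$. Both relations $2a+d=2c+f$ and $b-h=30-2e-2(a+c)$ then have to be solved in distinct values of $\{1,\dots,9\}\setminus\{e\}$. This leaves few enough possibilities to list, and in each I expect a contradiction with distinctness. The case $e\le4$ follows by the complementation $s\mapsto10-s$, which preserves all four conditions.
\item Treat the case $e=5$. Here $a+i=c+g=10$, so $z$ is odd on the diagonals. Then $2a+d=2c+f$ together with $d,f$ being the remaining values forces $d+f=10$. Indeed, the complement pairs among the remaining four values are $\{d,f\}$ and $\{b,h\}$ up to rearrangement, and any other matching contradicts the parity of $2a+d-2c-f=0$.
\item With $d+f=10$ and $b+h=10$, conclude that $\gamma_1=\rho_1=0$, hence $\gamma_0=\rho_0=0$.
\end{enumerate}

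If steps 3--4 turn out to be messier than expected, the fallback is the finite check: the conditions $U_1=U_2=18$, $U_3=U_4=15$ can be verified over all $9!$ arrangements exactly, in the spirit of the enumerations of \Cref{sec:computations}. I would nevertheless aim for the hand argument, organised around the value of the centre $e$, where the only real work lies.
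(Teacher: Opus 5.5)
Your reduction to $\gamma=\gamma_0(1,-2,1)$, $\rho=\rho_0(1,-2,1)$ and the two relations of step~1 are correct. The argument has a genuine gap, however, exactly where you place the main obstacle.

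Step~2 yields nothing. The equation $b+e+h=15-2\gamma_0$ is just $\gamma_1=-2\gamma_0$, which you already have, so it says nothing about $e$. Step~3 then replaces the exclusion of $e\ne5$ by an enumeration that is only announced (``I expect a contradiction''), not carried out.

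The missing idea is the covering identity the paper uses. The middle row, the middle column and the two diagonals cover every cell once and the centre three more times, so $45+3e=60+\rho_1+\gamma_1$, i.e.\ $3(e-5)=-2(\gamma_0+\rho_0)$. This makes $e$ odd and ties it to the deviations. The paper then puts $e-5=-2\kappa$ and solves the linear conditions for $z$ in terms of $\kappa$, two shifted corner values and shifted deviations. It then shows that $\sum_cz_c^2=60$, the distinctness of the corners and a sums-of-two-squares check leave only $\kappa=0$ and $\gamma_0=\rho_0=0$.

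On your route, once $e$ is known to be odd, complementation leaves only $e\in\{7,9\}$. For $e=7$ the corners are $\{2,3,5,6\}$, and $b-h=16-2(a+c)\in\{\pm2,\pm6\}$ is impossible on the edge values $\{1,4,8,9\}$. For $e=9$ the two relations together fail on the edge values $\{3,6,7,8\}$.

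Step~4 is wrong as stated. The parity of $2a+d-2c-f$ is that of $d-f$. This gives no constraint when the corners use both odd complementary pairs or both even ones, since then all four edge values have the same parity. For instance, $e=5$, $a=3$, $c=1$, $i=7$, $g=9$, $d=2$, $f=6$ satisfies the diagonal conditions and $2a+d=2c+f$, yet $d+f=8$. This configuration is excluded only by the row relation, which would need $b-h=20-2(a+c)=12$.

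So for $e=5$ you must use both relations together. In centred form, with $z_a=\alpha$ and $z_c=\beta$, they read $z_d-z_f=2(\beta-\alpha)$ and $z_b-z_h=-2(\alpha+\beta)$. Compare the absolute values $2(|\alpha|+|\beta|)$ and $2\bigl||\alpha|-|\beta|\bigr|$ with the possible differences within the two remaining complementary pairs. Only $\{|\alpha|,|\beta|\}=\{1,3\}$ survives, with $\{d,f\}$ and $\{b,h\}$ complementary, which gives $\gamma_1=\rho_1=0$.

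Your fallback, the exhaustive check of all $9!$ arrangements, would settle the statement; the paper reports this enumeration in the remark after the proof. But that is a different, computational proof, not a completion of the hand argument you set out.
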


\begin{proof}
Write the centred entries $z_c=s_c-5\in\{-4,\dots,4\}$ as $\left(\begin{smallmatrix}a&b&c\\d&e&f\\g&h&i\end{smallmatrix}\right)$, so that $\sum_cz_c=0$, $\sum_cz_c^2=60$, and every line sum of $z$ is the deviation of that line.
The degree-$1$ conditions with $\sum_j\gamma_j=\sum_i\rho_i=0$ give $\gamma=(\xi,-2\xi,\xi)$ and $\rho=(\eta,-2\eta,\eta)$, and the diagonal conditions give $a+e+i=c+e+g=0$.
The middle row, the middle column and the two diagonals cover every cell once and the centre three more times, so $3e=-2(\xi+\eta)$; as $e$ is an integer, $\xi+\eta=3\kappa$ and $e=-2\kappa$ with $\kappa\in\Z$.
Put $a'=a-\kappa$, $c'=c-\kappa$, $\xi'=\xi-\kappa$, $\eta'=\eta-\kappa$, so that $\xi'+\eta'=\kappa$.
The diagonals give $i=\kappa-a'$, $g=\kappa-c'$; the first and last rows (sum $\eta$) and columns (sum $\xi$) then give
\[
z=\begin{pmatrix}\kappa+a'&-\xi'-a'-c'&\kappa+c'\\-\eta'-a'+c'&-2\kappa&-\eta'+a'-c'\\\kappa-c'&-\xi'+a'+c'&\kappa-a'\end{pmatrix},
\qquad
\sum_cz_c^2=2\bigl(4\kappa^2+3(a'^2+c'^2)+\xi'^2+\eta'^2\bigr),
\]
so $4\kappa^2+3(a'^2+c'^2)+\xi'^2+\eta'^2=30$.
The four corners are distinct, so $a'\ne0$, $c'\ne0$ and $|a'|\ne|c'|$, hence $a'^2+c'^2\ge5$; therefore $4\kappa^2+\xi'^2+\eta'^2$ is a multiple of $3$ and at most $15$, and $a'^2+c'^2=10-(4\kappa^2+\xi'^2+\eta'^2)/3$.
So $|\kappa|\le1$.
If $|\kappa|=1$, then $\xi'+\eta'=\pm1$ makes $\xi'^2+\eta'^2$ odd, so $4\kappa^2+\xi'^2+\eta'^2\in\{9,15\}$ (as it is at least $5$) and $\xi'^2+\eta'^2\in\{5,11\}$; $11$ is not a sum of two squares, and $\xi'^2+\eta'^2=5$ gives $a'^2+c'^2=7$, which is not either.
So $\kappa=0$, $\eta'=-\xi'$, and $2\xi'^2\le15$ divisible by $3$ forces $\xi'=0$.
Hence $\xi=\eta=0$: all rows, columns and diagonals have deviation $0$, and $S$ is magic.
The converse is clear.
\end{proof}

\begin{remark}
\label{rem:n3_enum}
Although the four conditions $\Efour(S)=0$ are strictly weaker than the $2n$ magic conditions for real arrays $z$, the proof above uses that the entries are distinct integers satisfying $\sum_cz_c^2=60$. Among all $9!=362{,}880$ arrangements of order $3$, exactly $760$ satisfy the degree-$1$ conditions, $4{,}224$ have both diagonal sums equal to $15$, $72$ (the semi-magic squares) satisfy the degree-$2$ conditions, and their intersection with the diagonal condition consists solely of the $8$ magic squares.
\end{remark}

For $n=4$ the equivalence fails (\Cref{rem:four_term_insufficient}).

\begin{proposition}[The size of $L_4$]
\label{prop:L4}
$|L_3|=8$ (\Cref{prop:n3_lowmode}), and
\[
|L_4|=14{,}639{,}440=8\times1{,}829{,}930,
\]
of which $7{,}040$ are magic squares.
The class $\{\langle x,z\rangle=\langle y,z\rangle=0\}$ (the arrangements whose lifted point sets in \cite{mathewson_gems} have diagonal second-moment matrix) has $6{,}697{,}177{,}424$ members for $n=4$, and the class with both diagonal sums equal to $34$ has $49{,}932{,}288{,}000$.
\end{proposition}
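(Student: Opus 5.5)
The statement is a computational count, so the plan is an exact enumeration organised as a meet-in-the-middle computation over the four additive statistics $U_1,\dots,U_4$, with the symmetry factor $8$ and the magic count used as consistency checks. Split the $16$ cells into two halves $H_1,H_2$ of $8$ cells each. A convenient choice is the top two rows versus the bottom two rows, or, better, a split balanced with respect to both diagonals: each half then contains four diagonal cells, two from each diagonal. For an arrangement $S$, each $U_\ell$ is a sum of per-cell contributions $w_\ell(c)s_c$, with $w_1(c)=j$, $w_2(c)=i$, and $w_3,w_4$ the diagonal indicators. Hence $U_\ell(S)=U_\ell^{(1)}+U_\ell^{(2)}$, a sum of contributions from the two halves.

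The enumeration proceeds as follows.
\begin{enumerate}
\item Enumerate all $\binom{16}{8}=12{,}870$ choices of the value set $A$ placed on $H_1$; its complement goes on $H_2$.
\item For each $A$, enumerate the $8!$ placements of $A$ on $H_1$ and tabulate the multiset of vectors $(U_1^{(1)},\dots,U_4^{(1)})$ in a hash table.
\item Do the same for the complementary set on $H_2$.
\item Count the matching pairs whose sums hit the targets $U_1=U_2=\tfrac32\cdot136=204$ and $U_3=U_4=34$.
\end{enumerate}
Summing over $A$ gives $|L_4|$. This costs about $12{,}870\cdot2\cdot 8!\approx10^9$ elementary operations, which is feasible.

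The same procedure gives the other two classes. Dropping $U_3,U_4$ from the key gives the count of $\{\langle x,z\rangle=\langle y,z\rangle=0\}$. Keeping only $U_3,U_4$ gives the diagonal class. The diagonal class also has an independent closed-form check: choose the ordered $4$-tuples on the two diagonals (disjoint for even $n$) with sum $34$ each, then multiply by $8!$ for the remaining cells.

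For verification I would use three checks.
\begin{itemize}
\item $D_4$ acts freely on arrangements (no nontrivial symmetry fixes a bijection onto distinct values), and it preserves $\Efour=0$ by the invariance noted after \Cref{def:energies}. So $|L_4|$ must be divisible by $8$, giving $14{,}639{,}440=8\times1{,}829{,}930$.
\item Magic squares satisfy $\Efour=0$ by \Cref{prop:hierarchy}(a). Filtering $L_4$ for semi-magicness should recover Frénicle's $7{,}040$ (\Cref{sec:bg_polytopes}). This can be done by a second pass that enumerates magic squares directly and checks inclusion.
\item $|L_3|=8$ is just \Cref{prop:n3_lowmode}.
\end{itemize}

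The main obstacle is correctness and efficiency of the meet-in-the-middle join rather than any mathematics. With four constraints the key space per half is moderate, so hashing suffices. The risk is off-by-one errors in the targets and in the centring conventions. I would therefore run two independent implementations: the half-split join, and a brute-force count restricted to a subclass, for example fixing the top-left entry, cross-checked against the Gaussian local-limit heuristic built from $\Sigma_U$. All counts would be kept in exact integers.
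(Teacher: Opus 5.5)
Your proposal is correct and follows essentially the same route as the paper: a meet-in-the-middle enumeration over the additive statistics $(U_1,\dots,U_4)$ with the top-two/bottom-two-row split ($\binom{16}{8}$ value sets and $8!$ placements per half, about $10^9$ partial vectors), cross-checked by independent implementations, with the diagonal-only class confirmed by the closed-form count $2150\cdot(4!)^2\cdot 8!=49{,}932{,}288{,}000$. The magic count $7{,}040$ needs no filtering pass, since every magic square lies in $L_4$ by \Cref{prop:hierarchy}(a); the factor $8$ then follows, as you say, from $D_4$ acting freely on arrangements and preserving $L_4$.
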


\begin{proof}[Computation]
A meet-in-the-middle enumeration over $(U_1,\dots,U_4)$, splitting the grid into the top two and bottom two rows (Appendix~\ref{sec:appendix}); the matching pairs were expanded to obtain the breakdown of \Cref{tab:L4}.
The same program reproduces the brute-force counts for $n=3$ ($8$, $760$, $4224$, and $72$ semi-magic squares) and, with the $2n$ line sums in place of $U_1,U_2$, the $7{,}040$ magic and $549{,}504$ semi-magic squares of order $4$.
Two further programs written independently (Appendix~\ref{sec:appendix}) together reproduce $|L_4|$, the first-moment count $6{,}697{,}177{,}424$ and every entry of \Cref{tab:L4}.
The diagonal-only count also follows from a direct count: there are $2150$ ordered pairs of disjoint $4$-subsets of $\{1,\dots,16\}$ with sum $34$, and $2150\cdot(4!)^2\cdot8!=49{,}932{,}288{,}000$.
\end{proof}

Since $D_4$ acts freely on the set of all arrangements (entries are distinct) and preserves $L_4$, $L_4$ consists of $1{,}829{,}930$ orbits of size $8$.

\begin{table}[htbp]
\centering
\small
\begin{tabular}{lrrrr}
\toprule
 & \multicolumn{4}{c}{columns with sum 34} \\
\cmidrule(lr){2-5}
rows with sum 34 & 0 & 1 & 4 & total \\
\midrule
0 & 9\,637\,312 & 2\,172\,824 & 100\,728 & 11\,910\,864 \\
1 & 2\,172\,824 & 429\,312 & 9\,336 & 2\,611\,472 \\
4 & 100\,728 & 9\,336 & \textbf{7\,040} (magic) & 117\,104 \\
\midrule
total & 11\,910\,864 & 2\,611\,472 & 117\,104 & 14\,639\,440 \\
\bottomrule
\end{tabular}
\caption{The members of $L_4$ by the number of rows and of columns with sum $M(4)=34$; the table is symmetric under transposition.}
\label{tab:L4}
\end{table}

Exactly two or three rows with sum $34$ are impossible in $L_4$: the row deviations satisfy $\sum_i\rho_i=0$ and $\sum_ii\rho_i=0$, which cannot hold with exactly one or two nonzero $\rho_i$; the same holds for columns.
Thus $7040/14{,}639{,}440\approx0.048\%$ of $L_4$ is magic, and $9{,}637{,}312$ members ($66\%$) have no row or column with sum $34$, among them the arrangement of \Cref{rem:four_term_insufficient}.

\paragraph{A local-limit heuristic.}
Under a uniformly random arrangement the vector $(U_1,\dots,U_4)$ has mean equal to the target vector and covariance matrix $\Sigma_U$ given by \eqref{eq:ww_identity}: $U_1,U_2$ are uncorrelated with each other and with the diagonal sums, which have correlation $-1/(n-1)$ for even $n$ and $0$ for odd $n$.
Since any two arrangements differ by a product of transpositions, the differences of $(U_1,\dots,U_4)$ generate the same lattice as the increments of single transpositions; by a Smith normal form computation ($n\le8$) this lattice is $\Z^4$.
A Gaussian local limit would therefore give the heuristic estimate $|L_n|\approx N!\,(2\pi)^{-2}\det(\Sigma_U)^{-1/2}$.
This estimate is $13.6$ for $n=3$ ($|L_3|=8$) and $1.82\times10^{7}$ for $n=4$, so $|L_4|$ is $0.80$ times the estimate; for the two-condition classes the ratios at $n=3,4$ are $0.59,0.91$ (first moments) and $1.10,0.96$ (diagonals).
For $n=5,6$ Monte Carlo ($10^{10}$ and $2\times10^{10}$ samples, $328$ and $62$ members of $L_n$) gives $|L_5|\approx5.1\times10^{17}$ ($\pm6\%$) and $|L_6|\approx1.2\times10^{33}$ ($\pm13\%$), ratios $0.76$ and $0.78$ to the estimate.
Since $\det(\Sigma_U)^{1/2}$ grows like $n^{13}$, the heuristic suggests $|L_n|/N!\asymp n^{-13}$.
Neither statement is proved here; local limit theorems for permutation statistics of this kind are studied in \cite{kuperberg2017probabilistic,eberhard2019additive}, and related counts of matrices with prescribed margins in \cite{canfield2010asymptotic}.

\subsection{Single swaps}
\label{sec:swap-gaps}

\begin{remark}[Single-swap formula]
\label{rem:swap}
Let $S^{ab}$ be obtained from $S$ by exchanging the entries of cells $a\ne b$, let $d=s_b-s_a$, let $\lambda(a,b)$ be the number of lines containing exactly one of $a,b$, and $u_S(c)=\sum_{L\ni c}\delta_L(S)$.
Expanding \eqref{eq:Efull_devs},
\[
N^{2}\bigl(\Efull(S^{ab})-\Efull(S)\bigr)=d^{2}\lambda(a,b)+2d\bigl(u_S(a)-u_S(b)\bigr),
\]
the incremental evaluation used in local search \cite{pinn1998number}.
Here $\lambda(a,b)=2[i\ne i']+2[j\ne j']+[\text{exactly one of }a,b\text{ lies on }D_{\mathrm m}]+[\text{exactly one of }a,b\text{ lies on }D_{\mathrm a}]\in\{2,\dots,6\}$ for $a=(i,j)$, $b=(i',j')$.
For a magic square $S$, $N^2\Efull(S^{ab})=d^2\lambda(a,b)\ge2$, so no two magic squares differ by a transposition, and the increase of $N^2\Efull$ under a transposition lies in the set $\{d^2\lambda:1\le d\le N-1,\ 2\le\lambda\le6\}$.
The smallest increase $g_{\min}(S)=\min_{a\ne b}d^2\lambda(a,b)$ satisfies $2\le g_{\min}(S)\le6$: the value $2$ occurs if and only if two consecutive values lie in a common row or column and off both diagonals, and consecutive values give at most $6$.
\end{remark}

Call $S$ a local minimum with respect to transpositions if no transposition lowers the energy.
For $n=3$ the local minima of $\Efull$ are the eight magic squares and eight non-magic arrangements with $N^2\Efull=2$, all strict; $\Efour$ has $344$ local minima, of which $336$ are non-magic, and only $72$ of them ($64$ non-magic) are strict.

\section{Open questions}
\label{sec:discussion}

The main results are the lower bounds $k^*(n)\ge(\sqrt2-o(1))\sqrt n$, and $k^*(n)\ge(\sqrt6-o(1))\sqrt n$ for $4\mid n$, for the permutation threshold, which together with Schur's bound give $(\sqrt2-o(1))\sqrt n\le k^*(n)\le2\sqrt2\,\sqrt{n\log n}$; the exact values for $n\le40$ (computer-assisted when $18\le n\le40$); and exact second moments of the line-sum energies of a random arrangement.
The following questions remain.

\begin{enumerate}\setlength\itemsep{2pt}
\item \emph{The order of $k^*(n)$.} Is $k^*(n)$ of order $\sqrt n$ or of order $\sqrt{n\log n}$?
The question has two parts.
The upper bound uses only that $\gamma$ is a nonzero integer vector with $\|\gamma\|_1\le n^4/4$, and among the vectors satisfying \eqref{eq:col_feasible} there are, for large $n$, vectors vanishing to order $(\sqrt{24}-o(1))\sqrt n$ at $1$ (\Cref{sec:kstar_upper}); whether there are vectors of larger order, up to $\sqrt{n\log n}$, is the known logarithmic gap between the bounds for the least height of an integer polynomial with a zero of high order at $1$ \cite{mignotte1982,bombieri1987low,amoroso1995}.
The part specific to arrangements is realisability: which vectors satisfying \eqref{eq:col_feasible} are column-deviation vectors.
Our constructions realise vectors with a zero of order $(\sqrt2-o(1))\sqrt n$ at $1$, and $(\sqrt6-o(1))\sqrt n$ for $4\mid n$; \Cref{conj:lattice} asserts that realisability is no restriction at the threshold, so that $k^*(n)=U(n)$ and the question reduces to the lattice problem.
\item \emph{The lattice threshold.} Prove \Cref{conj:lattice}, or find $n$ with $k^*(n)<U(n)$.  It holds for $n\le40$; for $41\le n\le46$ witnesses prove $k^*(n)\ge U(n)$, and equality holds if the uncertified values of $U(n)$ are correct.
\item \emph{The size of $L_n$.} Is $|L_n|\sim N!\,(2\pi)^{-2}\det(\Sigma_U)^{-1/2}\,c_n$ with $c_n$ bounded away from $0$ and $\infty$, i.e.\ does a local limit theorem hold for the four statistics $(U_1,\dots,U_4)$ of \Cref{sec:lowmode_n4}?  The ratio $c_n$ is $0.59$, $0.80$ for $n=3,4$ and about $0.76$, $0.78$ for $n=5,6$.
\item \emph{Other families.} The orthogonality formulation applies to pandiagonal and Franklin squares \cite{loly2007franklin}, with $W$ replaced by the span of the relevant line indicators.  The corresponding thresholds and random energies have not been worked out.
\end{enumerate}

\section*{Acknowledgments}
\label{sec:ai_statement}

Large language models (Anthropic's Claude and Google's Gemini) were used to assist with mathematical exploration, software implementation, and manuscript editing; all proofs and exact computational certificates have been verified by the author, who takes full responsibility for the content.

\bibliographystyle{plain}
\bibliography{references}

\appendix
\section{Computational methods}
\label{sec:appendix}

All computations were run on an Apple M4 Max under macOS with Python~3.9 (NumPy, SciPy, SymPy) and C compiled with Apple clang; uniformly random arrangements were generated by Fisher--Yates shuffles.
The scripts and data are in \texttt{anc/}, with a \texttt{README} listing the commands that regenerate every table and figure.

\paragraph{The permutation threshold.}
\texttt{anc/kstar\_verify.py} checks in exact integer arithmetic that each witness arrangement ($5\le n\le40$) is a permutation of $1,\dots,N$ with all row sums $M(n)$, and computes the order of vanishing of $P_\gamma$ at $1$ from the binomial moments $\sum_j\binom jr\gamma_j$.
For the upper bounds it reduces the basis $\{t^i(t-1)^{k+1}\}$ of $\Lambda_{n,k}$ by LLL (exact integer arithmetic) and enumerates all lattice vectors with $\|\gamma\|_2^2\le n^5(n^2-1)/12$ by the Fincke--Pohst method with exact rational Gram--Schmidt coefficients, testing \eqref{eq:col_feasible} for every $p$.
Since $\Lambda_{n,k+1}\subset\Lambda_{n,k}$, the existence of a nonzero vector satisfying \eqref{eq:col_feasible} is monotone in $k$, and the least $k$ for which $\Lambda_{n,k}$ contains no such vector is the upper bound.
A second implementation, written independently (\texttt{anc/kstar\_enum\_independent.py}, with its own exact LLL reduction and enumeration), reproduces the upper bounds for $14\le n\le40$ and writes the certificate \texttt{anc/kstar\_certificates.json}: for each $n$ the reduced basis of $\Lambda_{n,k^*(n)}$, the exact squared Gram--Schmidt norms, and every nonzero vector of the ball with a violated instance $(p,J)$ of \eqref{eq:col_feasible}.
The short script \texttt{anc/kstar\_check\_certificates.py} checks in exact arithmetic that each basis lies in $\Lambda_{n,k^*(n)}$ and has the same Gram determinant as $\{t^i(t-1)^{k^*(n)+1}\}$, hence generates it; recomputes the Gram--Schmidt norms (when all of them exceed the radius, the ball contains no nonzero lattice vector); otherwise enumerates exactly, from the certified basis, every nonzero lattice vector in the ball and checks that these are precisely the listed vectors (this happens for $n\in\{20,22,24,26,34,36,38\}$); and checks each violated instance.
For $n\le33$ the upper bounds were also reproduced by floating-point integer programming (\texttt{anc/kstar\_crosscheck/}).
The witnesses were found with \texttt{anc/realise.py} and \texttt{anc/anneal2.c}, starting from the squares of \Cref{lem:twin_squares} (\texttt{anc/constructions.py}; an independent implementation of the construction is \texttt{anc/twin\_squares.py}).
\texttt{anc/kstar\_subset\_sums.py} checks \Cref{prop:subset}(i) for $h\le100$ and the counting in the proof of (ii) for $h\le60$, and evaluates \eqref{eq:box} with the exact $|F_q|$; \texttt{anc/kstar\_beyond40.py} reruns or checks the tests of \Cref{conj:lattice} for $41\le n\le46$, which were run under Linux.

\paragraph{Variances.}
\texttt{anc/variance\_proof.py} derives \eqref{eq:var_identity} symbolically by M\"obius inversion over the $15$ set partitions of $\{1,2,3,4\}$, checks it by enumerating all permutations for random matrices with $N\le7$, computes the invariants \eqref{eq:inv_low}--\eqref{eq:inv_full} symbolically and checks them against the explicit matrices for $3\le n\le25$, and compares the result with the enumeration of all $9!$ arrangements for $n=3$.
Monte Carlo with $10^8$ arrangements per order ($3\le n\le8$) agrees with the exact means and standard deviations to within $5\times10^{-4}$ relative error (\texttt{anc/mc\_results.json}).
The densities in \Cref{fig:random_energy} were computed by numerical inversion of the characteristic function \cite{imhof1961}.

\paragraph{Meet-in-the-middle enumeration of $L_4$.}
The $16$ cells are split into the top two rows $A$ and the bottom two rows $B$.
For each of the $\binom{16}{8}=12{,}870$ value sets $V$ and each of the $8!$ bijections $V\to A$ the partial vector $(U_1,\dots,U_4)|_A$ is packed into an integer key; for the complementary set and each bijection $V^c\to B$ the key of the target minus the partial vector is formed; the two key lists are sorted and merged, and the sum over matching keys of the products of multiplicities is $|L_4|$.
This touches about $1.04\times10^9$ partial vectors instead of $16!\approx2.1\times10^{13}$ arrangements (\texttt{anc/lowmode\_enum.c}).
The NumPy programs \texttt{anc/lowmode\_recount.py} and \texttt{anc/table4\_recount.py} use the same split with a different packing of the keys and tag each half by its number of rows (and the sum of a chosen column) equal to $34$; together with a direct enumeration of the members of $L_4$ whose four rows all sum to $34$, they recount $|L_4|$, the first-moment class and every entry of \Cref{tab:L4}.

\texttt{anc/verify\_numbers.py}, \texttt{anc/n3\_lowmode\_proof.py} and \texttt{anc/algorithms.py} recompute the remaining small-order counts and local-minimum statistics in exact arithmetic.

\section{Squares with many complementary rows}
\label{sec:app_twin}

\begin{proof}[Proof of \Cref{lem:twin_squares}(1)]
Give the value $v$ the weight $|2v-(N+1)|$, an odd number in $\{1,3,\dots,N-1\}$; each weight $w$ belongs to exactly the two complementary values $(N+1\pm w)/2$.
Index the weights by $\iota=(w-1)/2\in\{0,\dots,2h^2-1\}$, $w_\iota=2\iota+1$.
It suffices to partition the indices into $h$ sets $R_0,\dots,R_{h-1}$ of size $n$ and choose signs $\sigma_\iota\in\{\pm1\}$ with $\sum_{\iota\in R_r}\sigma_\iota w_\iota=0$ for every $r$: then row $r$ receives the values $(N+1+\sigma_\iota w_\iota)/2$, $\iota\in R_r$, in any order, and row $h+r$ their complements in the same columns.
Every value occurs once, row $r$ sums to $n\frac{N+1}2=M(n)$, row $h+r$ to $n(N+1)-M(n)=M(n)$, and every column to $h(N+1)=M(n)$.
A \emph{domino} $\{\iota,\iota+1\}$ (for odd $h$, any two consecutive unused indices; they need not be of the form $\{2t,2t+1\}$) with signs $(+,-)$ or $(-,+)$ contributes $\mp2$.
If $h$ is even, give each row $h$ of the dominoes $\{2t,2t+1\}$ with alternating sign patterns.
If $h\ge7$ is odd, give row $r$ the four indices $4r,\dots,4r+3$ with signs $(+,+,+,-)$, contributing $(8r+1)+(8r+3)+(8r+5)-(8r+7)=16r+2$; the two indices $\alpha_r=5h-1-r$ and $\beta_r=5h+7r$ with signs $(+,-)$, contributing $2(\alpha_r-\beta_r)=-16r-2$; and $h-3$ dominoes with alternating sign patterns.
The quadruples fill $[0,4h-1]$ and the $\alpha_r$ fill $[4h,5h-1]$; consecutive $\beta_r$ leave gaps of exactly $6$ indices, and the remaining interval $[12h-6,\,2h^2-1]$ has even length $2h^2-12h+6\ge0$ because $h\ge7$.
So the unused indices form $3(h-1)+(h^2-6h+3)=h(h-3)$ dominoes, as required.
\end{proof}

\begin{proof}[Proof of \Cref{lem:twin_squares}(2)]
Start with a $7\times7$ magic square and border it successively to the orders $9,11,\dots,n$ by the following step, which turns a semi-magic square $T$ of order $n-2$ into one of order $n=2h+1$ ($h\ge4$) whose top and bottom rows are complementary on the inner columns.
Let $N=n^2$, $c=(N+1)/2$, $A=4h$, $B=4h-2$.
The inner block is $T+(2n-2)$, which uses the values $2n-1,\dots,N-2n+2$; each inner row and column then sums to $M(n-2)+(n-2)(2n-2)=(n-2)(N+1)/2$.
The border uses the $4h=2n-2$ complementary pairs $\{p,N+1-p\}$, $1\le p\le4h$.
Put $s_{00}=N+1-A$, $s_{n-1,n-1}=A$, $s_{0,n-1}=B$, $s_{n-1,0}=N+1-B$.
Let $\mathcal P=[2h,4h]\setminus\{A,B\}$ and $\mathcal Q=[1,2h-1]$, both of size $2h-1$, so $\sum\mathcal P-\sum\mathcal Q=4h^2-4h+2$.
Choose an $h$-subset $T_+$ of $[2h,4h-3]\subset\mathcal P$ and an $(h-1)$-subset $T_-$ of $\mathcal Q$ with $\sum T_+-\sum T_-=c-A+B=2h^2+2h-1$.
This is possible: the subsets of a given size of an interval of integers have as sums all integers between the smallest and the largest such sum (raise one element by $1$ at a time), so $\sum T_+-\sum T_-$ takes every integer in $[h^2+h,\,3h^2-2h]$, which contains $2h^2+2h-1$ for $h\ge4$.
The inner top row consists of the values $N+1-p$ ($p\in T_+$) and $p$ ($p\in T_-$); the inner bottom row of their complements in the same columns; the inner left column of $N+1-p$ ($p\in\mathcal P\setminus T_+$) and $p$ ($p\in\mathcal Q\setminus T_-$); and the inner right column of their complements in the same rows.
The top row sums to $(h+1)(N+1)-A+B-\sum T_++\sum T_-=(2h+1)c=M(n)$, and the left column to $(h+1)(N+1)-A-B-\sum(\mathcal P\setminus T_+)+\sum(\mathcal Q\setminus T_-)=M(n)$, because $\sum(\mathcal P\setminus T_+)-\sum(\mathcal Q\setminus T_-)=4h^2-4h+2-(2h^2+2h-1)=c-A-B$.
The bottom row and right column then sum to $n(N+1)-M(n)=M(n)$, each inner row and column to $N+1+(n-2)(N+1)/2=M(n)$, and every value $1,\dots,N$ occurs once.
Bordering shifts the inner square by a constant $\omega$, and $(u+\omega)+(u'+\omega)=N+1$ whenever $u+u'$ equals the previous $N+1$, so earlier complementary relations persist; this gives the complementary rows $\ell,n-1-\ell$ on the columns $\ell+1,\dots,n-2-\ell$ for $0\le\ell\le(n-9)/2$.
\end{proof}

\section{Derivation of the variance formulas}
\label{sec:app_variance}

\paragraph{Joint moments.}
Let $b$ be a vector with $\sum_vb_v=0$, $p_r=\sum_vb_v^r$, and $z$ a uniformly random permutation of $b$.
For distinct cells, $\E[z_a^4]=p_4/N$, and with $(N)_k=N(N-1)\cdots(N-k+1)$,
\[
\E[z_a^3z_b]=\frac{-p_4}{(N)_2},\quad \E[z_a^2z_b^2]=\frac{p_2^2-p_4}{(N)_2},\quad \E[z_a^2z_bz_c]=\frac{2p_4-p_2^2}{(N)_3},\quad \E[z_az_bz_cz_d]=\frac{3p_2^2-6p_4}{(N)_4},
\]
by M\"obius inversion of the power sums over set partitions.
Write $m_\nu$ for $\E[z_az_bz_cz_d]$ when the equality pattern of $(a,b,c,d)$ is exactly the partition $\nu$ of the positions $\{1,2,3,4\}$, and $m_\lambda=m_\nu$ for any $\nu$ of block type $\lambda$ (e.g.\ $m_{211}$ for $12|3|4$); the displayed moments are $m_4$, $m_{31}$, $m_{22}$, $m_{211}$ and $m_{1111}$.

\paragraph{Evaluation of the M\"obius coefficients in \Cref{prop:var_identity}.}
From the M\"obius inversion on $\Pi_4$ in the proof of \Cref{prop:var_identity}, the coefficients of the four surviving singleton-free partitions $12|34$, $13|24$, $14|23$ and $1234$ are
\[
\begin{aligned}\theta(12|34)=\theta(13|24)=\theta(14|23)&=m_{22}-2m_{211}+m_{1111},\\ \theta(1234)&=m_4-4m_{31}-3m_{22}+12m_{211}-6m_{1111}.\end{aligned}
\]
For the centred values $1,\dots,N$, $p_2=N(N^2-1)/12$ and $p_4=N(N^2-1)(3N^2-7)/240$, so these coefficients simplify to $N(N^2-1)(5N+6)/720$ and $-N^2(N+1)^2/120$; subtracting $\bigl(\frac{N(N+1)}{12}\tr A\bigr)^2$ gives \eqref{eq:var_identity}.

\paragraph{The invariants.}
With $\Gamma_{\ell\ell'}=\langle\tilde\varphi_\ell,\tilde\varphi_{\ell'}\rangle$ and $h_c=A_{cc}=\sum_\ell a_{\ell,c}^2$,
\[
\tr A=\frac1N\tr\Gamma,\qquad\tr A^2=\frac1{N^2}\sum_{\ell,\ell'}\Gamma_{\ell\ell'}^2,\qquad\sum_aA_{aa}^2=\sum_ch_c^2 .
\]
For a line, $(\mathbf 1_L(c)-\frac1n)^2=\frac1{n^2}+(1-\frac2n)\mathbf 1_L(c)$.
Hence $N^2h_c=x_c^2+y_c^2+\frac2{n^2}+(1-\frac2n)(d_{\mathrm m}+d_{\mathrm a})(c)$ for $\Efour$ and $N^2h_c=\frac{2n+2}{n^2}+(1-\frac2n)(2+d_{\mathrm m}(c)+d_{\mathrm a}(c))$ for $\Efull$, where $d_{\mathrm m},d_{\mathrm a}$ are the diagonal indicators.
The entries of $\Gamma$ are given in the proof of \Cref{prop:random_energy_moments}.
The sums needed are $\sum_cx_c^2=\frac{n^2(n^2-1)}{12}$, $\sum_cx_c^4=\frac{n^2(n^2-1)(3n^2-7)}{240}$, $\sum_cx_c^2y_c^2=\bigl(\frac{n(n^2-1)}{12}\bigr)^2$, $\sum_{c\in D}x_c^2=\sum_{c\in D}y_c^2=\frac{n(n^2-1)}{12}$ for either diagonal $D$, $\sum_c(d_{\mathrm m}+d_{\mathrm a})=2n$ and $\sum_c(d_{\mathrm m}+d_{\mathrm a})^2=2n+2\,|D_{\mathrm m}\cap D_{\mathrm a}|$, where $|D_{\mathrm m}\cap D_{\mathrm a}|$ is $1$ for odd and $0$ for even $n$.
Collecting terms gives the following invariants; where two values are given, the first is for odd and the second for even $n$.
For $\Efour$:
\begin{equation}
\begin{aligned}
\tr A&=\frac{(n-1)(n^3+n^2+12)}{6n^4},\\
\tr A^2&=\frac{(n-1)^2(n^6+2n^5+n^4+144)}{72n^8},\qquad\frac{n^8-2n^6+n^4+144n^2-288n+288}{72n^8},\\
\sum_aA_{aa}^2&=\frac{(n-1)(7n^7+7n^6-13n^5+107n^4+240n^2-720n+720)}{180n^{10}},\\
&\qquad\frac{7n^8-20n^6+120n^5-107n^4+240n^3-1320n^2+2880n-2160}{180n^{10}}.
\end{aligned}
\label{eq:inv_low}
\end{equation}
For $\Efull$:
\begin{equation}
\begin{aligned}
\tr A&=\frac{2(n^2-1)}{n^4},\qquad
\tr A^2=\frac{2(n-1)(n^2+n-1)}{n^8},\ \ \frac{2(n^3-2n+2)}{n^8},\\
\sum_aA_{aa}^2&=\frac{2(n-1)(2n-1)(n^2+2n-2)}{n^{10}},\ \ \frac{2(2n^4+n^3-10n^2+12n-6)}{n^{10}}.
\end{aligned}
\label{eq:inv_full}
\end{equation}

\section{Witnesses for \texorpdfstring{$k^*(n)=n-1$, $5\le n\le13$}{k*(n)=n-1, 5<=n<=13}}
\label{sec:app_witnesses}

The arrangement
\[
\begin{pmatrix}1&22&11&24&7\\23&3&12&2&25\\18&6&19&17&5\\4&21&14&10&16\\20&9&15&8&13\end{pmatrix}
\]
has all row sums $M(5)=65$ and column sums $66,61,71,61,66$, i.e.\ $\gamma=(1,-4,6,-4,1)$ and $P_\gamma=(1-t)^4$; it satisfies the degree-$3$ conditions without being semi-magic.
The arrangements below have, in the same way, all row sums $M(n)$ and $\gamma_j=(-1)^j\binom{n-1}j$, so $P_\gamma=(1-t)^{n-1}$, and prove $k^*(n)\ge n-1$ for $6\le n\le13$.
They are also in \texttt{anc/data\_witnesses\_variances.json}; the witnesses for $14\le n\le40$ are in \texttt{anc/kstar\_witnesses.json}.

\medskip
{\scriptsize\setlength{\arraycolsep}{1.25pt}\renewcommand{\arraystretch}{0.95}
\noindent\hfill $n=6$\quad$\left(\begin{array}{@{}rrrrrr@{}}30&26&1&17&2&35\\15&3&23&33&28&9\\20&10&34&24&11&12\\14&22&21&4&19&31\\25&32&6&5&27&16\\8&13&36&18&29&7\end{array}\right)$\hfill $n=7$\quad$\left(\begin{array}{@{}rrrrrrr@{}}43&8&28&17&29&14&36\\30&7&12&33&39&35&19\\13&4&48&16&42&32&20\\22&44&10&9&23&49&18\\1&45&31&38&46&3&11\\41&27&37&2&6&15&47\\26&34&24&40&5&21&25\end{array}\right)$\hfill\null\par\medskip
\noindent\hfill $n=8$\quad$\left(\begin{array}{@{}rrrrrrrr@{}}46&7&49&19&34&2&55&48\\8&64&62&4&3&1&61&57\\54&14&47&26&29&51&28&11\\38&27&42&24&43&37&44&5\\16&53&9&40&35&39&18&50\\52&15&6&59&63&32&21&12\\22&60&10&20&58&36&23&31\\25&13&56&33&30&41&17&45\end{array}\right)$\hfill $n=9$\quad$\left(\begin{array}{@{}rrrrrrrrr@{}}39&14&20&55&65&27&32&41&76\\33&72&71&4&26&50&2&58&53\\5&28&46&69&68&43&62&1&47\\61&13&74&52&10&9&73&56&21\\38&77&51&11&70&36&59&12&15\\19&8&24&3&79&18&80&57&81\\78&40&22&54&44&25&6&63&37\\48&64&60&34&42&30&67&7&17\\49&45&29&31&35&75&16&66&23\end{array}\right)$\hfill\null\par\medskip
\noindent\hfill $n=10$\quad$\left(\begin{array}{@{}rrrrrrrrrr@{}}88&18&48&77&39&93&9&85&14&34\\25&44&82&52&80&24&89&2&70&37\\38&46&63&75&73&36&27&31&54&62\\26&81&90&45&10&53&64&5&57&74\\97&83&7&22&71&6&72&84&12&51\\19&35&66&20&65&47&40&28&91&94\\50&61&76&3&41&79&92&33&55&15\\16&4&87&8&99&17&67&69&60&78\\98&56&21&96&58&13&43&32&59&29\\49&68&1&23&95&11&86&100&42&30\end{array}\right)$\hfill $n=11$\quad$\left(\begin{array}{@{}rrrrrrrrrrr@{}}108&47&96&27&101&8&31&32&98&67&56\\97&50&73&57&106&52&116&29&1&4&86\\40&109&78&12&100&70&117&68&15&16&46\\115&17&37&105&110&21&45&91&76&36&18\\59&71&103&88&6&42&113&22&51&34&82\\120&20&60&14&118&72&92&13&55&49&58\\10&64&35&114&44&89&75&33&93&66&48\\26&112&63&24&107&2&81&5&94&62&95\\9&54&80&25&77&7&19&69&111&121&99\\65&79&30&11&28&53&102&104&39&119&41\\23&38&61&74&84&3&90&85&83&87&43\end{array}\right)$\hfill\null\par\medskip
\noindent\hfill $n=12$\quad$\left(\begin{array}{@{}rrrrrrrrrrrr@{}}65&101&75&47&115&52&138&42&133&33&16&53\\79&98&24&13&122&1&144&23&84&18&121&143\\55&8&64&43&116&22&129&92&139&45&60&97\\9&15&111&62&68&76&136&61&128&102&14&88\\10&86&77&72&93&41&58&31&19&118&131&134\\100&67&57&3&37&11&135&40&117&105&113&85\\90&17&112&63&137&12&106&30&130&108&36&29\\142&141&103&74&104&44&78&28&34&21&99&2\\80&20&96&94&126&50&123&87&4&119&32&39\\140&69&120&81&51&46&71&6&56&7&132&91\\66&110&48&83&124&26&89&95&82&25&73&49\\35&127&38&70&107&27&125&5&109&114&54&59\end{array}\right)$\hfill\null\par\medskip
\noindent\hfill $n=13$\quad$\left(\begin{array}{@{}rrrrrrrrrrrrr@{}}67&120&59&47&117&17&123&8&94&111&167&129&46\\57&4&64&105&124&56&156&41&151&54&96&69&128\\148&10&138&24&130&6&163&12&150&55&127&40&102\\122&65&109&21&147&19&159&26&136&42&80&86&93\\43&52&58&95&165&49&154&33&137&23&146&63&87\\107&99&110&81&60&31&168&1&149&61&39&114&85\\66&143&142&88&38&53&164&11&133&76&62&78&51\\157&118&74&104&119&18&162&7&84&92&112&22&36\\20&161&75&73&116&15&169&34&106&27&100&70&139\\50&77&90&48&152&3&166&16&140&98&68&126&71\\89&82&144&79&145&9&158&13&135&30&101&5&115\\72&125&83&29&153&35&132&14&141&113&45&131&32\\108&37&25&91&134&2&155&97&44&103&28&160&121\end{array}\right)$\hfill\null\par\medskip
}

\end{document}